\newcommand{\lref}{\labelcref}
\newtheorem{thm}{Theorem}[section]
\newtheorem{prop}[thm]{Proposition}
\newtheorem{corol}[thm]{Corollary}
\newtheorem{lem}[thm]{Lemma}
\newtheorem*{thm*}{Theorem}
\newtheorem*{prop*}{Proposition}
\theoremstyle{definition}
\newtheorem{defi}[thm]{Definition}
\newtheorem{ex}[thm]{Example}
\theoremstyle{remark}
\newtheorem{rmq}[thm]{Remark}
\newcommand{\ie}{i.e.\xspace}
\newcommand{\C}{\mathbb{C}}
\newcommand{\R}{\mathbb{R}}
\newcommand{\PP}{\mathbb{P}}
\newcommand{\Q}{\mathbb{Q}}
\newcommand{\N}{\mathbb{N}}
\newcommand{\D}{\mathbb{D}}
\newcommand{\F}{\mathcal{F}}
\DeclareMathOperator{\id}{id}
\DeclareMathOperator{\Div}{Div}
\DeclareMathOperator{\lcm}{lcm}
\DeclareMathOperator{\Num}{Num}
\DeclareMathOperator{\Hol}{Hol}
\DeclareMathOperator{\Sur}{Sur}
\DeclareMathOperator{\mult}{mult}
\DeclareMathOperator{\Aut}{Aut}
\newcommand{\bigslant}[2]{{\raisebox{.2em}{$#1$}\left/\raisebox{-.2em}{$#2$}\right.}}
\title[Finiteness results for hyperbolic orbifold pairs]{Finiteness results for hyperbolic orbifold pairs}
\author{Laurine Weibel}
\address{Univ Brest, CNRS UMR 6205,	Laboratoire de Mathematiques de Bretagne Atlantique\\ F-29200 Brest, France}
\email{\href{laurine.weibel@univ-brest.fr}{laurine.weibel@univ-brest.fr}}
\begin{document}

\begin{abstract}
    Noguchi proved that the set of dominant maps from a fixed variety to a fixed hyperbolic variety is finite. We extend this result to the setting of orbifold pairs, as introduced by Campana, under suitable assumptions. Certain compactness properties also allow us to prove that the set of orbifold pointed maps and the orbifold automorphism group are finite.
\end{abstract}

\maketitle

\tableofcontents

\section{Introduction}

In \cite{de_franchis_teorema_1913}, De Franchis proved a finiteness theorem for compact Riemann surfaces. Namely, for $X$ and $Y$ smooth compact Riemann surfaces with $Y$ of genus at least $2$, the set of surjective holomorphic maps $X\to Y$ is finite. Riemann surfaces with genus at least two caracterize both hyperbolic curves and curves of general type. On the one hand, considering Riemann surfaces of genus at least $2$ as hyperbolic curves, the result of De Franchis was generalized in higher dimension to hyperbolic varieties by Noguchi \cite{noguchi_meromorphic_1992}. He proved that, for $X$ and $Y$ compact irreducible complex spaces, if $Y$ is hyperbolic, then the set of dominant meromorphic maps from $X$ to $Y$ is finite. On the other hand, considering Riemann surfaces of genus at least $2$ as curves of general type, the result of De Franchis was generalized in higher dimension to varieties of general type by Kobayashi–Ochiai \cite{kobayashi_meromorphic_1975}. They proved that, for $X$ and $Y$ compact irreducible complex spaces, if $Y$ is of general type, then the set of dominant meromorphic maps from $X$ to $Y$ is finite.

The main purpose of this paper is to generalize some finiteness results pertaining to hyperbolic orbifold pairs introduced by Campana \cite{campana_orbifolds_2004,campana_orbifoldes_2011} in his program to study special varieties. He introduced the notion of orbifold base of a fibration $f:X\to Y$ considering a $\Q$-divisor on $Y$ defined by the multiple fibers of $f$, by means of a suitable ramification formula. 

An orbifold $(X,\Delta)$ is a pair consisting of $X$, a complex variety, with a $\Q$-divisor $\Delta$ on $X$ for which the decomposition in irreducible components is of the form 
    \[\Delta=\sum_i \left(1-\frac{1}{m_i}\right) \Delta_i,\]
    where $m_i\in\N\cup \{\infty\}$. 
    
    An orbifold $(X,\Delta)$ is said to be \emph{compact} (resp. \emph{projective})  if $X$ is compact (resp. projective) and if $\lfloor \Delta\rfloor$, the union of all components with infinite multiplicities, is empty.
    
    An orbifold $(X,\Delta)$ is said to be \emph{smooth} if $X$ is smooth and $\lceil \Delta\rceil$, the support of $\Delta$, is a simple normal crossing (s.n.c.) divisor.

An orbifold morphism $f:(X,\Delta_X)\to (Y,\Delta_Y)$ is a holomorphic map that satisfies some ramification conditions along $(Y,\Delta_Y)$ (\cref{def-orb-map}).

Some generalisations of De Franchis theorem were obtained in the context of orbifolds curves by Corlette–Simpson \cite{corlette_classification_2008}, Delzant \cite{delzant_trees_2008}
and Campana \cite{campana_fibres_2005}\footnote{Corlette–Simpson  \cite{corlette_classification_2008}, and Delzant \cite{delzant_trees_2008} proved results in the classical sense whereas Campana \cite{campana_fibres_2005} proved results in the non classical one (Campana's sense). In the classical sense, the ramification condition is a divisibility condition whereas in Campana's sense, the ramification condition is an inequality.}. In particular, Camapana established the following result: for $X$ a Riemann surface and $(Y,\Delta)$ a hyperbolic orbifold curve, there is only a finite number of surjective orbifold maps $X\to (Y,\Delta)$.

More recently, Bartsch–Javanpeykar \cite{bartsch_kobayashi-ochiais_2024} proved a generalization of the finiteness theorem of Kobayashi–Ochiai \cite{kobayashi_meromorphic_1975} for dominant rational maps in the setting of Campana’s orbifold maps.  Namely, for $X$ a variety and $(Y,\Delta)$ an orbifold pair of general type, there is only a finite number of dominant orbifold morphisms $X\to (Y,\Delta)$.

In this paper, we  prove several new finiteness results pertaining to hyperbolic orbifold pairs, under suitable assumptions, generalizing the results of De Franchis and Noguchi.

\begin{thm}
 Let $(X,\Delta_X)$ be a smooth projective orbifold and $(Y,\Delta_Y)$ be a hyperbolic smooth projective orbifold. If $(Y,\Delta_Y)$ is uniformizable or if $K_Y+\Delta_Y$ is pseudo-effective, then  the set of surjective orbifold maps from $X$ on $(Y,\Delta_Y)$ $\Sur((X,\Delta_X),(Y,\Delta_Y))$ is finite.
\end{thm}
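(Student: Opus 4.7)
The plan is to treat the two alternative hypotheses on $(Y,\Delta_Y)$ separately, reducing each to a previously known finiteness theorem.

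Suppose first that $(Y,\Delta_Y)$ is uniformizable. Then there exists a finite Galois cover $\pi\colon \tilde Y \to Y$, with Galois group $G$ of some order $d$, such that $\tilde Y$ is a smooth projective variety and the ramification of $\pi$ matches the multiplicities prescribed by $\Delta_Y$. Since $(Y,\Delta_Y)$ is orbifold hyperbolic, $\tilde Y$ is a compact Kobayashi hyperbolic manifold. Given $f\in \Sur((X,\Delta_X),(Y,\Delta_Y))$, I would form the normalization $X_f$ of the fibered product $X\times_Y \tilde Y$ and let $\tilde f\colon X_f \to \tilde Y$ denote the induced surjective holomorphic map. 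The orbifold ramification inequality satisfied by $f$ is precisely what controls the branching of the finite map $X_f \to X$: each $X_f$ has degree dividing $d$ over $X$, with branch locus contained in $\Supp \Delta_X$ and ramification compatible with the multiplicities $m_i$ of $\Delta_X$. Consequently the isomorphism classes of covers $X_f$ that arise form a finite family, as they are classified by conjugacy classes of homomorphisms from the (finitely generated) orbifold fundamental group of $(X,\Delta_X)$ to the finite group $G$. For each fixed cover $X' \to X$ in this family, Noguchi's theorem applied to the compact hyperbolic target $\tilde Y$ yields only finitely many surjective holomorphic maps $X'\to \tilde Y$, and the original $f$ is recovered from $\tilde f$ by quotienting by the $G$-action. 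Hence $\Sur((X,\Delta_X),(Y,\Delta_Y))$ is finite in this case.

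Suppose instead that $K_Y+\Delta_Y$ is pseudo-effective. The strategy here is to upgrade pseudo-effectivity to bigness using the hyperbolicity hypothesis, and then invoke the orbifold analogue of Kobayashi–Ochiai due to Bartsch–Javanpeykar \cite{bartsch_kobayashi-ochiais_2024}. Orbifold hyperbolicity forbids the existence of a positive-dimensional ``special'' quotient in the sense of Campana, so the core fibration of $(Y,\Delta_Y)$ must be birational to the identity; combined with pseudo-effectivity of $K_Y+\Delta_Y$, this forces $(Y,\Delta_Y)$ to be of general type. Bartsch–Javanpeykar's theorem then directly yields the finiteness of $\Sur((X,\Delta_X),(Y,\Delta_Y))$.

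The main obstacle in the first case is verifying that the covers $X_f \to X$, as $f$ ranges over $\Sur((X,\Delta_X),(Y,\Delta_Y))$, really form a finite family: this requires a careful exploitation of the orbifold ramification inequality, together with a finite-generation statement for the orbifold fundamental group of $(X,\Delta_X)$ and the functoriality of the pull-back of the uniformizing cover along orbifold morphisms. In the second case, the principal difficulty is the implication ``hyperbolic plus $K_Y+\Delta_Y$ pseudo-effective implies general type'', an orbifold manifestation of Lang's conjecture in a weak form whose proof should rely on Campana's core fibration together with structural results for pairs with pseudo-effective log-canonical bundle.
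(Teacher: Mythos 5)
Your proposal takes a genuinely different route from the paper, but both branches rest on statements that are either false in the setting the paper uses or open conjectures.

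In the uniformizable case, the key claim that each $X_f$ (the normalization of $X\times_Y\widetilde Y$) has ``branch locus contained in $\Supp\Delta_X$ and ramification compatible with the multiplicities of $\Delta_X$'' is not correct here. The paper works with Campana's notion of orbifold morphism (\cref{def-orb-map}), where the ramification condition $t_{E,D}\cdot m_X(E)\geq m_Y(D)$ is an \emph{inequality}, not a divisibility. Even when $E\not\subset\lceil\Delta_X\rceil$ (so $m_X(E)=1$), an orbifold map $f$ may satisfy $t_{E,D}\geq m_Y(D)$ without $m_Y(D)\mid t_{E,D}$, and then $X_f\to X$ ramifies over $E$. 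As $f$ varies, so does this extra branch divisor, so $X_f$ is not an orbifold cover of $(X,\Delta_X)$ and the $X_f$ are not parametrized by $\Hom(\pi_1^{\mathrm{orb}}(X,\Delta_X),G)$. Your argument would be sound under the classical (Corlette--Simpson/Delzant) divisibility interpretation, but not in Campana's sense. The paper's proof in the uniformizable case (\cref{zero-dim-unif}) does pass through the \'etale cover $Z$, but only for a \emph{single} curve: it constructs, via the Jacobian of the evaluation map, a curve intersecting $K_Y+\Delta_Y$ negatively (\cref{construction-section-sur-F}, \cref{inters-neg-fort}, using the degeneracy lemma \cref{degeneracy-locus}), lifts that curve to $Z$, applies Miyaoka--Mori (\cref{KMM}), and pushes the resulting rational curve back down to get a rational orbifold curve in $(Y,\Delta_Y)$, contradicting hyperbolicity.

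In the pseudo-effective case, the step ``$(Y,\Delta_Y)$ hyperbolic and $K_Y+\Delta_Y$ pseudo-effective $\Rightarrow$ general type'' is a form of Lang's conjecture and is not a theorem, as you yourself concede in your final paragraph; already without orbifold boundary this implication is open. The paper does not establish general type: it uses the negative-intersection curve construction (\cref{inters-neg-fort}) plus the observation that, since every $f\in\mathcal F$ is surjective, the curves $ev_x(\mathcal F)$ form a covering family of $Y$, and then invokes Boucksom's numerical characterization of pseudo-effectivity (\cref{pseff-boucksom}) to reach a contradiction directly (\cref{zero-dim-pseff}). Combined with the compactness of $\Sur((X,\Delta_X),(Y,\Delta_Y))$ (\cref{Sur-compact}), this gives finiteness without ever invoking Kobayashi--Ochiai or Bartsch--Javanpeykar.
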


For example, this result can be applied to an orbifold $(Y,\Delta)$ whith nef or big canonical bundle, and so we obtain another proof of the result of Bartsch-Javanpeykar in the case of hyperbolic varieties of general type.

The proof of these theorems follows the general strategy of Noguchi. Therefore, one of the main points of the proof is a compactness result on the set of orbifold morphisms.

\begin{thm}
Let $(X,\Delta_X)$ be an orbifold pair with $\lfloor \Delta_X\rfloor=\emptyset$. Let $(Y,\Delta_Y)$ be a compact orbifold pair. If $(Y,\Delta_Y)$ is hyperbolic, then $\Hol((X,\Delta_X),(Y,\Delta_Y))$ is relatively compact in $\Hol(X,Y)$.
\end{thm}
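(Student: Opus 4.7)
The plan is to adapt the classical Brody--Noguchi argument, which shows that $\Hol(X,Y)$ is a normal family when $Y$ is compact and Kobayashi hyperbolic, to the orbifold setting. The key input will be the distance-decreasing property of orbifold morphisms for an appropriate orbifold Kobayashi pseudo-metric on $(Y,\Delta_Y)$.

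First I would record that every $f \in \Hol((X,\Delta_X),(Y,\Delta_Y))$ satisfies
\[ d_{(Y,\Delta_Y)}\bigl(f(p),f(q)\bigr) \;\leq\; d_{(X,\Delta_X)}(p,q) \qquad \forall\, p,q \in X, \]
where $d_{(\cdot,\cdot)}$ denotes the orbifold Kobayashi pseudo-metric. The assumption $\lfloor \Delta_X\rfloor=\emptyset$ ensures that $d_{(X,\Delta_X)}$ is finite on $X\times X$, so the bound has content. Next, combining the compactness of $Y$ with the hyperbolicity of $(Y,\Delta_Y)$, I would deduce, via an orbifold version of Brody's reparametrisation trick, that $d_{(Y,\Delta_Y)}$ is a genuine distance on $Y$ and is topologically equivalent to any fixed metric inducing the usual topology of $Y$.

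Once these ingredients are in place, equicontinuity of $\Hol((X,\Delta_X),(Y,\Delta_Y))$ with respect to a standard metric on $Y$ follows directly, and pointwise relative compactness is automatic since $Y$ is compact. The Ascoli--Arzel\`a theorem then produces, from any sequence $(f_n)$ of orbifold morphisms, a subsequence converging uniformly on compact subsets of $X$ to some continuous $f : X \to Y$. By the standard fact that a uniform-on-compacta limit of holomorphic maps into a complex space remains holomorphic, $f \in \Hol(X,Y)$, which is the desired relative compactness in $\Hol(X,Y)$.

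The main obstacle lies in the second step: verifying that the paper's notion of orbifold hyperbolicity really does imply, through a Brody-type reparametrisation that respects the ramification conditions, that the orbifold Kobayashi pseudo-metric on $(Y,\Delta_Y)$ generates the ambient topology on $Y$. Everything else (distance decreasing, Ascoli--Arzel\`a, holomorphicity of the limit) is then essentially automatic. Note that the conclusion concerns relative compactness inside $\Hol(X,Y)$ rather than $\Hol((X,\Delta_X),(Y,\Delta_Y))$, so one need not show that the limit $f$ is itself an orbifold morphism --- fortunately, since orbifold ramification conditions need not be preserved under uniform limits.
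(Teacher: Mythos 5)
Your outline matches the paper's own proof quite closely: use the distance-decreasing property of orbifold morphisms, show equicontinuity, apply Arzel\`a--Ascoli, and finish using that $\Hol(X,Y)$ is closed in $\mathcal{C}(X,Y)$. Two points deserve correction, though. First, there is no need for an ``orbifold Brody reparametrisation'' to conclude that $d_{(Y,\Delta_Y)}$ is a genuine distance: that is literally the definition of hyperbolicity used in the paper (Kobayashi-hyperbolicity), not something deduced from Brody-hyperbolicity; and the topological equivalence with the ambient topology on $Y$ is free once one knows the orbifold Kobayashi pseudodistance is continuous for the usual topology, because a continuous bijection from a compact space to a Hausdorff space is a homeomorphism. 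Second, and more importantly, finiteness of $d_{(X,\Delta_X)}$ on $X\times X$ is not enough to get equicontinuity of the family with respect to the standard topology on $X$: you need $d_{(X,\Delta_X)}$ to be \emph{continuous} on $X\times X$ for the usual topology, so that $d_{(X,\Delta_X)}$-balls are actual open neighbourhoods. This is exactly the role of the hypothesis $\lfloor\Delta_X\rfloor=\emptyset$, since (as the paper recalls from Campana--Winkelmann) the orbifold Kobayashi pseudodistance is continuous on $X\setminus\lfloor\Delta_X\rfloor$, which here is all of $X$. With continuity substituted for mere finiteness, your argument goes through as in the paper.
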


If we consider more specifically the surjective maps, the set is compact. 
\begin{corol}
    Let $(X,\Delta_X)$ be an orbifold pair with $\lfloor \Delta_X\rfloor=\emptyset$. Let $(Y,\Delta_Y)$ be a compact orbifold pair. If $(Y,\Delta_Y)$ is hyperbolic, then $\Sur((X,\Delta_X),(Y,\Delta_Y))$ is compact.
\end{corol}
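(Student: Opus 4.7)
The plan is to deduce the corollary from the preceding theorem by showing that $\Sur((X,\Delta_X),(Y,\Delta_Y))$ is closed in $\Hol(X,Y)$. Since, by that theorem, $\Hol((X,\Delta_X),(Y,\Delta_Y))$ is relatively compact in $\Hol(X,Y)$ for the compact-open topology, and $\Sur((X,\Delta_X),(Y,\Delta_Y))$ sits inside it, closedness of $\Sur((X,\Delta_X),(Y,\Delta_Y))$ in $\Hol(X,Y)$ would make it a closed subset of the compact set $\overline{\Hol((X,\Delta_X),(Y,\Delta_Y))}$, hence compact.

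To check closedness, I would take a sequence $(f_n)$ of surjective orbifold morphisms converging in the compact-open topology to some $f \in \Hol(X,Y)$ and verify that $f$ is itself a surjective orbifold morphism $(X,\Delta_X)\to(Y,\Delta_Y)$. For the orbifold condition, \cref{def-orb-map} requires lower bounds of the form $\ord_x(f_n^{\ast}D) \geq m$ for each component $D$ of $\lceil \Delta_Y\rceil$ and each relevant $x \in X$, where $m$ depends on the multiplicity of $D$ in $\Delta_Y$ and, when $x$ lies on a component of $\lceil\Delta_X\rceil$, also on its multiplicity in $\Delta_X$. Such lower bounds on vanishing orders pass to locally uniform limits of holomorphic maps by Hurwitz's theorem, so the ramification inequality survives for $f$. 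For surjectivity, given $y \in Y$ I would choose $x_n \in f_n^{-1}(y)$; when $X$ is compact a convergent subsequence $x_{n_k} \to x$ yields $f(x) = \lim f_{n_k}(x_{n_k}) = y$, whence $f(X) = Y$ (in the non-compact case one invokes a properness-type argument on the $x_n$ coming from the equicontinuity of the family and the hyperbolicity of $Y$).

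The main obstacle is the orbifold step: extending Hurwitz's theorem componentwise in the orbifold setting. Concretely, one must argue that every component of $f^{-1}(\lceil\Delta_Y\rceil)$ arises as a limit of components of the $f_n^{-1}(\lceil\Delta_Y\rceil)$, and that the multiplicities---both those of $\Delta_X$ and the ramification orders along $\Delta_Y$---combine so that the orbifold inequality (in whichever sense, classical or Campana) is preserved. Once this componentwise Hurwitz-type argument is in place, the corollary follows directly from the preceding theorem via the \emph{closed in relatively compact implies compact} principle.
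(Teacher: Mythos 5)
Your overall strategy is exactly the paper's: deduce compactness of $\Sur((X,\Delta_X),(Y,\Delta_Y))$ from the preceding relative-compactness theorem by showing it is closed in $\Hol(X,Y)$, and reduce closedness to checking that a locally uniform limit of surjective orbifold morphisms is again a surjective orbifold morphism. However, you explicitly leave open what you call the ``main obstacle'' --- a componentwise Hurwitz-type argument in the orbifold setting --- and this is precisely the step where your proposal has a genuine gap, because you do not actually supply it.

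The paper does not prove any such Hurwitz statement from scratch; it cites it. The needed result is \cref{morph-orbi-ferme} (Campana--Winkelmann, Proposition~7 in \cite{campana_brody_2009}), which the paper records immediately before the corollary: if a sequence of orbifold morphisms $f_n:(X,\Delta_X)\to(Y,\Delta_Y)$ converges locally uniformly to $f:X\to Y$, then either $f(X)\subset\lceil\Delta_Y\rceil$ or $f$ is an orbifold morphism. For surjectivity itself, the paper invokes Kobayashi's Corollary~(5.3.5) that the set of surjective holomorphic maps is closed in $\Hol(X,Y)$ (which covers the non-compact $X$ case without the ad hoc ``properness-type'' argument you gesture at). Once $f$ is surjective, the degenerate alternative $f(X)\subset\lceil\Delta_Y\rceil$ is excluded, and the dichotomy gives $f\in\Sur((X,\Delta_X),(Y,\Delta_Y))$. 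So the fix for your gap is not to reprove a Hurwitz theorem componentwise, but to cite the two closedness lemmas that the paper has already set up. Also note that the orbifold condition in \cref{def-orb-map} is a divisorial inequality $t_{E,D}\cdot m_X(E)\geq m_Y(D)$ along components $E\subset X$, not a pointwise bound $\operatorname{ord}_x(f_n^\ast D)\geq m$; your formulation conflates the curve case with the general one.
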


In addition to the previous theorem, these results of compactness allow to prove others finitness results for the set of orbifold automorphisms $\Aut(Y,\Delta)$ and for the set of orbifold pointed maps.

\begin{thm}
    Let $(Y,\Delta)$ be a smooth compact orbifold. If $(Y,\Delta)$ is hyperbolic, then $\Aut(Y,\Delta)$ is finite.
\end{thm}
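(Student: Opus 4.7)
The plan is to combine the compactness of $\Aut(Y,\Delta)$, deduced from the previous corollary, with a discreteness statement coming from the absence of non-trivial holomorphic flows on a hyperbolic orbifold.

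First I would observe that every orbifold automorphism is surjective, giving an inclusion $\Aut(Y,\Delta)\subset\Sur((Y,\Delta),(Y,\Delta))$. Applying the previous corollary with $(X,\Delta_X)=(Y,\Delta)$ (the hypothesis $\lfloor\Delta\rfloor=\emptyset$ being included in the compactness of the orbifold) shows that the right-hand side is compact. I would then check that $\Aut(Y,\Delta)$ is closed therein: if $(\varphi_n)\subset\Aut(Y,\Delta)$ converges to some $\varphi$, the inverses $(\varphi_n^{-1})$ also lie in $\Sur((Y,\Delta),(Y,\Delta))$, so up to extracting a subsequence they converge to some $\psi$; passing to the limit in $\varphi_n\circ\varphi_n^{-1}=\id=\varphi_n^{-1}\circ\varphi_n$ yields $\varphi\circ\psi=\psi\circ\varphi=\id$. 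Hence $\varphi$ is a biholomorphism whose inverse $\psi$ is itself an orbifold morphism, so $\varphi\in\Aut(Y,\Delta)$. Therefore $\Aut(Y,\Delta)$ is compact.

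Next, by Bochner--Montgomery the group $\Aut(Y)$ carries a natural complex Lie group structure, and $\Aut(Y,\Delta)$ is a closed subgroup of $\Aut(Y)$, hence itself a Lie subgroup. Assume for contradiction that $\dim\Aut(Y,\Delta)>0$. A non-trivial one-parameter subgroup $(\varphi_t)_{t\in\C}$ then exists, yielding a non-zero holomorphic vector field $v$ on $Y$ with $\varphi_t^*\Delta=\Delta$ for all $t$; by continuity and connectedness, every irreducible component $\Delta_i$ is individually preserved by the flow, so $v$ is tangent to $\lceil\Delta\rceil$. Since $v\not\equiv 0$ and $Y\setminus\Supp(\Delta)$ is dense, one can choose $p$ in this open set with $v_p\neq 0$; because the flow preserves $\Supp(\Delta)$, the orbit $f\colon\C\to Y$, $t\mapsto\varphi_t(p)$, is a non-constant holomorphic map whose image avoids $\Supp(\Delta)$. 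Such an $f$ is automatically an orbifold morphism $\C\to(Y,\Delta)$ (the ramification conditions over each $\Delta_i$ being vacuous), contradicting the hyperbolicity of $(Y,\Delta)$. Therefore $\Aut(Y,\Delta)$ is discrete, and combined with compactness it must be finite.

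The main obstacle I foresee is the closedness of $\Aut(Y,\Delta)$ inside $\Sur((Y,\Delta),(Y,\Delta))$: one must ensure that a uniform limit of orbifold isomorphisms remains an orbifold morphism in both directions, which hinges on the limit behavior of the ramification inequalities defining orbifold maps --- a property that should already have been established as a by-product of the compactness corollary. A secondary point is the justification that a holomorphic one-parameter group of orbifold automorphisms yields a vector field tangent to every component of $\lceil\Delta\rceil$; this follows from the fact that the action of the parameter on the finite set of irreducible components of $\Supp(\Delta)$ is continuous and fixes each component at $t=0$, hence for all small $t$ and then, by analytic continuation of the flow, for every $t$.
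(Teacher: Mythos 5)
Your proposal follows the same overall architecture as the paper (compactness of $\Sur$ from the corollary, closedness of $\Aut(Y,\Delta)$ therein, Lie group structure, and a contradiction with hyperbolicity via a positive-dimensional identity component). The closedness argument via inverses and subsequences is a perfectly acceptable substitute for the paper's direct citation of the limit proposition. The contradiction step is also essentially the paper's: you pick a point off the support of $\Delta$, flow it, and observe that the resulting entire curve is vacuously an orbifold morphism, instead of invoking the evaluation-morphism theorem --- a small and pleasant simplification.

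However, there is a genuine gap in the passage from a \emph{closed} subgroup of $\Aut(Y)$ to a one-parameter subgroup $(\varphi_t)_{t\in\C}$ parametrized holomorphically by $\C$. A closed subgroup of a complex Lie group is only a real Lie subgroup in general; it need not carry a complex structure (think of $\R\subset\C^\times$, or of $\R$ inside the translation group of a complex torus). A real one-parameter flow $(\varphi_t)_{t\in\R}$ does not by itself yield a \emph{holomorphic} entire curve $\C\to Y$, so the contradiction with hyperbolicity does not follow. The paper closes this gap by exploiting \cref{def-autom-orbi}: membership in $\Aut(Y,\Delta)$ is the condition of permuting the finitely many components $\Delta_i$ preserving multiplicities, which is an \emph{algebraic} (hence analytic) condition inside the algebraic group $\Aut(Y)$; consequently $\Aut(Y,\Delta)$ is an algebraic subgroup, in particular a complex Lie subgroup, and its identity component of positive dimension would be a complex torus, giving the non-constant holomorphic map $\C\to\Aut^0(Y,\Delta)$ that you need. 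You should either cite that lemma and the algebraicity of the defining conditions, or argue directly that the set-wise stabilizer of $\Supp(\Delta)$ is a complex analytic subgroup and extend the real flow to the complex one inside it.
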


\begin{thm}
    Let $(X,\Delta_X)$ be an orbifold pair with $\lfloor \Delta_X\rfloor=\emptyset$. Let $(Y,\Delta_Y)$ be a hyperbolic compact orbifold pair. Let $x$ be a point in $X$ and $y$ be a point in $Y\backslash \lceil\Delta_Y\rceil$.
Then the set of holomorphic pointed orbifold maps  $\Hol\left[((X,\Delta_X),x),\left((Y,\Delta_Y),y\right)\right]$ is finite.
\end{thm}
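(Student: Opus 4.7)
The plan is to show that the set $\Hol[((X,\Delta_X),x),((Y,\Delta_Y),y)]$ is compact in the compact-open topology of $\Hol(X,Y)$, then rule out accumulation points using the orbifold hyperbolicity of $(Y,\Delta_Y)$; the conclusion follows since a discrete compact set is finite.

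\textbf{Compactness.} Observe that
\[
\Hol\bigl[((X,\Delta_X),x),((Y,\Delta_Y),y)\bigr] = \Hol((X,\Delta_X),(Y,\Delta_Y)) \cap \{f \in \Hol(X,Y) : f(x)=y\}.
\]
By the preceding compactness theorem, the first factor is relatively compact in $\Hol(X,Y)$. Moreover, it is closed: ramification indices are upper semi-continuous under locally uniform limits of holomorphic maps, so the orbifold inequalities of the form $\mult_p(f^*D_i) \geq m_i$ along the components of $\Delta_Y$ pass to the limit. The second factor is closed by continuity of evaluation at $x$. Hence the pointed orbifold map set is compact.

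\textbf{Discreteness.} Suppose for contradiction that this compact set is infinite; then it has an accumulation point $f$, giving a sequence $f_n \to f$ with $f_n \neq f$. The hypothesis $y \notin \lceil \Delta_Y \rceil$ provides a neighborhood $V$ of $y$ on which the orbifold structure is trivial, so that classical Euclidean coordinates are available. For $n$ large, $f$ and $f_n$ both map a common neighborhood of $x$ into $V$, so in local coordinates at $y$ the nonzero differences $g_n := f_n - f$ vanish at $x$ and tend uniformly to zero on compact sets. I would then apply a Brody-type reparametrization to the rescaled sequence $g_n/\|g_n\|$ to extract in the limit a non-constant holomorphic map $\C \to Y$. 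Using the compactness of $(Y,\Delta_Y)$ and the closedness of the orbifold condition, one promotes this limit to a non-constant entire orbifold curve $\C \to (Y,\Delta_Y)$, contradicting the hyperbolicity of $(Y,\Delta_Y)$.

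\textbf{Main obstacle.} The most delicate step is the Brody-rescaling argument: the entire curve produced by the limiting process must be shown to be a genuine orbifold entire curve, i.e.\ to satisfy the ramification inequalities along $\lceil\Delta_Y\rceil$. Since the rescaling is localized near $y$, off the orbifold divisor, the limit is initially classical near the origin and only interacts with $\lceil\Delta_Y\rceil$ after global extension; transferring the orbifold condition from the sequence $(f_n)$ to the limit curve relies on the same upper semi-continuity of ramification indices established in the compactness step, applied now in the reparametrized direction.
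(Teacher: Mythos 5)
Your compactness step is essentially the paper's: the pointed set is $ev_x^{-1}(\{y\})\cap\Hol((X,\Delta_X),(Y,\Delta_Y))$, the first factor is closed, the second is relatively compact by the compactness theorem, and the hypothesis $y\notin\lceil\Delta_Y\rceil$ is precisely what lets one invoke \cref{morph-orbi-ferme} to see that a locally uniform limit of pointed orbifold maps is again one, since the limit sends $x$ to $y$ and so cannot have image inside $\lceil\Delta_Y\rceil$. So far this matches the proof in the paper.

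The discreteness step, however, has a genuine gap. A Brody reparametrization is driven by derivative blow-up, and here the opposite occurs: hyperbolicity of $(Y,\Delta_Y)$ makes the family equicontinuous, so $g_n:=f_n-f\to 0$ uniformly on compacta with uniformly bounded derivatives. Dividing by $\|g_n\|$ gives a bounded sequence of maps from a fixed small polydisc around $x$ into a coordinate chart of $\C^{\dim Y}$; any subsequential limit, even if non-constant, is a germ of a holomorphic map on that polydisc with values in the chart, not an entire curve $\C\to Y$, and there is no reparametrization to turn it into one, nor any way to read off an orbifold condition once the values have left $Y$. You are deploying Brody's mechanism, which detects failures of hyperbolicity, precisely in the hyperbolic regime where it has no traction. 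What the paper does instead is a rigidity argument without producing any curve: each irreducible component $H$ of the compact analytic set of pointed orbifold maps is a compact connected complex space, the evaluation $ev:X\times H\to Y$ is constant (equal to $y$) on $\{x\}\times H$, and \cite[Lemma~(5.3.1)]{kobayashi_hyperbolic_1998} then forces $ev(\{x'\}\times H)$ to be a single point for every $x'\in X$, so $H$ is one map; finitely many components give finiteness. If you wish to avoid that lemma, the correct analytic substitute is a Noguchi-style infinitesimal deformation argument producing a non-zero holomorphic section of $f^\ast TY$ vanishing at $x$, not a rescaling to an entire curve.
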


The second point of the proof is a rigidity result. 

\begin{prop}
 Let $(X,\Delta_X)$ be a projective orbifold. Let $(Y,\Delta_Y)$ be a smooth projective hyperbolic orbifold.
 If $(Y,\Delta)$ is uniformizable or if the orbifold canonical bundle $K_Y+\Delta$ is pseudo-effective, then the set of surjective holomorphic orbifold morphisms from $(X,\Delta_X)$ to $(Y,\Delta_Y)$ is zero-dimensional.
\end{prop}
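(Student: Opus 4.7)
The plan is to argue by contradiction. Assume that $\Sur((X,\Delta_X),(Y,\Delta_Y))$ admits an irreducible component $V$ of positive dimension. By the compactness result stated earlier in the paper, $V$ is compact, and hence one can find a non-constant holomorphic map $\phi \colon C \to V$ from a smooth projective curve $C$. This produces a non-trivial family $F \colon X \times C \to Y$ of surjective orbifold morphisms $f_t \colon (X,\Delta_X) \to (Y,\Delta_Y)$. The goal is then to contradict either of the two alternative assumptions, combined with the hyperbolicity of $(Y,\Delta_Y)$.

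Under the uniformizability hypothesis, let $p \colon \tilde{Y} \to Y$ be an orbifold cover realizing the uniformization, with $\tilde{Y}$ a smooth compact hyperbolic manifold. The ramification inequalities built into the definition of an orbifold morphism are tailored precisely so that each $f_t$ lifts through $p$ after a suitable ramified cover of $X$. One would then need to show that this lift can be arranged to vary holomorphically in $t$, producing a non-trivial family of surjective holomorphic maps to the manifold $\tilde{Y}$, and appeal to Noguchi's classical rigidity theorem for hyperbolic manifolds to obtain a contradiction.

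Under the pseudo-effectivity hypothesis on $K_Y+\Delta_Y$, the $t$-derivative of $F$ defines a non-zero global section of (a twist of) the pulled-back orbifold tangent sheaf, and hence a non-trivial ``orbifold vector field'' along each $f_t$. Since the determinant of the orbifold tangent sheaf is $-(K_Y+\Delta_Y)$, which is anti-pseudo-effective by hypothesis, the existence of such a section should be obstructed via a generic semi-positivity argument of Campana--P\u{a}un type applied to the pushforward along $f_t$, using the surjectivity of the maps to transfer the positivity of $K_Y+\Delta_Y$ back to $X$.

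The main obstacle will be in the uniformizable case: constructing a holomorphic lift of the entire family $F$ to the uniformization is subtle, since each fiber $f_t$ a priori requires its own ramified base change of $X$, and one must ensure coherence of these base changes as $t$ varies. This should follow from the local constancy of the monodromy around $\lceil\Delta_Y\rceil$ and the topological rigidity of orbifold coverings. The pseudo-effective case has its own delicate point, namely passing from a pseudo-effective (but not necessarily big) determinant to an actual vanishing statement, which is where the non-triviality of the family itself must be exploited to provide the extra positivity needed to close the argument.
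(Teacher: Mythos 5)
Your overall strategy (contradiction, reduce to a one-dimensional family by compactness, construct a deformation vector field) matches the paper's, but the two case-by-case executions diverge, and each of your two branches has a genuine gap.

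\textbf{Uniformizable case.} You claim that the orbifold ramification inequalities are ``tailored precisely so that each $f_t$ lifts through $p$ after a suitable ramified cover of $X$.'' This is false in Campana's (non-classical) setting, which is the one used here: the condition in \cref{def-orb-map} is $t_{E,D}\, m_X(E)\geq m_Y(D)$, an inequality and not a divisibility. Locally a map $w\mapsto w^t$ with $t\geq m$ does not lift through $u\mapsto u^m$ unless $m\mid t$, and no base change of $X$ fixes this in general; this is precisely the classical/non-classical dichotomy flagged in the paper's introduction in the footnote on Corlette--Simpson and Delzant versus Campana. The obstacle you identify (holomorphic variation of the lift in $t$) is therefore secondary: even a single $f_t$ need not lift. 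The paper avoids the issue entirely by never lifting the maps. Instead it lifts the curve $C=ev_x(\mathcal F)\subset Y$ to the cover $Z\to (Y,\Delta)$ (which is unobstructed since $\pi$ is a covering map of $Y$), uses $\pi^*(K_Y+\Delta)=K_Z$ and the negative-degree computation for $C$ (from \cref{inters-neg-fort}) to get $K_Z\cdot\pi^{-1}(C)<0$, and then applies Miyaoka--Mori to produce a rational curve in $Z$, whose image is an orbifold rational curve in $(Y,\Delta)$, contradicting hyperbolicity.

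\textbf{Pseudo-effective case.} The Campana--P\u{a}un generic semi-positivity proposal is not what the paper does, and as you yourself observe there is ``a delicate point'' in passing from pseudo-effectivity to a vanishing statement; that point is in fact not closed by your plan. The paper's route never seeks a cohomological vanishing. It constructs, from the Jacobian of the evaluation map, an explicit non-zero holomorphic section of $ev_x^*\bigl(\bigl(\bigwedge^{\dim Y}TY-\Delta\bigr)^{\otimes M}\bigr)$ over the one-dimensional family $\mathcal F$ (\cref{construction-section-sur-F}), giving $\deg_{ev_x(\mathcal F)}(K_Y+\Delta)\leq 0$; strengthens this to strict inequality via \cref{degeneracy-locus}, which shows that otherwise there would be a non-zero logarithmic vector field in $H^0(Y,TY(-\log\lceil\Delta\rceil))$, impossible for a hyperbolic pair (\cref{lemma-champs-vecteurs-log}); and then lets $x$ move, so that the curves $ev_x(\mathcal F)$ sweep out $Y$ (here surjectivity of the $f_t$ is used). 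A covering family of curves all meeting $K_Y+\Delta$ negatively contradicts pseudo-effectivity by the Boucksom--Demailly--P\u{a}un--Peternell duality (\cref{pseff-boucksom}). If you want to pursue your version, you would need to supply both the precise positivity input that produces vanishing from mere pseudo-effectivity and the structure that lets the ``extra positivity from the family'' be harvested; as written these are placeholders rather than a proof.
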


The original proof of Noguchi is based on a theorem proved by Miyaoka–Mori in the projective setting without boundary (see \cite{miyaoka_numerical_1986}). This approach works in the case of uniformizable pairs, otherwise it is a tricky point of the proof. 

In this document, we always assume all varieties to be complex varieties irreducible, reduced and normal.

\subsubsection*{Acknowledgements.} I would like to thank my Ph.D. advisors, Erwan Rousseau and Benoît Claudon, for their invaluable guidance and support throughout this work. I am also sincerely grateful to Ariyan Javanpeykar for the enlightening discussions during my research stay at Radboud University in Nijmegen, which contributed to the development of this project.

\section{Orbifolds}
\subsection{First definitions}
Let $X$ be a normal complex projective variety. First, we recall some notions about orbifolds introduced  by Campana in \cite{campana_orbifolds_2004}. These notions are also recalled in \cite{rousseau_hyperbolicity_2010}.

\begin{defi}
    An \emph{orbifold} $(X,\Delta)$ is a pair consisting of $X$ with a Weil $\Q$-divisor $\Delta$ on $X$ for which the decomposition in irreducible components is of the form 
    \[\Delta=\sum_i \left(1-\frac{1}{m_i}\right) \Delta_i,\]
    where $m_i\in\N\cup \{\infty\}$.
    An orbifold $(X,\Delta)$ is said to be \emph{compact} if $X$ is compact and if $\lfloor \Delta\rfloor$, the union of all components with infinite multiplicities, is empty.
    
    An orbifold $(X,\Delta)$ is said to be \emph{projective} if $X$ is projective and if $\lfloor \Delta\rfloor$, the union of all components with infinite multiplicities, is empty.
    
    An orbifold $(X,\Delta)$ is said to be \emph{smooth} if $X$ is smooth and $\lceil \Delta\rceil$, the support of $\Delta$, is a simple normal crossing (s.n.c.) divisor. This means that every component of $\lceil \Delta \rceil$ is smooth, and around any point of $X$, the divisor $\lceil \Delta \rceil$ can be locally described by an equation of the form $x_1 \dots x_d = 0$, for some $d \leq \dim X$.
\end{defi}

Following \cite{campana_brody_2009}, let us recall the definition of  orbifold morphisms from the unit disk to an orbifold.

\begin{defi}
    Let $(X,\Delta)$ be an orbifold with $\Delta=\sum_i\left(1-\frac{1}{m_i}\right)Z_i$,
    $\D = \{z \in \C\, |\, |z|<1\}$ the unit disk and $h$ a holomorphic map from $\D$ to $X$.
        $h$ is an \emph{orbifold morphism from $\D$ to $X$} if $h(\D) \not\subset \lceil \Delta\rceil$ and $\mult_x(h^\ast Z_i)\geq m_i$ for all $i$ and $x\in \D$ with $h(x)\in \lceil Z_i\rceil$. If $m_i=\infty$, we require $h(\D)\cap Z_i = \emptyset$.
\end{defi}

Also, we can define orbifold morphisms between orbifolds.

\begin{defi}\label{def-orb-map}
    Let $(X,\Delta_X)$ and $(Y,\Delta_Y)$ be orbifolds and $f:X\to Y$ a holomorphic map. $f$ is an \emph{orbifold morphism} from $(X,\Delta_X)$ to $(Y,\Delta_Y)$ if
    \begin{enumerate}
        \item $f(X)\not\subset \lceil \Delta_Y\rceil$, where $\lceil \Delta_Y\rceil$ denotes the support of $\Delta_Y$.
        \item for every irreducible divisors $D\subset Y$ and $E \subset X$ such that  $\linebreak f^\ast(D)=t_{E,D} E +R$, with $R$ an effective divisor of $X$ not containing $E$, we have
        \[t_{E,D}\cdot m_{X}(E) \geq m_Y(D),\]
        where $m_X$ (resp. $m_Y$) denotes the orbifold multiplicity on $X$ (resp. $Y$).
    \end{enumerate}
\end{defi}

We denote by $\Hol((X,\Delta_X),(Y,\Delta_Y))$ the set of holomorphic orbifold maps from $(X,\Delta_X)$ to $(Y,\Delta_Y)$.

\subsection{Orbifold bundles}

In this subsection we recall some statements regarding bundles on an orbifold. 
For the logarithmic cotangent bundle, we refer to \cite{Noguchi_logarithmic_1986}.

\subsubsection{Adapted coverings}

This subsection is based on \cite{claudon_expose_2015}.

\begin{defi}
    Let $(X,\Delta)$ be a smooth orbifold pair, with $\Delta=\sum \left(1-\frac{1}{m_i}\right)\Delta_i$. A \emph{$\Delta$-adapted covering} is a Galois ramified covering $\pi:Y\to (X,\Delta)$ such that
    \begin{enumerate}
        \item $Y$ is a smooth projective variety,
        \item $\pi$ ramifies exactly with multiplicity $m_i$ over $\Delta$,
        \item the support of $\pi^\ast \Delta+\mathrm{Ram}(\pi)$ has only normal crossings, and the support of the branch locus of $\pi$ has only normal crossings.
    \end{enumerate}
\end{defi}

\begin{rmq}
By \cite[Proposition~4.1.12]{lazarsfeld_positivity_2004-1}, there always exists such a $\Delta$-adapted covering.
\end{rmq}

We can describe locally this covering as follows.

Let $\pi:Y\to X$ be a $\Delta$-adapted covering. Let $y\in Y$ be a point. One can choose $(w_1,\dots, w_n)$ local coordinates centered in $y\in Y$ and $(z_1,\dots, z_n)$ local coordinates on $X$ centered in $\pi(y)$ such that $\pi$ is described by:
\[\pi(w_1,\dots, x_n)=\left(w_1^{m_1}, \dots, w_k^{m_k}, w_{k+1},\dots, w_{n-j}, w_{n-j+1}^{p_j},\dots, w_{n}^{p_1}\right),\]
assuming that $\pi\left(\{w_i=0\}\right)\subset \Delta$ for $i\in \ldbrack 1,k \rdbrack$ and $\pi\left(\{w_i=0\}\right)\subset \mathrm{Ram}(\pi)\backslash \pi^{-1}(\Delta)$ for $i\in \ldbrack n-j+1,n \rdbrack$.

\subsubsection{Orbifold (co)tangent bundle}

This subsection is also based on \cite{claudon_expose_2015}.

Let $(X,\Delta)$ be an orbifold with $\Delta=\sum\left(1-\frac{1}{m_i}\right)\Delta$.
We want to define the orbifold cotangent bundle. The main goal is to give a sense to $\frac{d z_i}{z_i^{1-\frac{1}{m_i}}}$. Let us consider $\pi:Y\to X$ a $\Delta$-adapted covering. 

\begin{defi}
    The \emph{orbifold cotangent bundle} associate to $\pi$, denoted by $\Omega^1(\pi,\Delta)$, is defined to be the subsheaf of $\pi^\ast \Omega^1_X(\log \lceil \Delta\rceil)$ fitting in the following short exact sequence:
\[0 \to \Omega^1(\pi,\Delta) \to \pi^\ast \Omega^1_X(\log\lceil \Delta\rceil)\overset{\pi^\ast \mathrm{res}}{\longrightarrow} \underset{m_i<\infty}{\bigoplus_{i\in I}} \mathcal{O}_{\pi^\ast \Delta_i/m_i} \to 0.\]
\end{defi}

We can describe the orbifold cotangent bundle locally. In coordinates as above, it is generated by the formal pull-back:
\[\left(w_1^{1-{m_1}}\pi^\ast d z_1,\dots, w_k^{1-{m_k}}\pi^\ast d z_k, \pi^\ast d z_{k+1},\dots, \pi^\ast d z_{n-j}, \pi^\ast d z_{n-j+1},\dots, \pi^\ast d z_n\right).\]

\begin{rmq}
    In local coordinates, it can also be rewritten as
    \[\left( d w_1,\dots,  d w_k,  d w_{k+1},\dots,  d w_{n-j}, w_{n-j+1}^{p_j-1} d w_{j+1},\dots, w_n^{p_1-1} d w_n\right).\]
\end{rmq}

In particular, one can note that $\Omega^1(\pi,\Delta)$ is locally free, that is why it is called orbifold cotangent \emph{bundle}.
One can note that the orbifold cotangent bundle is generated by the formal pull-back images of the forms $\frac{ d z_i}{z_i^{1-\frac{1}{m_i}}}$.

Once the notion of orbifold cotangent bundle is well defined, it is natural to consider its exterior powers. In particular, its determinant is defined by 
\[\det \left(\Omega^1(\pi,\Delta)\right)=\Omega^n(\pi, \Delta)=\pi^\ast \left(K_X+\Delta\right).\]

\begin{defi}\label{def-tang-orbi}
    The \emph{orbifold tangent bundle}, associated to $\pi$, denoted by $T(\pi,\Delta)$ is defined to be the dual of $\Omega_{(\pi,\Delta)}$.
\end{defi}

We can describe the orbifold tangent bundle locally. In coordinates as above, it is generated by the formal pull-back
\[\left(w_1^{m_1-1}\pi^\ast\frac{\partial}{\partial z_1},\dots, w_k^{m_k-1}\pi^\ast\frac{\partial}{\partial z_k}, \pi^\ast \frac{\partial}{\partial z_{k+1}} ,\dots, \pi^\ast \frac{\partial}{\partial  z_{n-j}}, \pi^\ast\frac{\partial}{\partial z_{n-j+1}},\dots, \pi^\ast\frac{\partial}{\partial z_{n}} \right).\]

\begin{rmq}\label{rmq-tangent-orbi}
   When $\pi:Y\to (X,\Delta)$ is orbi-étale, \ie $\mathrm{Ram}(\pi)\backslash \pi^{-1}(\Delta)=\emptyset$, the orbifold tangent bundle $T(\pi,\Delta)$ is exactly given by $TY$.
\end{rmq}

\subsubsection{Positivity of orbifold cotangent bundle}

In this subsection, we would like to extend the classical positivity notions for the orbifold cotangent bundle. This subsection is based on \cite{darondeau_quasi-positive_2020}.

\begin{defi}
    Given an adapted covering $\pi : Y\to (X,\Delta)$, the \emph{sheaf of orbifold symmetric differential
forms} is the direct image sheaf
\[S^{[N]}\Omega(X,\Delta) = \pi_\ast\left( \left(S^N\Omega(\pi, \Delta)\right)^{\Aut(\pi)}\right) \subset S^N\Omega_Y(\log \lceil\Delta\rceil).\]
Concretely, in local coordinates as above, it is generated by the 
\[\prod_{i=1}^n z_i^{\lceil\alpha_i/m_i\rceil}\left(\frac{dz_i}{z_i}\right)^{\alpha_i}\]
such that 
$\sum_{i=1}^n \alpha_i=N$.
\end{defi}

In view of the local expressione, we see that this sheaf is indeed independent of the choice
of $\pi$. Note that in general, the inclusion $S^{[qr]}\Omega_{(Y,\Delta)} \subset S^q\left(S^{[r]}\Omega_{(Y,\Delta)}\right)$ is strict. In fact, we
have equality if and only if $\Delta$ is purely logarithmic.

\begin{defi}
The orbifold pair $(X, \Delta)$ has a \emph{big cotangent bundle} if $\Omega_{(\pi, \Delta)}$ is big for some (hence for
all) adapted cover $\pi$. Equivalently, the orbifold cotangent bundle of $(X, \Delta)$ is big if for
some/any ample line bundle $A$ on $X$, there exists an integer $N$ such that
\[H^0\left(X, S^{[N]}\Omega(X,\Delta) \otimes A^{-1}\right) \neq \{0\}.\]
\end{defi}

Recall that the base locus of a vector bundle $E$ is defined by
\[\mathrm{Bs}(E)=\left\{x\in X\,|\, H^0(X,E)\to E_X\text{ is not surjective}\right\}.\]
To deal with ampleness, we use augmented base loci. 

\begin{defi}
The \emph{augmented base locus of $\Omega(X,\Delta)$}
is defined by
\[\mathbb{B}_+(\Omega(X,\Delta)) =
\bigcap_{N\geq 1}\bigcap_{\frac{p}{q}\in \Q}
\mathrm{Bs}\left(S^{[Nq]}\Omega(X,\Delta)\otimes A^{-Np}\right)\]
for some ample line bundle $A$ over $X$.
\end{defi}

\begin{rmq}
    Away from $\lceil\Delta\rceil$, this set turns out to be independent of the covering $\pi$.
\end{rmq}

There are many situations where we cannot expect global ampleness of $\Omega_{(\pi,\Delta)}$, that is why we recall the following intermediate positivity property introduced in \cite{darondeau_quasi-positive_2020}.

\begin{defi}
    We say that $(X,\Delta)$ has an \emph{ample cotangent bundle modulo boundary} if its orbifold augmented base locus is contained in the boundary.
\end{defi}

\begin{rmq}
    Equivalently, $(X,\Delta)$ has an ample cotangent bundle modulo boundary if for some (hence for all) adapted cover $\pi$, the orbifold cotangent bundle $\Omega(X,\Delta)$ is ample modulo $\Aut(\pi)$-invariant closed subset living over the boundary.

    Note that notion of ampleness modulo boundary of $\Omega_{(\pi,\Delta)}$ does not depend on $\pi$ (see \cite[Proposition 2.3]{darondeau_quasi-positive_2020}).
\end{rmq}

\begin{rmq}\label{ample-cover-ample-mod-delta}
   If $\pi:Y\to (X,\Delta)$ is a $\Delta$-étale covering and if $\Omega_Y$ is ample, then $(X,\Delta)$ has an ample cotangent bundle modulo $\lceil\Delta\rceil$. Indeed, since $\Omega_Y$ is ample, $\mathbb{B}_+(\Omega_Y)=\emptyset$ and so $\mathbb{B}_+\left(\Omega_{(X,\Delta)}\right)$ is empty over $X\backslash\lceil \Delta\rceil$.
\end{rmq}

We also recall an orbifold version of the fundamental vanishing theorem (see \cite[Proposition 5.6]{darondeau_quasi-positive_2020}).

 \begin{thm}[Orbifold fundamental vanishing theorem]\label{vanishing-thm-orbi}
    Let $(X,\Delta)$ be a smooth orbifold pair. Then any orbifold entire curve is contained in $\mathbb{B}_+\left(\Omega(X,\Delta)\right)$, where $\mathbb{B}_+\left(\Omega(X,\Delta)\right)$ denotes the augmented base locus.
\end{thm}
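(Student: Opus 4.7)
The plan is to adapt the classical fundamental vanishing theorem (à la Siu–Yeung–Demailly) to the orbifold setting. The classical statement reads: for $X$ projective and $A$ ample, every holomorphic section of $S^N\Omega^1_X\otimes A^{-1}$ vanishes identically along any entire curve $f\colon\C\to X$. Its proof rests on the Ahlfors–Schwarz lemma (equivalently, the tautological inequality of Nevanlinna theory). The new input in the orbifold setting is that the orbifold ramification condition on $f$ is \emph{exactly} what ensures that \emph{orbifold} symmetric differentials pull back holomorphically to~$\C$.

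Concretely, fix an ample line bundle $A$ on $X$ and a non-zero section
\[\omega \in H^0\bigl(X,\, S^{[Nq]}\Omega(X,\Delta)\otimes A^{-Np}\bigr),\]
for some integers $N\geq 1$ and $p/q\in\Q_{>0}$. By the definition of $\mathbb{B}_+(\Omega(X,\Delta))$ as the intersection of the base loci of such sections, it suffices to show that $f(\C)\subset \mathrm{Bs}(\omega)$, or equivalently that $f^\ast\omega\equiv 0$. Around any point $t_0\in\C$, choose adapted local coordinates $(z_1,\dots,z_n)$ on $X$ in which $\Delta=\sum_{i=1}^k(1-1/m_i)\{z_i=0\}$; then $\omega$ is a combination of monomials of the form $\prod_{i=1}^n z_i^{\lceil\alpha_i/m_i\rceil}(dz_i/z_i)^{\alpha_i}\otimes s^{-Np}$ with $\sum\alpha_i=Nq$, where $s$ is a local frame of $A$. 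A direct computation shows that the orbifold condition $\mathrm{mult}_{t_0}(f^\ast\{z_i=0\})\geq m_i$ (with the convention that $f(\C)$ avoids $\{z_i=0\}$ when $m_i=\infty$) exactly absorbs the fractional denominators $z_i^{1-1/m_i}$, so that $f^\ast\omega$ extends holomorphically as a section of $K_\C^{\otimes Nq}\otimes f^\ast A^{-Np}$ over a neighborhood of $t_0$; patching over~$\C$ gives a global holomorphic section of this line bundle on~$\C$.

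It remains to apply the Ahlfors–Schwarz / tautological inequality argument on~$\C$. A section of $S^{Nq}\Omega^1_{\C}\otimes f^\ast A^{-Np}$ induces, after the $Nq$-th root, a (possibly degenerate) pseudo-metric on $\C$ whose curvature current is bounded below by $\tfrac{Np}{Nq}f^\ast\omega_A$ for a positively curved metric $\omega_A$ on $A$; the Ahlfors–Schwarz lemma then forces this pseudo-metric to vanish identically unless $f$ is constant. Equivalently, the tautological inequality yields $T_{f, K_\C^{\otimes Nq}\otimes f^\ast A^{-Np}}(r)\leq -Np\,T_{f,A}(r) + O(\log r + \log T_{f,A}(r))\,\|$, and the existence of a non-zero section of this line bundle on~$\C$ would force the left-hand side to be non-negative, a contradiction for $r\to\infty$ as soon as $f$ is non-constant. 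Thus $f^\ast\omega\equiv 0$, and intersecting the vanishing loci of all such $\omega$ yields $f(\C)\subset\mathbb{B}_+(\Omega(X,\Delta))$.

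The main obstacle is the local verification in the middle paragraph: one must carefully track how the inequality $\mathrm{mult}_{t_0}(f^\ast Z_i)\geq m_i$ cancels the denominators $z_i^{1-1/m_i}$ in the generators of $S^{[N]}\Omega(X,\Delta)$ described in \cite{darondeau_quasi-positive_2020}, and check that the bound survives the ceiling function $\lceil\alpha_i/m_i\rceil$. Once the holomorphy of $f^\ast\omega$ on $\C$ is established, the Ahlfors–Schwarz / Nevanlinna step is formally identical to the classical case, since the source $\C$ carries no orbifold structure.
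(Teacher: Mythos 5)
The theorem you were asked to prove is not proved in the paper at all: it is quoted, with a citation to Darondeau--Rousseau \cite[Proposition 5.6]{darondeau_quasi-positive_2020}, as one of the two tools (together with the orbifold Brody criterion, Theorem 5.5 of the same reference) that the author borrows to deduce \cref{ample-modulo-implies-hyp} and to treat the Deligne--Mostow examples. So there is no in-paper proof to compare against; what can be assessed is whether your sketch is sound as a proof of the cited result.

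Your overall skeleton is the right one and matches the classical strategy: fix a section $\omega$ of $S^{[Nq]}\Omega(X,\Delta)\otimes A^{-Np}$, pull it back along the orbifold entire curve $f$, use the orbifold ramification condition to get a holomorphic section of $K_{\C}^{\otimes Nq}\otimes f^\ast A^{-Np}$, then derive a contradiction from the ampleness of $A$. The local holomorphy check you defer is fine: with $f_i=t^{\mu_i}u_i$, $\mu_i\ge m_i$, the order of $f^\ast\bigl(z_i^{\lceil\alpha_i/m_i\rceil}(dz_i/z_i)^{\alpha_i}\bigr)$ at $t_0$ is $\mu_i\lceil\alpha_i/m_i\rceil-\alpha_i\ge m_i\lceil\alpha_i/m_i\rceil-\alpha_i\ge 0$, so the ceiling does survive.

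The genuine gap is in your final sentence, where you assert that ``the Ahlfors--Schwarz / Nevanlinna step is formally identical to the classical case, since the source $\C$ carries no orbifold structure.'' That is not true, and it is precisely where the orbifold difficulty of the theorem lives. In the non-orbifold proof, the Ahlfors--Schwarz step requires the pointwise bound $|f^\ast\omega|_{h_A}\le C\,|f'|^{Nq}_{\omega_A}$ (with $\omega_A$ the Kähler form of $A$), so that the singular metric $\gamma_0=|f^\ast\omega|_{h_A}^{2/(Nq)}$ is dominated by $f^\ast\omega_A$ and the Gaussian curvature $\le -C<0$. One gets that bound because $\omega$ is a \emph{bounded} section of $S^{Nq}\Omega^1_X\otimes A^{-Np}$ for a smooth metric on $\Omega^1_X$. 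In the orbifold case, $\omega$ is a section of $S^{[Nq]}\Omega(X,\Delta)$, which, read in a smooth metric on $\Omega^1_X$, has poles of order up to $\alpha_i-\lceil\alpha_i/m_i\rceil$ along each $\Delta_i$; so $|\omega|$ is \emph{not} bounded and the crucial inequality fails. Concretely, take $n=2$, $m_1=\mu_1=2$, $\alpha=(Nq,0)$, and a point $t_0$ where $f'(t_0)=0$ with $f$ tangent to $Z_1$ to order exactly $2$: the term $f_1^{\lceil Nq/2\rceil-Nq}(f_1')^{Nq}$ vanishes to order only $\approx Nq/2$ at $t_0$, while $|f'|^{Nq}$ vanishes to order $\ge Nq$, so the quotient $|f^\ast\omega|/|f'|^{Nq}_{\omega_A}$ blows up. Replacing the smooth metric by the orbifold metric on $\Omega^1_X$ (in which $dz_i$ has weight $|z_i|^{1-1/m_i}$) restores the boundedness of $|\omega|$, but reverses the inequality you need to compare $\gamma_0$ with $f^\ast\omega_A$, since the orbifold metric on $T_X$ \emph{blows up} along $\Delta$. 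The actual proof of Darondeau--Rousseau therefore has to work harder at this step, using the orbifold structure on the target in an essential way (a logarithmic-derivative/tautological estimate adapted to the singular orbifold metric, or a reduction via an adapted covering), rather than reducing to a verbatim copy of the classical argument. You should not present the analytic step as automatic once holomorphy of $f^\ast\omega$ is known; that is the point of Proposition 5.6, not a corollary of it.
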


As a direct consequence, we have the following proposition.  
\begin{prop}\label{ample-modulo-implies-hyp}
Let $(X,\Delta)$ be a smooth projective orbifold. If $(X,\Delta)$ has an ample cotangent bundle modulo $\lceil\Delta\rceil$, then $(X,\Delta)$ does not admit any non-constant orbifold entire curve $\C \to (X,\Delta)$, \ie $(X,\Delta)$ is said to be
\emph{Brody-hyperbolic}.
\end{prop}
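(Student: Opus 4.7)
The plan is to argue by contradiction, directly combining the orbifold fundamental vanishing theorem with the definition of ample cotangent bundle modulo boundary.

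Suppose $f : \C \to (X,\Delta)$ is a non-constant orbifold entire curve. By the defining property of an orbifold morphism from a disk (extended in the obvious way to $\C$), we have $f(\C) \not\subset \lceil\Delta\rceil$. On the other hand, by the Orbifold fundamental vanishing theorem (\cref{vanishing-thm-orbi}), any such orbifold entire curve must satisfy
\[f(\C) \subset \mathbb{B}_+\bigl(\Omega(X,\Delta)\bigr).\]
By the hypothesis that $(X,\Delta)$ has an ample cotangent bundle modulo $\lceil\Delta\rceil$, the augmented base locus is contained in the boundary:
\[\mathbb{B}_+\bigl(\Omega(X,\Delta)\bigr) \subset \lceil\Delta\rceil.\]
Chaining these two inclusions yields $f(\C) \subset \lceil\Delta\rceil$, contradicting the orbifold morphism condition. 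Hence no such non-constant orbifold entire curve exists, which is exactly Brody-hyperbolicity.

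There is no real obstacle here: the proposition is essentially a repackaging of \cref{vanishing-thm-orbi} together with the definition of ampleness modulo boundary. The only mild subtlety is ensuring that the vanishing theorem, as stated, applies in the form needed (\ie that an orbifold entire curve whose image is not fully contained in $\lceil\Delta\rceil$ is still captured by the base locus statement); this is precisely what the theorem provides, since $\mathbb{B}_+(\Omega(X,\Delta))$ is defined globally on $X$ and the hypothesis of ampleness modulo $\lceil\Delta\rceil$ constrains it outside the boundary.
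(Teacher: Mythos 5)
Your proof is correct and is exactly the argument the paper intends: the paper labels the proposition a ``direct consequence'' of \cref{vanishing-thm-orbi}, and you have filled in precisely that chain of inclusions (curve $\subset \mathbb{B}_+ \subset \lceil\Delta\rceil$) together with the contradiction against the orbifold morphism condition $f(\C)\not\subset\lceil\Delta\rceil$.
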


\section{Orbifold hyperbolicity}

\subsection{Definitions and first properties}

\begin{defi} Let $(X,\Delta)$ be an orbifold.
        The \emph{orbifold Kobayashi pseudo-distance} $d_{(X,\Delta)}$ on $(X,\Delta)$ is the largest pseudo-distance on $X\backslash \lfloor \Delta\rfloor$ such that 
        \[g^\ast  d_{(X,\Delta)} \leq d_P\]
        for every orbifold morphism $g:\D\to (X,\Delta)$, where $d_P$ denotes the Poincaré distance on $\D$.
\end{defi}

As an immediate consequence of the definition, we have

\begin{prop}\label{distance-decreasing-orbi}
    Let $f:(X,\Delta_X) \to (Y,\Delta_Y)$ be an orbifold morphism. Then $f$ is distance-decreasing, \ie
    \[f^\ast d_{(Y,\Delta_Y)} \leq d_{(X,\Delta_X)} 
    .\]
\end{prop}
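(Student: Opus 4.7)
The plan is to exploit the maximality property defining the orbifold Kobayashi pseudo-distance. Set $\delta := f^\ast d_{(Y,\Delta_Y)}$ as a pseudo-distance on $X\setminus \lfloor\Delta_X\rfloor$. Since $d_{(X,\Delta_X)}$ is the \emph{largest} pseudo-distance on $X\setminus\lfloor\Delta_X\rfloor$ whose pullback under any orbifold morphism $\D\to(X,\Delta_X)$ is dominated by $d_P$, it suffices to verify the same property for $\delta$. Concretely, I would show that for every orbifold morphism $g:\D\to(X,\Delta_X)$ one has
\[
g^\ast\delta = (f\circ g)^\ast d_{(Y,\Delta_Y)} \leq d_P,
\]
and then invoke maximality to conclude $\delta\leq d_{(X,\Delta_X)}$.

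The one substantive intermediate step is to check that orbifold morphisms compose: if $g:\D\to(X,\Delta_X)$ and $f:(X,\Delta_X)\to(Y,\Delta_Y)$ are orbifold morphisms, then $f\circ g:\D\to(Y,\Delta_Y)$ is an orbifold morphism. The non-degeneracy condition $(f\circ g)(\D)\not\subset\lceil\Delta_Y\rceil$ follows from conditions (1) in \cref{def-orb-map} applied to $f$ and $g$ (plus the fact that $g(\D)$ meets the complement of $\lceil\Delta_X\rceil$). For the multiplicity condition, fix an irreducible component $D$ of $\lceil\Delta_Y\rceil$ and a point $x\in\D$ with $(f\circ g)(x)\in D$. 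Writing $f^\ast D=\sum_{E} t_{E,D}\,E$ and using additivity of multiplicities,
\[
\mult_x\bigl((f\circ g)^\ast D\bigr) \;=\; \sum_E t_{E,D}\,\mult_x(g^\ast E),
\]
where the sum is over components $E$ of $X$ passing through $g(x)$ (none of these are contained in $g(\D)$ since $g$ is an orbifold morphism, hence not constant into $\lceil\Delta_X\rceil$). If $E\subset\lceil\Delta_X\rceil$ the orbifold condition for $g$ gives $\mult_x(g^\ast E)\geq m_X(E)$, and otherwise $\mult_x(g^\ast E)\geq 1 = m_X(E)$. Combined with the orbifold condition for $f$, namely $t_{E,D}\,m_X(E)\geq m_Y(D)$, at least one summand already dominates $m_Y(D)$, so the desired inequality $\mult_x\bigl((f\circ g)^\ast D\bigr)\geq m_Y(D)$ holds.

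Once composition is established, applying the defining inequality of $d_{(Y,\Delta_Y)}$ to the orbifold morphism $f\circ g:\D\to(Y,\Delta_Y)$ yields $(f\circ g)^\ast d_{(Y,\Delta_Y)}\leq d_P$, i.e.\ $g^\ast\delta\leq d_P$. By maximality of $d_{(X,\Delta_X)}$ this forces $\delta\leq d_{(X,\Delta_X)}$, which is exactly the claim. The only place where anything could go wrong is the composition verification, and even there the only mildly delicate point is correctly handling the case where a component $E$ appearing in $f^\ast D$ lies outside $\lceil\Delta_X\rceil$; the orbifold multiplicities are set up precisely so that this case is absorbed by the convention $m_X(E)=1$.
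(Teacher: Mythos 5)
Your overall strategy is the right one and is essentially the argument the paper has in mind: the paper states this proposition with no proof at all, calling it ``an immediate consequence of the definition,'' and the implicit reasoning is exactly what you wrote---pull back along $f$, test against orbifold disks $g:\D\to(X,\Delta_X)$ via the composition $f\circ g$, and invoke maximality of $d_{(X,\Delta_X)}$.

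There is, however, one real gap in your composition lemma, concentrated in the parenthetical justification ``none of these are contained in $g(\D)$ since $g$ is an orbifold morphism, hence not constant into $\lceil\Delta_X\rceil$.'' What you actually need is that $g(\D)$ is not contained in \emph{any} component $E$ of $f^*D$, but the orbifold-morphism condition on $g$ only forbids $g(\D)\subset\lceil\Delta_X\rceil$. It is entirely possible to have a component $E$ of $f^*D$ with $E\not\subset\lceil\Delta_X\rceil$: condition~(2) in \cref{def-orb-map} then forces the ramification $t_{E,D}\geq m_Y(D)$, which is exactly what happens when $f$ ramifies over $D$ while $X$ carries no orbifold multiplicity along $E$ (e.g.\ $X$ a cyclic cover of $Y$ branched over $\lceil\Delta_Y\rceil$ with $\Delta_X=0$). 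In that situation an orbifold disk $g$ with $g(\D)\subset E$ is perfectly legitimate, the composition $f\circ g$ has image inside $D\subset\lceil\Delta_Y\rceil$, so $f\circ g$ fails condition (1) of \cref{def-orb-map} and your multiplicity computation $\mult_x((f\circ g)^*D)=\sum_E t_{E,D}\mult_x(g^*E)$ is not even well-posed. Consequently you cannot directly apply the defining inequality of $d_{(Y,\Delta_Y)}$ to $f\circ g$, and maximality of $d_{(X,\Delta_X)}$ cannot be invoked without first showing that these degenerate disks \emph{also} satisfy $g^*\delta\leq d_P$. This requires a supplementary argument (for instance, working with chains of disks and approximating the degenerate ones, or observing that $f\circ g$ lands in the induced orbifold on $D$ and controlling $d_{(Y,\Delta_Y)}$ along the boundary), none of which is sketched. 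So the proof as written is incomplete at precisely the point you flagged as the only ``mildly delicate'' one.
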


\begin{defi}
 An orbifold $(X,\Delta)$ is \emph{hyperbolic}
if the orbifold Kobayashi pseudo-distance 
$d_{(X,\Delta)}$ is a distance on
$X\backslash \lfloor \Delta\rfloor$.
\end{defi}

\subsection{One dimensional case}

Due to the Uniformization Theorem for compact Riemann surfaces, it is well known that a compact Riemann surface $X$ is hyperbolic if, and only if, the genus of $X$ satisfies $g_X\geq 2$. 

There is a similar characterization for the orbifold curves considering the degree of the orbifold canonical bundle.

\begin{prop}[See Corollary 5 in \cite{campana_brody_2009}]\label{dim1-orbi}
    Let $(X,\Delta)$ be a smooth compact orbifold curve.
Then $(X,\Delta)$ is hyperbolic if and only if $K_X+\Delta>0$.

More precisely, $(X,\Delta)$ is not hyperbolic if and only if  one of the following conditions hold:
\begin{enumerate}
    \item $X$ is an elliptic curve and $\Delta$ is empty.
    \item $X \simeq \mathbb{P}^1$ and $\lceil \Delta \rceil$ contains at most two points.
    \item $X \simeq \mathbb{P}^1$ and there are numbers: $p \leq q \leq r \in \N \cup \{\infty\}\backslash\{1\}$ such that $(X,\Delta)$
is isomorphic to
\[\left(\mathbb{P}^1,\left(1-\frac{1}{p}\right)\{0\}+\left(1-\frac{1}{q}\right)\{1\}+\left(1-\frac{1}{r}\right)\{\infty\}\right)\]
and $\frac{1}{p}+\frac{1}{q}+\frac{1}{r}\geq 1$ (there are exactly 5 possibilities for $(p,q,r)$: $(2,3,4)$, $(2,3,5),$ $(2,3,6)$,\, $(2,4,4)$,\, $(3,3,3)$).
    \item There is a point $\lambda\in \C\backslash\{0,1\}$ such that $(X,\Delta)$ is isomorphic to
    \[\left(\mathbb{P}^1,\left(1-\frac{1}{2}\right)\{0\}+\left(1-\frac{1}{2}\right)\{1\}+\left(1-\frac{1}{2}\right)\{\infty\}+\left(1-\frac{1}{2}\right)\{\lambda\}\right).\]
\end{enumerate}
\end{prop}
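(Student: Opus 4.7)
\begin{demo}
The plan is to reduce the question to the classical uniformization theorem for compact Riemann surfaces, namely that a smooth compact Riemann surface $Y$ is hyperbolic if and only if $g_Y\geq 2$, by passing to a suitable ramified covering of $X$. Assume first that every orbifold multiplicity $m_i$ is finite. The existence of $\Delta$-adapted coverings yields a Galois ramified covering $\pi:Y\to X$ by a compact Riemann surface $Y$ ramifying with multiplicity exactly $m_i$ over each $\Delta_i$. Riemann--Hurwitz then gives
\[K_Y \;=\; \pi^\ast K_X + \sum_i (m_i-1)\widetilde{\Delta}_i \;=\; \pi^\ast\bigl(K_X+\Delta\bigr),\]
where $\widetilde{\Delta}_i$ is the reduction of $\pi^\ast\Delta_i$, so $\deg K_Y=(\deg\pi)\cdot\deg(K_X+\Delta)$ and consequently $g_Y\geq 2$ if and only if $\deg(K_X+\Delta)>0$.

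The next step is to transfer hyperbolicity between $Y$ and $(X,\Delta)$. By \cref{def-orb-map}, every orbifold disc $h:\D\to(X,\Delta)$ admits a local, and hence (by simple connectedness of $\D$) global, holomorphic lift $\widetilde h:\D\to Y$; conversely, composing with $\pi$ turns ordinary discs in $Y$ into orbifold discs in $(X,\Delta)$. Combined with \cref{distance-decreasing-orbi} and Brody's reparametrization lemma in the compact setting, this yields the equivalence that $Y$ is hyperbolic if and only if $(X,\Delta)$ is, completing the finite-multiplicity case.

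When some $m_i=\infty$, no compact $\Delta$-adapted cover exists, and I would instead work with the universal cover of $X\setminus\lceil\Delta^{\infty}\rceil$ equipped with the residual orbifold structure coming from the finite multiplicities. The claim is that this universal cover is biholomorphic to $\D$ exactly when the orbifold Euler characteristic $\chi(X,\Delta)=\chi(X)-\sum_i(1-1/m_i)$ is negative, which is still equivalent to $\deg(K_X+\Delta)>0$. The main technical obstacle is checking that orbifold discs on $(X,\Delta)$ lift bijectively to honest discs on this uniformizing cover in the simultaneous presence of punctures and orbifold ramification points.

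Finally, the explicit list of non-hyperbolic orbifold curves follows by enumerating all $(g_X,(m_i))$ with $\deg(K_X+\Delta)=2g_X-2+\sum_i(1-1/m_i)\leq 0$. The case $g_X\geq 2$ is impossible since then $\deg(K_X+\Delta)\geq 2g_X-2>0$; for $g_X=1$ only $\Delta=\emptyset$ remains, giving case (1); for $g_X=0$ the constraint $\sum_i(1-1/m_i)\leq 2$ leaves at most two boundary points (case (2)), exactly three points $(p,q,r)$ with $\tfrac{1}{p}+\tfrac{1}{q}+\tfrac{1}{r}\geq 1$ (case (3)), or exactly four points all of multiplicity two (case (4)).
\end{demo}
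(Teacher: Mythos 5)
This proposition is quoted from Campana--Winkelmann (Corollary 5 in \cite{campana_brody_2009}); the paper does not reprove it, so I evaluate your argument on its own merits.

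Your proof has a genuine gap in the finite-multiplicity case, precisely at the step where you claim that ``every orbifold disc $h:\D\to(X,\Delta)$ admits a local, and hence global, holomorphic lift $\widetilde h:\D\to Y$.'' Recall (see the footnote in the introduction and the definition of orbifold maps $\D\to(X,\Delta)$) that this paper works with orbifold morphisms in \emph{Campana's sense}: the ramification condition is the inequality $\mult_x(h^\ast Z_i)\geq m_i$, not the divisibility condition $m_i\mid\mult_x(h^\ast Z_i)$. A $\Delta$-adapted cover $\pi:Y\to X$ ramifies with multiplicity \emph{exactly} $m_i$ over $\Delta_i$, so a local lift of $h$ through $\pi$ exists only when $\mult_x(h^\ast Z_i)$ is a multiple of $m_i$. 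A Campana orbifold disc with $\mult_x(h^\ast Z_i)=m_i+1$, say, simply does not lift. Consequently the class of Campana orbifold discs on $(X,\Delta)$ is strictly larger than the class of discs descending from $Y$, and your reduction ``$(X,\Delta)$ hyperbolic $\iff Y$ hyperbolic'' is not justified in the direction you need (``if $\deg(K_X+\Delta)>0$ then $(X,\Delta)$ is hyperbolic'', i.e.\ bounding the orbifold pseudo-distance from below). The other direction \emph{is} fine: composing any $g:\D\to Y$ with $\pi$ does produce an orbifold disc in $(X,\Delta)$, so if $g_Y\leq 1$ you get a non-hyperbolic orbifold, which yields ``hyperbolic $\Rightarrow K_X+\Delta>0$.''

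To close the gap one must argue directly with a metric. The standard route (and the one effectively underlying Campana--Winkelmann) is to construct on $X\setminus\lceil\Delta\rceil$ a conformal metric with curvature $\leq -1$ and prescribed singular behaviour of order $|z|^{-2(1-1/m_i)}$ near a boundary point of multiplicity $m_i$ (e.g.\ descending a scaled Poincar\'e metric from $Y$). One then applies an Ahlfors--Schwarz--Pick lemma to an arbitrary Campana orbifold disc $h$: the inequality $\mult_x(h^\ast Z_i)\geq m_i$ is exactly what guarantees that $h^\ast$ of this singular metric stays bounded by the Poincar\'e metric on $\D$, so $h$ is distance-decreasing and $d_{(X,\Delta)}$ is non-degenerate. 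This is the missing idea, and it is genuinely different from trying to lift curves to the adapted cover. The same metric argument also handles the case $m_i=\infty$ (where the singular behaviour near the puncture is that of the cusp metric), making the separate discussion of the universal cover of $X\setminus\lceil\Delta^\infty\rceil$ unnecessary. Your Riemann--Hurwitz computation and the case-by-case enumeration of $(g_X,(m_i))$ with $\deg(K_X+\Delta)\leq 0$ are correct as stated.
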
    

\subsection{\texorpdfstring{Hyperbolic lines arrangment in $\mathbb{P}^2$}{Hyperbolic line arrangment in P2}}

The following example is one of Deligne-Mostow examples (see \cite[Section 4.11]{holzapfel_ball_1998}).

Let $P_1,\, P_2,\, P_3,\, P_4$ be four points in general position on $\PP^2$. The projective line through $P_i,\, P_j,\, i\neq j$ is denoted by $L_{ij} = L_{ji}$.
The six lines $L_{ij}$ are drawn in \cref{quadrihedral}.

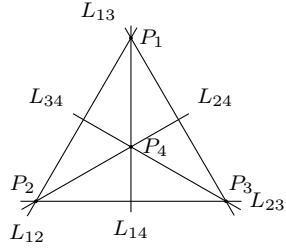
\begin{figure}[h!]
    \centering
    \begin{tikzpicture}[scale=0.5]
    \begin{footnotesize}
\draw (7.030923057358191,3.31408933472715)-- (11.432812864648628,5.855521599909354) node[above right] {$L_{24}$};
\draw (9.908582755888318,8.188438280660327)-- (9.908582755888316,3.2423912736190745) node[below] {$L_{14}$};
\draw (12.796175688186956,3.308354379536975) -- (8.383222213533088,5.85617425604968)node[above left] {$L_{34}$};
\draw (12.625547509863093,3.1563372859995953) -- (9.752464606495895,8.132662848989092)node[above left] {$L_{13}$};
\draw (7.001750562457261,3.5321312633355912) -- (12.816067160374057,3.5321312633355912) node[right] {$L_{23}$};
\draw (10.067408811903627,8.13735308084208) -- (7.18161548426111,3.139012417221984) node[below] {$L_{12}$};
\filldraw (7.408582755888316,3.5321312633355912) circle (1pt);
\draw (7.608582755888316,3.5321312633355912) node[above left] {$P_2$};
\filldraw (12.408582755888316,3.5321312633355912) circle (1pt);
\draw (12.808582755888316,3.5321312633355912) node[above] {$P_3$};
\filldraw (9.908582755888318,4.975506936309656) circle (1pt);
\draw (10, 4.975506936309656) node[right] {$P_4$};
\filldraw (9.90858275588832,7.862258282257785) circle (1pt) node[right] {$P_1$};
\end{footnotesize}
\end{tikzpicture}
    \caption{Quadrihedral}
    \label{quadrihedral}
\end{figure}

 We blow up the four triple points $P_i$. The blown up surface is denoted by $Y$. As in \cref{blowup}, we denote the exceptional lines
on $Y$ by $L_{0j}, \, j \in \ldbrack 1,4\rdbrack$.

\begin{figure}[h!]
    \centering
     \begin{tikzpicture}[scale=0.5]
    \begin{footnotesize}
\draw (0.6822866862816246,1)-- (4.079798939022026,1) node[right] {$L_{23}$};
\draw (3.7742479663158934,0.8120921012754374)-- (5.482645244232453,3.7711229861392854) node[right] {$L_{03}$};
\draw (5.490771129832862,3.4109549898509446)-- (3.759030506108087,6.410417735673274) node[above] {$L_{13}$};
\draw  (4.087573566602667,6.196152422706633)-- (0.690061313862266,6.196152422706634) node[left] {$L_{01}$};
\draw (1.006116664854312,6.409852890535482)-- (-0.7319777326345875,3.399385085733896) node[left] {$L_{12}$};
\draw (-0.7328261921688676,3.798236911994281)-- (1.0016612921654329,0.7940164640349434) node[below] {$L_{02}$};
\draw (2.382736642227377,3.598076211353316) circle (1cm);
\draw (3.5,3.598076211353316) node[right] {$L_{04}$};
\draw (1.3435061576860414,4.198076211353322) -- (-0.01136664701203266,4.980312389863459) node[left] {$L_{34}$};
\draw (1.6899163191998368,3.99807621135331)-- (2.209531561470482,3.6980762113533205);
\draw (2.555941722984273,3.498076211353311)-- (4.799154752498138,2.2029565649138045);
\draw (2.382736642227377,6.402730366257467)-- (2.382736642227377,3.7980762113533193);
\draw (2.382736642227377,3.3980762113533123)-- (2.382736642227377,2.798076211353315);
\draw (2.3827366422273766,2.398076211353318)-- (2.3827366422273766,0.80577297398705) node[below] {$L_{14}$};
\draw (3.421967126768692,4.19807621135331) -- (4.810179551011993,4.999561028182562) node[right] {$L_{24}$};
\draw (3.075556965254937,3.9980762113533226)-- (2.555941722984268,3.698076211353319);
\draw (2.2095315614704854,3.4980762113533155)-- (-0.0018505508370246226,2.2213341535314512);
    \end{footnotesize}
\end{tikzpicture}
    \caption{Blown up surface}
    \label{blowup}
\end{figure}
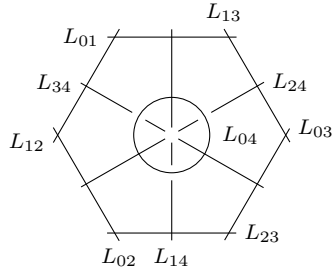

We denote by $\nu_{ij}$ the weight of $L_{ij}$ and we set
\[\Delta=\sum_{0\leq i<j\leq 4} \left(1-\frac{1}{\nu_{ij}}\right) L_{ij}.\]

In \cref{weights-table} we arrange the weights $\nu_{ij}$ in a table putting the $\nu_{0j}$'s into the middle part.

\begin{figure}[h!]
    \centering
    \begin{tabular}{|cccc|}
\cline{1-4}
$\nu_{14}$                       & $\nu_{13}$ & $\nu_{24}$                     &  $\nu_{23}$\\ \cline{2-3}
\multicolumn{1}{|c|}{$\nu_{12}$} & $\nu_{01}$ & \multicolumn{1}{c|}{$\nu_{02}$}& $\nu_{34}$\\
\multicolumn{1}{|c|}{$\nu_{34}$} & $\nu_{03}$ & \multicolumn{1}{c|}{$\nu_{04}$}& $\nu_{12}$\\ \cline{2-3}
$\nu_{14}$                       & $\nu_{24}$ & $\nu_{13}$                      & $\nu_{23}$\\ \cline{1-4}
\end{tabular}
    \caption{Table of weights}
    \label{weights-table}
\end{figure}

\begin{prop}\label{Deligne-Mostow-hyp}
    If the weights are one of the weights listed in \cref{hyperbolic-weights}, then the orbifold pair $(Y,\Delta)$ is (Kobayashi)-hyperbolic.
\end{prop}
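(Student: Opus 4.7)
The plan is to deduce the Kobayashi hyperbolicity of $(Y,\Delta)$ from a ball uniformization provided by the Deligne--Mostow construction. For each weight configuration listed in \cref{hyperbolic-weights}, the results of \cite[Section~4.11]{holzapfel_ball_1998} produce a lattice $\Gamma \subset \mathrm{PU}(2,1)$ acting on the complex $2$-ball $\mathbb{B}^2$ together with a Galois orbifold uniformization $\pi \colon \mathbb{B}^2 \to (Y,\Delta)$ whose ramification index over each component $L_{ij}$ of $\lceil\Delta\rceil$ equals the prescribed multiplicity $\nu_{ij}$. The first step of the proof would be to recall this uniformization statement, and to check that each line of the table satisfies the Deligne--Mostow arithmetic conditions that guarantee such a lattice exists.

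Given the uniformization, hyperbolicity will follow from a standard distance-decreasing argument. Let $g \colon \D \to (Y,\Delta)$ be an orbifold morphism. The multiplicity condition $\mult_x(g^* L_{ij}) \geq \nu_{ij}$ in the definition of an orbifold disk exactly matches the ramification indices of $\pi$ along $L_{ij}$, so $g$ admits a holomorphic lift $\tilde g \colon \D \to \mathbb{B}^2$ with $\pi \circ \tilde g = g$. The Schwarz--Pick lemma on $\mathbb{B}^2$ then yields $\tilde g^* d_{\mathbb{B}^2} \leq d_\D$. Pushing $d_{\mathbb{B}^2}$ forward through $\pi$ produces a $\Gamma$-invariant genuine distance $\bar d$ on $Y$ (here $\lfloor\Delta\rfloor=\emptyset$, and the $\Gamma$-action is properly discontinuous), and $\pi \circ \tilde g = g$ then gives $g^* \bar d \leq d_\D$ for every orbifold disk $g$. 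By the maximality defining $d_{(Y,\Delta)}$, one obtains $\bar d \leq d_{(Y,\Delta)}$; since $\bar d$ separates points of $Y$, so does $d_{(Y,\Delta)}$, and $(Y,\Delta)$ is hyperbolic.

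The main obstacle is the orbifold lifting statement for disks in Campana's generalised (non-classical) sense: the ramification condition is the inequality $\mult_x(g^* L_{ij}) \geq \nu_{ij}$ rather than a divisibility, and one must check that this still suffices to extract holomorphic $\nu_{ij}$-th roots locally around each preimage of $\lceil\Delta\rceil$ and glue them into a global lift $\tilde g$. As a more robust alternative, should this lifting turn out to be delicate, I would pass to a $\Delta$-adapted covering $Y' \to (Y,\Delta)$ corresponding to a torsion-free finite-index subgroup $\Gamma' \subset \Gamma$: the cover $Y' \simeq \mathbb{B}^2/\Gamma'$ is then a smooth ball quotient, hence Kobayashi hyperbolic, and the hyperbolicity of $(Y,\Delta)$ transfers downstairs through the functoriality of the orbifold Kobayashi pseudo-distance under adapted covers (\cref{distance-decreasing-orbi}).
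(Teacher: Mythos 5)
Your proposal has two real gaps, the first of which you yourself flagged. In Campana's non-classical sense the ramification condition on an orbifold disk $g\colon\D\to(Y,\Delta)$ is the inequality $\mult_x(g^\ast L_{ij})\geq\nu_{ij}$, not a divisibility. If $\mult_x(g^\ast L_{ij})$ is, say, $\nu_{ij}+1$, there is no holomorphic $\nu_{ij}$-th root locally, so the disk does not lift to $\mathbb{B}^2$ (or to any $\Delta$-adapted cover of $(Y,\Delta)$). This is exactly why the paper does not attempt a direct distance-decreasing argument via lifting. Your fallback route is also broken: the covering map $Y'\to(Y,\Delta)$ is an orbifold morphism, so \cref{distance-decreasing-orbi} gives $\pi^\ast d_{(Y,\Delta)}\leq d_{Y'}$, which is an inequality in the \emph{wrong} direction — hyperbolicity of the smooth ball quotient $Y'$ upstairs does not descend to $(Y,\Delta)$ by that proposition.

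The deeper omission concerns what actually distinguishes Kobayashi from Brody hyperbolicity in this non-classical setting. The paper proves the statement via the Orbifold Brody criterion (\cref{Brody-orbi}): one must verify Brody hyperbolicity not only of $(Y,\Delta)$ itself, but of every stratum $(X_I,\Delta_{\overline I})$ — here, each line $L_{ij}$ carrying its induced orbifold structure. For the top stratum the paper does use the ball quotient, but in a different way than you propose: the ample cotangent bundle of $\mathbb{B}^2/\Gamma$ gives $(Y,\Delta)$ an ample cotangent bundle modulo boundary (\cref{ample-cover-ample-mod-delta}), and then the orbifold fundamental vanishing theorem (\cref{ample-modulo-implies-hyp}) rules out orbifold entire curves; no disk-lifting is needed. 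For the boundary strata the paper computes the degree of the induced orbifold canonical bundle on each $L_{ij}$ and checks it is positive, invoking the one-dimensional criterion (\cref{dim1-orbi}). Your proposal addresses none of the one-dimensional strata, and since orbifold entire curves can very well lie inside $\lceil\Delta\rceil$, this is not a formality but the principal additional verification required.
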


\begin{figure}[h!]
    \centering
\begin{tabular}{ccccccc}
  \begin{tabular}{|cccc|}
\cline{1-4}
9                       & 9 & 9                      & 9\\ \cline{2-3}
\multicolumn{1}{|c|}{9} & 3 & \multicolumn{1}{c|}{3} & 9\\
\multicolumn{1}{|c|}{9} & 3 & \multicolumn{1}{c|}{3} & 9\\ \cline{2-3}
9                       & 9 & 9                      & 9\\ \cline{1-4}
\end{tabular} &
&
\begin{tabular}{|cccc|}
\cline{1-4}
6                       & 12& 6                      & 12\\ \cline{2-3}
\multicolumn{1}{|c|}{12}& 3 & \multicolumn{1}{c|}{3} & 6 \\
\multicolumn{1}{|c|}{6} & 3 & \multicolumn{1}{c|}{4} & 12\\ \cline{2-3}
6                       & 6 & 12                     & 12\\ \cline{1-4}
\end{tabular}&
&
  \begin{tabular}{|cccc|}
\cline{1-4}
5                       & 15 & 5                      & 15\\ \cline{2-3}
\multicolumn{1}{|c|}{15}& 3  & \multicolumn{1}{c|}{3} & 5 \\
\multicolumn{1}{|c|}{5} & 3  & \multicolumn{1}{c|}{5} & 15\\ \cline{2-3}
5                       & 5  & 15                     & 15\\ \cline{1-4}
\end{tabular} &
&
\begin{tabular}{|cccc|}
\cline{1-4}
6                       & 6 & 6                      & 6 \\ \cline{2-3}
\multicolumn{1}{|c|}{6} & 4 & \multicolumn{1}{c|}{4} & 6 \\
\multicolumn{1}{|c|}{6} & 4 & \multicolumn{1}{c|}{4} & 6 \\ \cline{2-3}
6                       & 6 & 6                      & 6 \\ \cline{1-4}
\end{tabular}\\
&&&&&&\\
\end{tabular}
    \begin{tabular}{ccccc}
  \begin{tabular}{|cccc|}
\cline{1-4}
4                       & 8 & 4                      & 8 \\ \cline{2-3}
\multicolumn{1}{|c|}{8} & 4 & \multicolumn{1}{c|}{4} & 4 \\
\multicolumn{1}{|c|}{4} & 4 & \multicolumn{1}{c|}{8} & 8 \\ \cline{2-3}
4                       & 4 & 8                      & 8 \\ \cline{1-4}
\end{tabular} &
&
\begin{tabular}{|cccc|}
\cline{1-4}
5                       & 5 & 5                      & 5 \\ \cline{2-3}
\multicolumn{1}{|c|}{5} & 5 & \multicolumn{1}{c|}{5} & 5 \\
\multicolumn{1}{|c|}{5} & 5 & \multicolumn{1}{c|}{5} & 5 \\ \cline{2-3}
5                       & 5 & 5                      & 5 \\ \cline{1-4}
\end{tabular}&
&
\begin{tabular}{|cccc|}
\cline{1-4}
4                       & 4 & 4                      & 4 \\ \cline{2-3}
\multicolumn{1}{|c|}{6} & 6 & \multicolumn{1}{c|}{6} & 3 \\
\multicolumn{1}{|c|}{3} & 12& \multicolumn{1}{c|}{12}& 6 \\ \cline{2-3}
4                       & 4 & 4                      & 4 \\ \cline{1-4}
\end{tabular}
\end{tabular}
    \caption{Hyperbolic weights}
    \label{hyperbolic-weights}
\end{figure}

Before proving this result, we recall the following criterion that characterize (Kobayashi-)hyperbolicity considering Brody hyperbolicity for induced orbifold structures (see \cite[Theorem 5.5]{darondeau_quasi-positive_2020}).

\begin{thm}[Orbifold Brody’s criterion]\label{Brody-orbi}
Consider a smooth orbifold pair $(X,\Delta)$ with $\Delta=\sum\left(1-\frac{1}{m_i}\right)\Delta_i$. 
For a subset $I$ of $\{0,\dots,d\}$,
let $X_I= \bigcap_{i\in I} \Delta_i$,
and let $\Delta_{\overline{I}} =\sum_{j\notin I} \left(1-\frac{1}{m_j}\right)\Delta_j\cap X_I$. If all pairs
$(X_I , \Delta_{\overline{I}})$ are Brody-hyperbolic, then the pair $(X, \Delta)$ is (Kobayashi-)hyperbolic.
\end{thm}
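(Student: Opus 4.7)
The natural approach is to apply the orbifold Brody criterion \cref{Brody-orbi} to $(Y,\Delta)$. Since $\lceil \Delta\rceil$ is a simple normal crossings divisor on $Y$ and, by construction of the arrangement, no three of its components meet at a common point, the strata $Y_I=\bigcap_{i\in I}L_i$ are nonempty only for $|I|\leq 2$, and for $|I|=2$ they are isolated transverse intersection points (trivially hyperbolic). The criterion therefore reduces the proof to two verifications:
\begin{enumerate}
    \item for each component $L$ of $\lceil\Delta\rceil$, the induced orbifold curve $(L,\Delta_{\overline{I}}|_L)$ is Brody-hyperbolic;
    \item the full pair $(Y,\Delta)$ is Brody-hyperbolic.
\end{enumerate}

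\textbf{Curve strata.} On the blown-up surface $Y$, each component of $\lceil\Delta\rceil$ is a $\mathbb{P}^1$ meeting exactly three other components: the exceptional divisor $L_{0m}$ meets the three strict transforms $L_{mj}$ (for $j\neq m$), while each strict transform $L_{ij}$ meets $L_{0i}$, $L_{0j}$ and the opposite edge $L_{kl}$ with $\{i,j,k,l\}=\{1,2,3,4\}$. The induced orbifold on $L$ is thus $\left(\mathbb{P}^1,\sum_{a=1}^3 (1-1/\nu_a)\{p_a\}\right)$, whose hyperbolicity, by \cref{dim1-orbi}, is equivalent to $\tfrac{1}{\nu_1}+\tfrac{1}{\nu_2}+\tfrac{1}{\nu_3}<1$. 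A direct case-by-case check over the ten components and the seven weight systems of \cref{hyperbolic-weights} confirms the inequality; for example, in the first table each $L_{0m}$ receives neighbouring weights $(9,9,9)$ and each $L_{ij}$ receives $(3,3,9)$, giving sums $1/3$ and $7/9$ respectively.

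\textbf{Ambient stratum.} This is the substantive step. The weight systems of \cref{hyperbolic-weights} are precisely those identified by Deligne--Mostow and Holzapfel \cite[Section 4.11]{holzapfel_ball_1998} for which $(Y,\Delta)$ admits a smooth Galois $\Delta$-étale covering $\pi:Z\to Y$ with $Z$ a compact ball quotient $\mathbb{B}^2/\Gamma$. Since compact ball quotients have ample cotangent bundle, \cref{ample-cover-ample-mod-delta} yields that $(Y,\Delta)$ has ample cotangent bundle modulo $\lceil\Delta\rceil$, and \cref{ample-modulo-implies-hyp} then furnishes the desired Brody-hyperbolicity.

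\textbf{Main obstacle.} The combinatorial verification for the curve strata is routine once the intersection pattern on $Y$ is understood, and the zero-dimensional strata are automatic. The real difficulty is the ambient case, which relies on the nontrivial Deligne--Mostow/Holzapfel classification guaranteeing that these specific weight signatures come from compact arithmetic ball quotients; without that input one would need an alternative route to ampleness modulo boundary (or a direct entire-curve argument) for $(Y,\Delta)$.
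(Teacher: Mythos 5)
There is a fundamental mismatch: you have not proved the statement in question. The statement is \cref{Brody-orbi} itself — the orbifold Brody criterion, asserting that Brody-hyperbolicity of all the strata pairs $(X_I,\Delta_{\overline I})$ implies Kobayashi-hyperbolicity of $(X,\Delta)$. Your argument instead \emph{applies} \cref{Brody-orbi} as a known tool and then verifies its hypotheses for the Deligne--Mostow line arrangement; that is a proof of \cref{Deligne-Mostow-hyp} (and indeed it matches the paper's proof of that proposition almost step for step: ball-quotient $\Delta$-étale cover, ampleness of $\Omega_Z$, \cref{ample-cover-ample-mod-delta}, \cref{ample-modulo-implies-hyp} for the ambient stratum, and the degree computation of \cref{dim1-orbi} for the curve strata). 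As a proof of \cref{Brody-orbi} it is circular: the very first sentence invokes the theorem to be established.

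For the record, the paper does not prove \cref{Brody-orbi} either; it is recalled from \cite[Theorem 5.5]{darondeau_quasi-positive_2020}. An actual proof requires a genuinely different kind of argument, of Brody reparametrization type: assuming the orbifold Kobayashi pseudo-distance degenerates, one extracts a sequence of orbifold disks $\D\to(X,\Delta)$ with derivatives blowing up, reparametrizes to obtain a nonconstant entire limit curve, and then must control the limit — either it avoids being contained in $\lceil\Delta\rceil$ and is an orbifold entire curve in $(X,\Delta)$ (using a closedness statement for orbifold morphisms under locally uniform limits, in the spirit of \cref{morph-orbi-ferme}), or it falls into some stratum $X_I$, where a multiplicity analysis shows it is an orbifold entire curve for the induced pair $(X_I,\Delta_{\overline I})$; either way one contradicts the assumed Brody-hyperbolicity of the strata. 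None of this limiting analysis appears in your proposal, so as it stands there is no proof of the stated theorem.
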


\begin{rmq}
    We recall that an orbifold pair $(X,\Delta)$ is said to be \emph{Brody-hyperbolic} if it does not admit any non-constant orbifold entire curve $\C \to (X,\Delta)$.
\end{rmq}

\begin{proof}[Proof of \cref{Deligne-Mostow-hyp}]
To show that the orbifold pair is hyperbolic, we apply \cref{Brody-orbi}. 

The first step is to prove that $(Y,\Delta)$ is Brody-hyperbolic.
By \cite[Section 4.11]{holzapfel_ball_1998}, each case of \cref{hyperbolic-weights} corresponds to compact ball quotient
surfaces. Hence there exists a smooth quotient $X=\mathbb{B}/\Gamma$, where $\mathbb{B}$ denotes the unit ball, such that $X\to (Y,\Delta)$ is $\Delta$-étale. Then the cotangent bundle $\Omega_X$ is ample and so  $(Y,\Delta)$ has an ample cotangent bundle modulo $\Delta$ (see \cref{ample-cover-ample-mod-delta}). 
And so by \cref{ample-modulo-implies-hyp}, $(Y,\Delta)$ is Brody-hyperbolic.

The second step is to prove that each line $L_{ij}$ endowed with the induced orbifold structure is again Brody-hyperbolic. We are back to the study of orbifold curves, so it suffies to look at the degree of the orbifold canonical bundle (see \cref{dim1-orbi}).
By an easy computation, one can show that computing the degree of orbifold canonical bundle of $L_{ij}$ endowed with the induced orbifold structure returns to compute $K_{ij}=-2+ \sum_{k,l} \left(1-\frac{1}{\nu_{kl}}\right)$ for all pairs $(k,l)$ such that $L_{kl}$ intersects $L_{ij}$, and so one can see that $K_{ij}>0$. 

This conclude the proof that $(Y,\Delta)$ is hyperbolic for each weights listed in \cref{weights-table}.
\end{proof}

\section{Compactness result on the subset of surjective orbifold morphisms}

In this section, we prove a relative compactness result on $\Hol((X,\Delta_X),(Y,\Delta_Y))$, the set of holomorphic orbifold morphisms between two orbifold pairs $(X,\Delta_X)$ and $(Y,\Delta_Y)$, and more specifically a compactness result on $\Sur((X,\Delta_X),(Y,\Delta_Y))$, the set of surjective holomorphic orbifold morphisms from $(X,\Delta_X)$ to $(Y,\Delta_Y)$.

\begin{thm}\label{Compactness-hol}
Let $(X,\Delta_X)$ be an orbifold pair with $\lfloor\Delta_X\rfloor=\emptyset$. Let $(Y,\Delta_Y)$ be a compact orbifold. If $(Y,\Delta_Y)$ is hyperbolic, then $\Hol\left(\left(X,\Delta_X\right),\left(Y,\Delta_Y\right)\right)$ is relatively compact in $\Hol(X,Y)$.
\end{thm}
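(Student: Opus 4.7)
The plan is to adapt Kobayashi's classical normal families argument for compact hyperbolic targets. The key idea is to exploit the orbifold distance-decreasing property (\cref{distance-decreasing-orbi}) to obtain equicontinuity of the family $\mathcal{F}=\Hol\bigl((X,\Delta_X),(Y,\Delta_Y)\bigr)$, then to conclude via Arzelà--Ascoli together with the fact that $\Hol(X,Y)$ is closed inside $C(X,Y)$ for the compact-open topology.

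First I would prove that the orbifold Kobayashi pseudo-distance $d_{(Y,\Delta_Y)}$, which is defined on all of $Y$ since $\lfloor\Delta_Y\rfloor=\emptyset$, induces the original topology of $Y$. By hyperbolicity, $d_{(Y,\Delta_Y)}$ is a genuine distance; if one further shows that it is continuous on $Y\times Y$, then, $Y$ being compact, the identity $(Y,\text{orig.})\to (Y,d_{(Y,\Delta_Y)})$ is a continuous bijection onto a Hausdorff space, hence a homeomorphism. Upper semi-continuity of $d_{(Y,\Delta_Y)}$ is immediate from its definition via chains of orbifold disks; lower semi-continuity would come from a Royden-type local argument showing that $d_{(Y,\Delta_Y)}$ is bounded below by a Hermitian distance in some neighborhood of each point.

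Next I would establish equicontinuity of $\mathcal{F}$. By \cref{distance-decreasing-orbi},
\[
d_{(Y,\Delta_Y)}\bigl(f(x),f(x')\bigr)\leq d_{(X,\Delta_X)}(x,x') \qquad \text{for every } f\in\mathcal{F},
\]
so it suffices to show that $d_{(X,\Delta_X)}$ is locally arbitrarily small on $X$. In local coordinates $(z_1,\dots,z_n)$ at $x_0\in X$ adapted to $\lceil\Delta_X\rceil$, with the components of $\lceil\Delta_X\rceil$ through $x_0$ of multiplicities $m_1,\dots,m_k$ (all finite because $\lfloor\Delta_X\rfloor=\emptyset$), the explicit family
\[
\zeta \longmapsto \bigl(a_1\zeta^{m_1},\dots,a_k\zeta^{m_k},a_{k+1}\zeta,\dots,a_n\zeta\bigr)
\]
consists of orbifold disks $\D\to(X,\Delta_X)$ whose images cover a full neighborhood of $x_0$ with Poincaré length tending to $0$ as the parameters $a_i$ shrink.

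With equicontinuity in hand, Arzelà--Ascoli applied on an exhaustion of $X$ by compact subsets gives relative compactness of $\mathcal{F}$ in $C(X,Y)$; since local uniform limits of holomorphic maps are holomorphic, $\Hol(X,Y)$ is closed in $C(X,Y)$, and the desired conclusion follows. I expect the main obstacle to be the continuity of $d_{(Y,\Delta_Y)}$ near $\lceil\Delta_Y\rceil$: the Royden-type lower bound demands enough orbifold disks ramified with the prescribed multiplicity over each boundary component, and these ramification constraints make this step noticeably more delicate than in the classical non-orbifold setting.
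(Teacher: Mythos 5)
Your proposal follows essentially the same Arzelà--Ascoli strategy as the paper: the distance-decreasing property (\cref{distance-decreasing-orbi}) gives equicontinuity, compactness of $Y$ gives pointwise relative compactness, and closedness of $\Hol(X,Y)$ inside $\mathcal{C}(X,Y)$ upgrades the conclusion. The step you flag as delicate --- continuity of the orbifold Kobayashi pseudo-distance away from the infinite-multiplicity locus, which is what makes it compatible with the analytic topology and makes the sets $d_{(X,\Delta_X),x}^{-1}([0,\varepsilon[)$ neighborhoods --- is exactly what Campana and Winkelmann prove in \cite{campana_brody_2009}, the reference the paper cites, so rather than re-deriving it via a Royden-type lower bound and explicit ramified disks one can simply invoke that result.
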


\begin{proof}
    By assumption, $(Y,\Delta_Y)$ is hyperbolic. Then the orbifold Kobayashi pseudo-distance $ d_{(Y,\Delta_Y)}$ is a distance on $Y \backslash \lfloor \Delta_Y\rfloor$. Since $(Y,\Delta_Y)$ is compact, there is no component with infinite multiplicities, \ie $\lfloor\Delta_Y\rfloor=\emptyset$. So that $Y$ is a compact metric space. By \cref{distance-decreasing-orbi}, each map $f\in \Hol((X,\Delta_X),(Y,\Delta_Y))$ is distance-decreasing \ie
    \[\forall (x,y)\in X^2 \quad d_{(Y,\Delta_Y)} (f(x),f(y))\leq d_{(X,\Delta_X)}(x,y).\]
    For all $x\in X$ and all $\varepsilon>0$, we define the set \[\{y\in X \,|\, d_{X}(x,y)<\varepsilon\}= d_{(X,\Delta_X),x}^{-1}\left([0,\varepsilon[\right).\]
    Since this set is an inverse image of an open set by a continuous map on $X\backslash \lfloor\Delta_X\rfloor=X$ (see \cite{campana_brody_2009}), this subset is a neighborhood of $x\in X$.
    Then, for a fixed $x\in X$ and $y\in d_{(X,\Delta_X),x}^{-1}([0,\varepsilon[)$, 
    \[d_{(Y,\Delta_Y)} (f(x),f(y))\leq d_{(X,\Delta_X)}(x,y)<\varepsilon.\]
    So, $\Hol((X,\Delta_X),(Y,\Delta_Y))$ is equicontinous at every point $x \in X$.

Since $\{f(x)\in Y\, | \, f \in \F \}\subset Y$, with $Y$ a compact variety, this subspace is relatively compact in $Y$.

By the Arzela-Ascoli Theorem (see \cite[Theorem~(1.3.1)]{kobayashi_hyperbolic_1998}), $\Hol((X,\Delta_X),(Y,\Delta_Y))$ is relatively compact in $\mathcal{C}(X,Y)$. 
Since $\Hol((X,\Delta_X),(Y,\Delta_Y))\subset \Hol(X,Y)$ and since the latter is closed inside $\mathcal{C}(X,Y)$, $\Hol((X,\Delta_X),(Y,\Delta_Y))$ is relatively compact in $\Hol(X,Y)$.
\end{proof}

We recall the result proved by Campana-Winkelmann in \cite{campana_brody_2009}, which establishes the closedness property of $\Hol((X, \Delta_X), (Y, \Delta_Y))$.

\begin{prop}[See Proposition~7 in \cite{campana_brody_2009}]\label{morph-orbi-ferme}
Let $(X,\Delta_X)$ and $(Y,\Delta_Y)$ be two orbifolds. Let $f_n:(X, \Delta_X)
\to (Y, \Delta_Y)$
be a sequence of orbifold morphisms. Assume that $(f_n)$, regarded as a sequence of holomorphic
maps from $X$ to $Y$, converges locally uniformly to a holomorphic map
$f: X \to Y$.

Then either $f(X) \subset \lceil \Delta_Y\rceil$ or $f$ is an orbifold morphism 
from $(X,\Delta_X)$ to $(Y,\Delta_Y)$.
\end{prop}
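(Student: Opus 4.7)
Assume $f(X) \not\subset \lceil \Delta_Y \rceil$, so condition (1) of \cref{def-orb-map} is automatic; the task is to verify the multiplicity condition (2). Fix irreducible divisors $E \subset X$ and $D \subset Y$ with $f(E) \subset D$, write $f^* D = t_{E,D} E + R$ with $R$ not containing $E$, and choose a generic point $p \in E$ at which $X$ and $E$ are smooth, $E$ is the only component of $\lceil \Delta_X \rceil$ meeting a neighborhood of $p$, and $f(p)$ is a smooth point of $D$ lying on no other component of $\lceil \Delta_Y \rceil$. Let $d = \dim X$ and pick local coordinates $(v, u_1, \ldots, u_{d-1})$ on a polydisk $U \ni p$ with $E \cap U = \{v = 0\}$, together with a coordinate $w_1$ near $f(p)$ with $D$ cut out locally by $w_1 = 0$. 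By definition of $t_{E,D}$, after shrinking $U$ we may write $f^* w_1 = v^{t_{E,D}} g(v, u)$ with $g$ nowhere vanishing on $U$.

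The core of the argument is Hurwitz's theorem applied to a transverse slice. Fix a small $u_0$ and consider the disk $T_{u_0} = \{(v, u_0) : |v| < \varepsilon\}$. The restriction $f^* w_1|_{T_{u_0}}$ has a single zero at $v = 0$ of order exactly $t_{E,D}$ for $\varepsilon$ small. Since $f_n \to f$ uniformly on $\bar U$, the functions $f_n^* w_1|_{T_{u_0}}$ converge uniformly to $f^* w_1|_{T_{u_0}}$, so Hurwitz's theorem yields, for $n$ large, zeros $q_1^{(n)}, \ldots, q_{\ell(n)}^{(n)}$ of $f_n^* w_1|_{T_{u_0}}$ clustered near $p$ with vanishing orders $a_k^{(n)}$ summing to $t_{E,D}$. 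Each $a_k^{(n)}$ decomposes as
\[
a_k^{(n)} \;=\; \sum_{E' \ni q_k^{(n)}} t_{n, E', D} \cdot \mult_{q_k^{(n)}}\bigl(E' \cdot T_{u_0}\bigr),
\]
the sum running over global irreducible components $E'$ of $f_n^* D$ passing through $q_k^{(n)}$. Since $q_k^{(n)} \in U$ and $U$ meets $\lceil \Delta_X \rceil$ only along $E$, every such $E'$ either equals $E$ (with $m_X(E') = m_X(E)$) or is not a component of $\lceil \Delta_X \rceil$ (with $m_X(E') = 1$); in either case $m_X(E') \leq m_X(E)$. The orbifold condition for $f_n$ then gives $t_{n, E', D} \geq m_Y(D)/m_X(E)$, and combined with $\mult_{q_k^{(n)}}(E' \cdot T_{u_0}) \geq 1$ we obtain $a_k^{(n)} \geq m_Y(D)/m_X(E)$. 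Summing over $k$ (with $\ell(n) \geq 1$) yields $t_{E,D} \geq m_Y(D)/m_X(E)$, \ie the required inequality $t_{E,D} \cdot m_X(E) \geq m_Y(D)$.

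The main subtlety, and the reason for choosing $p$ generic on $E$, is that the components $E'$ of $f_n^* D$ through the approximating zeros $q_k^{(n)}$ need not coincide with $E$; some care is needed to ensure that they do not contribute extra orbifold multiplicity, and genericity is precisely what guarantees that such stray $E'$ all satisfy $m_X(E') = 1$. A minor extra case arises when $m_Y(D) = \infty$: the orbifold condition for each $f_n$ then forces $m_X(E') = \infty$ for every relevant $E'$, so by genericity $E' = E$ and $m_X(E) = \infty$, and the inequality again holds trivially.
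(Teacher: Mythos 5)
The paper does not reprove this statement: it cites it as Proposition~7 of Campana--Winkelmann \cite{campana_brody_2009} and gives no argument of its own. So there is no ``paper's own proof'' to compare against; I can only assess your reconstruction on its merits.

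Your argument is essentially correct and is the natural Hurwitz-on-a-transverse-slice proof that one expects behind the closedness statement. The decomposition
$a_k^{(n)} = \sum_{E' \ni q_k^{(n)}} t_{n,E',D}\cdot \mult_{q_k^{(n)}}(E'\cdot T_{u_0})$
is valid once you know $T_{u_0} \not\subset E'$ (which follows automatically because $f_n^*w_1|_{T_{u_0}}\not\equiv 0$ for $n$ large, again by Hurwitz), and the key locality observation --- that after shrinking $U$ every component $E'$ of $f_n^* D$ through a zero $q_k^{(n)}$ either equals $E$ or has $m_X(E')=1$ --- is exactly what makes the orbifold multiplicity propagate to the limit. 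The handling of the case $m_Y(D)=\infty$ is also correct under the usual reading of the inequality over $\N\cup\{\infty\}$.

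Two small points you should tighten. First, you silently assume you can choose $p\in E$ with $f(p)$ a smooth point of $D$ (equivalently, that $D$ is Cartier near some/generic point of $f(E)$). This is automatic when $Y$ is smooth, or more generally whenever $f(E)\not\subset Y^{\mathrm{sing}}$; but the proposition as stated allows $(Y,\Delta_Y)$ to be an arbitrary normal orbifold, and if $f$ contracts $E$ into the singular locus the coordinate $w_1$ may not exist. One should either restrict to the case where $Y$ is smooth (which suffices for every application in the paper) or pass to a local equation for $D$ at a generic point of $f(E)$ via a resolution/valuation argument. Second, your extra hypothesis that $f(p)$ lie on no other component of $\lceil\Delta_Y\rceil$ is not actually needed: the Hurwitz argument tracks only the pullback of $D$, and the presence of other components $D'$ near $f(p)$ does not interfere. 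Removing that clause makes the choice of $p$ easier to justify in general.
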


This result allows us to prove the following compactness result for the set of surjective orbifold morphisms $\Sur((X,\Delta_X),(Y,\Delta_Y))$.

\begin{corol}\label{Sur-compact}
Let $(X,\Delta_X)$ be an orbifold pair with $\lfloor\Delta_X\rfloor=\emptyset$. Let $(Y,\Delta_Y)$ be a compact orbifold.  If $(Y,\Delta_Y)$ is hyperbolic, then $\Sur((X,\Delta_X),(Y,\Delta_Y))$ is compact.
\end{corol}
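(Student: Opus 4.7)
The plan is to identify $\Sur((X,\Delta_X),(Y,\Delta_Y))$ as a closed subset of the set $\Hol((X,\Delta_X),(Y,\Delta_Y))$, which is already known to be relatively compact in $\Hol(X,Y)$ by \cref{Compactness-hol}. Relative compactness of $\Sur$ is then immediate, so the whole point is to prove closedness: the limit of a locally uniformly convergent sequence of surjective orbifold morphisms must itself be a surjective orbifold morphism.

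First I would take a sequence $(f_n)$ in $\Sur((X,\Delta_X),(Y,\Delta_Y))$. By \cref{Compactness-hol}, up to extracting a subsequence, $f_n$ converges locally uniformly to some $f\in \Hol(X,Y)$. The dichotomy of \cref{morph-orbi-ferme} tells me that either $f(X)\subset \lceil\Delta_Y\rceil$, or $f$ is an honest orbifold morphism from $(X,\Delta_X)$ to $(Y,\Delta_Y)$. Since $\lceil\Delta_Y\rceil$ is a proper closed subset of $Y$, it suffices to rule out the first alternative by showing that $f$ is still surjective; this will force $f\in \Hol((X,\Delta_X),(Y,\Delta_Y))$ and hence $f\in \Sur((X,\Delta_X),(Y,\Delta_Y))$.

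To establish surjectivity of $f$, I would pick an arbitrary $y\in Y$ and choose preimages $x_n\in X$ with $f_n(x_n)=y$, which exist because each $f_n$ is surjective. Using compactness of $X$ (which comes for free in the situations of interest, such as the applications to pointed maps and automorphisms, and in any case underlies the standard Noguchi-type setup), I extract a subsequence $x_{n_k}\to x\in X$. Because $f_n\to f$ uniformly on the compact neighborhood of $x$ (or on $X$ itself in the compact case), the evaluation $f_{n_k}(x_{n_k})\to f(x)$, whence $f(x)=y$. Running $y$ over all of $Y$ yields $f(X)=Y$, so in particular $f(X)\not\subset \lceil\Delta_Y\rceil$ and the second alternative of \cref{morph-orbi-ferme} applies.

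The main obstacle is precisely the preimage-extraction argument of the previous paragraph: passing the surjectivity property to the limit under locally uniform convergence is not automatic for sequences of holomorphic maps, and requires the source-side compactness hinted at in the standing assumptions to ensure that the sequence $(x_n)$ of preimages of a given point $y\in Y$ admits an accumulation point in $X$. Once this is granted, combining the relative compactness from \cref{Compactness-hol} with the closedness just established shows that $\Sur((X,\Delta_X),(Y,\Delta_Y))$ is a closed subset of a compact set in $\Hol(X,Y)$, hence compact, which is the desired conclusion.
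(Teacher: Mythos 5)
Your argument traces the same route as the paper: relative compactness from \cref{Compactness-hol}, closedness of $\Sur$ inside $\Hol(X,Y)$, and then \cref{morph-orbi-ferme} to upgrade the surjective holomorphic limit to an orbifold morphism. Where you differ is the closedness step: the paper dispatches surjectivity of the limit by a single citation to \cite[Corollary~(5.3.5)]{kobayashi_hyperbolic_1998}, whereas you unfold this into a preimage-extraction argument. As you yourself note, that extraction needs the source $X$ to be compact (or the maps to be proper) so that a sequence $x_n\in f_n^{-1}(y)$ has an accumulation point; without such a hypothesis a locally uniformly convergent sequence of surjective maps can degenerate (for instance $z\mapsto[z/n]:\C\to\C/\Lambda$ converges on compacts to a constant). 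Since \cref{Sur-compact} as stated assumes only $\lfloor\Delta_X\rfloor=\emptyset$ and not compactness of $X$, your inlined argument formally has a hypothesis gap, though the paper's own citation to Kobayashi also quietly carries the same compactness/properness requirement on the source. Apart from this bookkeeping the two proofs coincide.
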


\begin{proof}
    By \cref{Compactness-hol}, $\Sur((X,\Delta_X),(Y,\Delta_Y))$ is relatively compact in $\Hol(X,Y)$. 

To conclude, it remains to show that $\Sur((X,\Delta_X),(Y,\Delta_Y))$ is closed in $\Hol(X,Y)$.

Let $(f_n)$ be a sequence of surjective orbifold morphisms from $(X,\Delta_X)$ to $(Y,\Delta_Y)$ which converges to $f\in \Hol(X,Y)$. Since the set of surjective morphisms is closed (see \cite[Corollary~(5.3.5)]{kobayashi_hyperbolic_1998}), $f$ is also surjective. Thus, $f(X)$ cannot be contained in one of the components of $\Delta_Y$. By \cref{morph-orbi-ferme}, we conclude that $f\in \Sur((X,\Delta_X),(Y,\Delta_Y))$, hence $\Sur((X,\Delta_X),(Y,\Delta_Y))$ is closed, which concludes the proof of the corollary.
\end{proof}

\section{Reduction of the study}

In this section, we will show how we can reduce the study to the case of finite orbifold maps between a variety $X$ and an orbifold pair $(Y,\Delta)$.

Let $(X,\Delta_X)$ and $(Y,\Delta_Y)$ be two orbifold pairs.
Let $\mathcal{F}$ be a connected family of orbifold morphisms $f:(X,\Delta_X)\to (Y,\Delta_Y)$.

First of all, we can consider an adapted cover $\pi_X:\widetilde{X}\to (X,\Delta_X)$, such adapted cover exist by \cite[Proposition~4.1.12]{lazarsfeld_positivity_2004-1}.
We are now reduced to study morphisms $\widetilde{f}:\widetilde{X}\to (Y,\Delta)$. We denote by $\widetilde{\F}$ the corresponding family.

Secondly, we apply the simultaneous Stein factorization to the family $\widetilde{\mathcal{F}}$. We recall this result, described in \cite{kobayashi_hyperbolic_1998} (see also \cite{horst_compact_1985}).

\begin{prop}[Corollary~(5.3.2)\cite{kobayashi_hyperbolic_1998}]
    Let $X$ and $Y$ be two connected complex spaces, and let $\F\subset \Hol(X,Y)$ be a subfamily with connected universal complex structure such that each $f\in \F$ is a proper map from X into Y. Then $\F$ admits a simultaneous factorization 
    \[f:X\overset{p}{\longrightarrow} X' \overset{f'}{\longrightarrow} Y, \quad f\in \F\]
    through a common complex space $X'$ so that $p$ is a proper surjective holomorphic map with connected fibers and $f'$ is a finite map. 
\end{prop}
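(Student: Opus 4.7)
The plan is to reduce to the individual Stein factorization and use the connectedness of $\F$ to glue the intermediate spaces.

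First, I would recall that for each individual $f \in \F$, since $f$ is proper, the classical Stein factorization gives a decomposition $f = f'_f \circ p_f$ where $p_f \colon X \to X_f$ is proper surjective with connected fibers and $f'_f \colon X_f \to Y$ is finite; concretely, $X_f = \mathrm{Specan}(f_\ast \mathcal{O}_X)$. The issue is that the intermediate space $X_f$ depends \emph{a priori} on $f$, and one needs all these $X_f$'s to coincide with a single $X'$. To formalize the simultaneity, I would work over the parameter space $\F$: consider the evaluation
\[
\mathrm{ev} \colon X \times \F \longrightarrow Y \times \F, \qquad (x,f) \longmapsto (f(x), f),
\]
which is a proper holomorphic map of complex spaces over $\F$. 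Applying the relative Stein factorization to $\mathrm{ev}$ produces a factorization $\mathrm{ev} = g \circ q$ through a complex space $Z$ over $\F$, where $q$ is proper with connected fibres and $g$ is finite; the fibre of $Z \to \F$ over a point $f$ recovers the individual Stein factorization $X_f$ of $f$.

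The heart of the argument is to show that the family $Z \to \F$ is trivial, \ie isomorphic to $X' \times \F \to \F$ for a single complex space $X'$, and that the map $q$ is of the form $p \times \mathrm{id}_\F$. For this I would use that relative Stein factorization is stable under deformation: the sheaf $\mathrm{ev}_\ast \mathcal{O}_{X \times \F}$ is flat over $\F$ and its fibres are the algebras defining the $X_f$'s. Since a proper flat morphism of complex spaces with reduced and connected fibres of constant invariants is topologically locally trivial, the connected components of fibres of $\mathrm{ev}$ give a locally constant system of equivalence relations on $X$ indexed by $\F$. The connectedness of $\F$ then forces this equivalence relation to be constant: any two points $x, x' \in X$ identified under $p_{f_0}$ for some $f_0 \in \F$ remain identified under $p_f$ for every $f \in \F$ reachable by a continuous path. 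This lets me define $X'$ as the quotient of $X$ by the common equivalence relation ``$x \sim x'$ iff $p_{f}(x) = p_{f}(x')$ for some (equivalently every) $f \in \F$,'' and $p$ as the quotient map, which inherits from any $p_{f}$ the properties of being proper, surjective and of having connected fibres.

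Finally, since every $f \in \F$ is constant on the fibres of $p$ (they are contained in the connected fibres of $p_f$), each $f$ descends to a holomorphic map $f' \colon X' \to Y$. The finiteness of $f'$ is inherited from the finiteness of $f'_f \colon X_f \to Y$ once we identify $X' \cong X_f$. I expect the main obstacle to be the rigidity step in the second paragraph: showing that the Stein factorization is genuinely locally constant in $f$, \ie that the connected components of $f^{-1}(y)$ do not undergo any jump as $f$ varies in $\F$. This boils down to an upper semicontinuity/constancy argument for the number and structure of connected components of fibres in a flat proper family, and the connectedness of $\F$ is exactly what promotes local constancy to global constancy.
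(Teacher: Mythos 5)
The paper does not prove this proposition: it is quoted verbatim from Kobayashi's book (Corollary~(5.3.2) of \cite{kobayashi_hyperbolic_1998}, which in turn references \cite{horst_compact_1985}), so there is no ``paper's proof'' to compare against. What can be assessed is whether your outline would yield a correct proof, and there the answer is no, because the step you flag as the ``main obstacle'' is in fact the entire content of the theorem and the mechanism you propose for it is essentially circular. You assert that $\mathrm{ev}_\ast\mathcal{O}_{X\times\F}$ is flat over $\F$, but this is not automatic for a family of proper maps from a fixed source, and it is morally equivalent to the conclusion you want: if the Stein factorizations $X_f$ genuinely jumped as $f$ varied, the direct image sheaf would fail to be flat. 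Nothing in the hypotheses (each $f$ proper, $\F$ connected with universal complex structure) hands you flatness for free. In addition, even granting flatness, relative Stein factorization need not commute with base change to the fibres without further hypotheses, and the appeal to ``a proper flat morphism with reduced connected fibres of constant invariants is topologically locally trivial'' invokes a vaguely stated Ehresmann-type principle that does not hold in the generality required here.

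The actual proof in Horst/Kobayashi bypasses flatness entirely. One works with the analytic equivalence relation $R_f=\{(x,x')\in X\times X\,:\, x,x' \text{ lie in the same connected component of } f^{-1}(f(x))\}$ whose quotient is $X_f$, and shows directly that $R_f\subset X\times X$ is a locally constant function of $f$ by exploiting properness of $f$, the analytic structure of $\F$, and a two-sided semicontinuity argument for connected components of fibres. Connectedness of $\F$ then upgrades local constancy to global constancy. Your closing paragraph (descending $f$ to $X'$, inheriting properties from any individual $p_f$, finiteness of $f'$) is fine once this rigidity is actually established, but as written the proposal replaces the hard step with an unsupported flatness claim rather than proving it.
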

In our case, we use the following notations:
\[\xymatrix{
    & X' \ar[rd]^{f'}& \\
    \widetilde{X}\ar[rr]_{\widetilde{f}}\ar[ru]^p&  & (Y,\Delta_Y)
  }
  \]
Now, the problem is that $f':X'\to Y$ may not be an orbifold map anymore. 

\begin{ex}
    Let $E$ be an elliptic curve and $C$ a hyperelliptic curve. Let $i:C\to C$ an involution, such that $\bigslant{C}{<i>}\simeq \PP^1$. Let $t\in E$ a point of order 2. Consider the quotient $X$,  defined by $E\times C$ where one identifies $(x,y)$ with $(x+t, i(y))$. The map $X \to \bigslant{C}{<i>}= \PP^1$ is an elliptic fibration with $2g+2$ multiple fibers. 
    Then the Stein factorization is given by the following diagram, where $\mathrm{Br}(i)$ denotes the branch locus of $i$.
    \[\xymatrix{
    & \PP^1 \ar[rd]^{\id}& \\
    X \ar[ru]^f\ar[rr]_f & &\left(\PP^1,\frac{1}{2}\mathrm{Br}(i)\right)
  }\]
  The identity map is clearly not an orbifold map.
\end{ex}

To solve this problem, we add an orbifold structure on $X'$. Consider the orbifold base of the fibration $p$ (see \cite[(part 1.2.2.)]{campana_orbifolds_2004}), which does not depend on the map $f:X\to (Y,\Delta)$ by construction of the simultaneous Stein factorization. 

\begin{lem}
    Let $f:X\to (Y,\Delta)$ be an orbifold map between $X$ a variety and $(Y,\Delta)$ an orbifold pair. We denote by  \[f:X\overset{p}{\longrightarrow} X' \overset{f'}{\longrightarrow} Y\] the Stein factorization of $f$ and we denote by $(X', \Delta_p)$ the orbifold base of the fibration $p$.
    Then $f':(X',\Delta_p)\to (Y,\Delta)$ is an orbifold map.
\[\xymatrix{
    & (X',\Delta_p) \ar[rd]^{f'}& \\
    X \ar[ru]^p\ar[rr]_f & &\left(Y,\Delta\right)
  }\]
\end{lem}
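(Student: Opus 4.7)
The plan is to verify both clauses of \cref{def-orb-map} for the map $f'\colon (X',\Delta_p) \to (Y,\Delta)$. The first clause, $f'(X') \not\subset \lceil \Delta \rceil$, comes for free: since $p$ is surjective, $f'(X') = f(X)$, and the hypothesis that $f$ is an orbifold morphism gives $f(X) \not\subset \lceil \Delta\rceil$.

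For the second clause, fix a prime divisor $D \subset Y$ and a prime divisor $E' \subset X'$ appearing in ${f'}^{\ast}D$, and write ${f'}^{\ast}D = t_{E',D}\, E' + R'$ with $R'$ effective and not containing $E'$ in its support. The goal is to show $t_{E',D}\cdot m_{\Delta_p}(E') \geq m_{\Delta}(D)$. By construction of the orbifold base in Campana's (non-classical) sense, $m_{\Delta_p}(E')= \min_E m_{E,E'}$, where $E$ runs over the irreducible components of $p^{-1}(E')$ mapping surjectively onto $E'$ and $m_{E,E'}$ denotes the multiplicity of $E$ in $p^{\ast}E'$.

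The key step is a multiplicity computation upstairs. Since $f = f'\circ p$,
\[ f^{\ast}D = p^{\ast} {f'}^{\ast}D = t_{E',D}\, p^{\ast}E' + p^{\ast}R'. \]
For any $E$ as above, $E$ appears in $p^{\ast}E'$ with multiplicity $m_{E,E'}$ and does not appear in the support of $p^{\ast}R'$, since otherwise $p(E)=E'$ would lie in $\Supp(R')$, contradicting the choice of $R'$. Hence $E$ appears in $f^{\ast}D$ with multiplicity exactly $t_{E',D}\cdot m_{E,E'}$. Applying the orbifold condition for $f$ (and using $m_X(E)=1$, since $X$ carries no orbifold structure) gives $t_{E',D}\cdot m_{E,E'} \geq m_{\Delta}(D)$, and taking the minimum over admissible $E$ yields the desired inequality.

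I do not expect a serious obstacle: the argument is essentially bookkeeping of multiplicities once the definition of the orbifold base is spelled out. The only point to state carefully is the disjointness of $E$ from $\Supp(p^{\ast}R')$, which is what makes the multiplicity factor cleanly as $t_{E',D}\cdot m_{E,E'}$ and in turn allows passing to the minimum to recover $m_{\Delta_p}(E')$ on the left-hand side.
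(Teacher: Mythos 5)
Your proposal is correct and follows essentially the same route as the paper's proof: express $f^{*}D$ as $p^{*}(t_{E',D}E' + R')$, identify the coefficient of a component $E$ of $p^{-1}(E')$ dominating $E'$ as $t_{E',D}\cdot m_{E,E'}$, invoke the orbifold condition on $f$, and pass to the infimum to recover the orbifold base multiplicity. The only difference is that you explicitly justify why such an $E$ cannot lie in $\Supp(p^{*}R')$, a point the paper leaves implicit.
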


\begin{proof}
    First of all, $f'(X')$ is not in the support of $\Delta$ because $f'(X')=f(X)$, where $f$ is an orbifold map and so $f(X)\not\subset \lceil \Delta\rceil$.

The second step is to show that for every irreducible divisors $D\subset Y$ and $E \subset X'$ such that $f'^\ast(D)=t_{E,D} E +\widetilde{E}$, with $\widetilde{E}$ an effective divisor of $X'$ not containing $E$, we have
        $t_{E,D} m_{X'}(E) \geq m_Y(D)$
        where $m_{X'}$ (resp. $m_Y$) denotes the orbifold multiplicity on $X'$ (resp. $Y$).
By definition of the orbifold base, the multiplicity $m_{X'}(E)$ is the infimum of all the multiplicity $m_i$ in the following pull-back
$p^\ast E= \sum m_i D_i +R$, with $R$ a $p$-exceptional divisor. 

Since $f=f'\circ p$, then we can compute it as follows.
\begin{align*}
    f^\ast \Delta_i &= (f'\circ p)^\ast \Delta_i\\
    &= p^\ast(t_{E,D} E +\widetilde{E})\\
    &= t_{E,D} p^\ast(E) +p^\ast \widetilde{E}\\
    &= t_{E,D} \sum m_i D_i + \widetilde{R}.
\end{align*}
Since $f$ is an orbifold map, the inequality $t_{E,D}m_i \geq m_Y(\Delta_i)$ holds for each $i$. Furthermore $m_{X'}=\inf(m_i)$, so we conclude.
\end{proof}

Now, the study is reduced to the case of finite morphisms between orbifold pairs.

Finally, let us consider an adapted cover $\pi_{X'}:\widehat{X}\to (X',\Delta_p)$. By definition of $\widehat{X}$ and construction of the simultaneous Stein factorization, the composed map $\widehat{X}\to (Y,\Delta)$ is finite.

We shall therefore assume that we study finite orbifold morphisms between a variety $X$ and an orbifold pair $(Y,\Delta)$.

\begin{rmq}
    All results mentioned in the introduction are stated in their more general form, involving morphisms between orbifold pairs. However, in the following pages, we will present them in the context described in this section.
\end{rmq}

\section{The evaluation morphism is an orbifold morphism}

In this section, we prove that the evaluation morphism is an orbifold morphism. This result was already proved by Bartsch–Javanpeykar–Rousseau \cite[Theorem 3.5.]{bartsch_weakly-special_2023} in the algebraic context. We propose an analytic proof. 

Let $X$ be a variety and $(Y,\Delta)$ be an orbifold pair, with $\Delta=\sum_i \left(1-\frac{1}{m_i}\right) \Delta_i$.

Let $\F\subset \Hol(X,(Y,\Delta))$ be a normal locally closed subset of holomorphic orbifold maps.
We define the evaluation morphism $ev$ by
\[\begin{array}{c|ccc}
    ev: & X \times \F & \longrightarrow & Y \\
     & (x,f) & \longmapsto & f(x)
\end{array}\]
Let $x$ be a point in $X$. We define the morphism $ev_x$, the evaluation at $x$, by
\[\begin{array}{c|ccc}
    ev_x : & \F & \longrightarrow & Y \\
     & f & \longmapsto & f(x)
\end{array}\]

\begin{thm}\label{ev-orbi}
The evaluation morphism \[\begin{array}{c|ccc}
    ev : & X \times \F & \longrightarrow & (Y,\Delta) \\
     & (x,f) & \longmapsto & f(x)
\end{array}\]
is an orbifold morphism.
\end{thm}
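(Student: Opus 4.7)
The plan is to verify the two defining conditions of \cref{def-orb-map} for $ev: X \times \F \to (Y, \Delta)$, viewing $X \times \F$ as a variety with trivial orbifold structure. The first condition, $ev(X \times \F) \not\subset \lceil \Delta \rceil$, is immediate: for any $f_0 \in \F$ the restriction $ev|_{X \times \{f_0\}}$ equals $f_0$, which is an orbifold morphism, so $f_0(X) \not\subset \lceil \Delta \rceil$.

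For the ramification inequality, fix an irreducible component $D$ of $\lceil \Delta \rceil$ with orbifold multiplicity $m := m_Y(D) \geq 2$ (the case $m=1$ being trivial), and let $E$ be an irreducible divisor of $X \times \F$ appearing in $ev^*(D)$ with coefficient $t_{E,D}$; we must prove $t_{E,D} \geq m$. I would first rule out the degenerate possibility that $E$ does not dominate $\F$ via the second projection $\pi_\F: X \times \F \to \F$. If $\pi_\F(E)$ had positive codimension in $\F$, a dimension count would force $E = X \times Z$ for some irreducible divisor $Z \subset \F$; but then every $f \in Z$ would satisfy $f(X) \subset D \subset \lceil \Delta \rceil$, contradicting the orbifold condition for $f \in \F$.

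So $E$ dominates $\F$. Next I would work near a generic point $(x_0, f_0) \in E$ chosen so that $X \times \F$ is smooth there, $E$ is locally principal with local equation $g$, and the morphism $\pi_\F|_E: E \to \F$ is a submersion at $(x_0, f_0)$. This last property invokes characteristic-zero generic smoothness (or its analytic analogue) applied to the dominant morphism of complex varieties $\pi_\F|_E$, and guarantees that the scheme-theoretic fiber $E \cap (X \times \{f_0\})$ is smooth, hence reduced, near $x_0$. Choosing a local equation $s = 0$ for $D$ near $y_0 := f_0(x_0)$ on $Y$, we can write $ev^*(s) = g^{t_{E,D}} \cdot h$ with $h$ non-vanishing at $(x_0, f_0)$.

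Restricting this equality to $X \times \{f_0\}$ yields $f_0^*(s) = g(\cdot, f_0)^{t_{E,D}} \cdot h(\cdot, f_0)$. Reducedness of the fiber at $x_0$ implies $g(\cdot, f_0)$ vanishes to order exactly $1$ along the irreducible component $E'$ of $f_0^{-1}(D)$ through $x_0$, so $f_0^*(s)$ vanishes to order exactly $t_{E,D}$ along $E'$, giving $t_{E',D} = t_{E,D}$. Applying the orbifold morphism inequality to $f_0$ (and using that $X$ carries the trivial orbifold structure, so $m_X(E') = 1$) yields $t_{E',D} \geq m$, and hence $t_{E,D} \geq m$, as required. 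The main obstacle is the generic smoothness step, which is the mechanism by which the multiplicity on a single slice propagates to the ambient space $X \times \F$; the remaining steps are bookkeeping from the definitions.
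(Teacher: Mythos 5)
Your proof takes essentially the same route as the paper's. Both arguments begin by ruling out the degenerate case $E = X \times Z$ (the paper phrases this as $\pi_\F(E) = \F$, showing that otherwise some $f$ would have $f(X) \subset \lceil\Delta\rceil$), then pass to a generic $f_0 \in \F$ where the fiber of $\pi_\F|_E$ is reduced, transfer the coefficient $t_{E,D}$ from $ev^*(D)$ to a coefficient of $f_0^*(D)$ via that reducedness, and finally invoke the orbifold ramification condition for the single map $f_0$. The only cosmetic difference is the reference you invoke for reducedness: you appeal to generic smoothness in characteristic zero, while the paper cites a Stacks Project lemma asserting reducedness of the generic fiber of a dominant morphism; these are interchangeable here. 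Your write-up spells out the local-equation bookkeeping ($ev^*(s) = g^{t_{E,D}} h$, restriction to the slice, exact order of vanishing along $E'$) a bit more explicitly than the paper, which computes a schematic intersection multiplicity directly, but the key mechanism — reducedness of the generic fiber of $\pi_\F|_E$ propagating the slice multiplicity to the ambient one — is identical.
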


\begin{proof}
First of all, let us show that $ev(X\times \F)\not\subset \lceil \Delta\rceil$.

When one has fixed the map $f\in \F$, we set \[ev(X,f)=\{f(X)\in Y\,|\, x \in X\}=f(X) \subset Y.\] 
Since each map $f \in \F$ is an orbifold morphism, then $f(X)\not\subset \lceil \Delta\rceil$.

So $ev(X,\F)=\bigcup_{f\in \F} ev(X,f) $ is not included in the support of $\Delta$.

    The second step is to show that for every irreducible divisors $D\subset Y$ and $E \subset X$ such that $ev^\ast(D)=t_{E,D} E +R$, with $R$ an effective divisor of $X$ not containing $E$, we have
        $t_{E,D} m_{X\times \F}(E) \geq m_Y(D)$
        where $m_{X\times\F}$ (resp. $m_Y$) denotes the orbifold multiplicity on $X\times \F$ (resp. $Y$).
Since there is no orbifold structure on $X\times \F$, it remains to show that 
\[ev^\ast (\Delta_i)= \sum_{j} n_{i,j} E_{i,j}, \text{ with } n_{i,j}\geq m_i,\]
where $\Delta_i\in\lceil \Delta\rceil$ and $m_i$ denotes the orbifold multiplicity of $\Delta_i$ in $\Delta$.

Let $\Delta_i \in \lceil \Delta\rceil$ and let $E_{i,j}$ be a component of $\lceil ev^\ast (\Delta_i)\rceil$, $E_{i,j}$ is a divisor on $X\times \F$. By simplicity, we denote $E_{i,j}$ by $E$ instead.

If there exists $D\subset \F$, a divisor on $\F$, such that $E=X\times D$, then for all $f\in D$ $f(X) \subset \Delta_i$ by construction. This situation is not allowed since $\F$ is a subset of the orbifold morphisms. Then, we conclude that
\[\pi_\F\left(E\right)=\F,\]
 where $\pi_\F: X\times \F\to \F$ is the projection along $\F$.

Since the set of singular points of $\F$ is a strict subvariety, we can consider $f \in \F$ to be a generic point, such that $\pi_\F^{-1}(f)$ intersects transversally $E$ .

$\pi_\F|_{E}:E \longrightarrow \F$ is a dominant fibration, 
hence the generic fibre of $\pi_{\F}$ is reduced (see \cite[Lemma 37.26.4]{the_stacks_project_authors_stacks_2018}), and the intersection between $\pi_{\F}^{-1}(f)$ and $E$ is of multiplicity 1.
Then the multiplicity of the schematic intersection is
\[\mult\left(\pi_{\F}^{-1}(f)\cap (n_{i,j}E)\right) =n_{i,j}.\]
On the other hand, this multiplicity equals the multiplicity of the pull-back $f^\ast(\Delta_i)$, \ie
\[\mult\left(\pi_{\F}^{-1}(f)\cap (n_{i,j}E)\right) =\mult(f^\ast(\Delta_i)).\]
Since $f\in \F$ is an orbifold morphism, 
\[\mult(f^\ast(\Delta_i))\geq m_i.\]
Then, we conclude that 
\[\mult\left(\pi_{\F}^{-1}(f)\cap (n_{i,j}E)\right) =n_{i,j} \geq m_i.\]
\end{proof}

\section{Finiteness result for hyperbolic uniformizable orbifold pairs with semi-negative orbifold canonical bundle}

In this section, we will show a first result which generalizes the theorem proved by Noguchi \cite{noguchi_hyperbolic_1985} in the case of compact hyperbolic manifold with semi-negative first Chern class. This result leads to a metric hypothesis on the orbifold pair.

\begin{defi}\label{defi-unif}
    A smooth orbifold $(X,\Delta)$ is said to be \emph{uniformizable} if there exists an étale $\Delta$-adapted covering $Z\overset{\pi}{\to} (X,\Delta)$, where $Z$ is a smooth variety and $\pi$ is said to be \emph{étale} if it ramifies only over $\Delta$.
\end{defi}

\begin{prop}\label{zero-dim-unif+semipos}
       Let $X$ be a projective variety and $(Y,\Delta)$ be a smooth projective orbifold pair. If $(Y,\Delta)$ is hyperbolic and uniformizable and if the first orbifold Chern class $c_1((Y,\Delta)):=-c_1(K_Y+\Delta)$ is semi-negative, \ie $c_1\left(\left(Y,\Delta\right)\right)\leq 0$, then $\Sur(X,(Y,\Delta))$ is zero-dimensional.
\end{prop}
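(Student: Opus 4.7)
The plan is to follow Noguchi's original strategy and argue by contradiction. Suppose that some irreducible component $\mathcal{F}\subset \Sur(X,(Y,\Delta))$ is positive-dimensional; the goal is to lift the whole configuration to the uniformizing cover of $(Y,\Delta)$, where the classical Miyaoka--Mori rigidity statement in the smooth projective (boundary-free) setting applies and forces a contradiction.

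First, by the uniformizability hypothesis (see \cref{defi-unif}), I would fix an étale $\Delta$-adapted cover $\pi:Z\to (Y,\Delta)$ with $Z$ smooth projective. Since $\pi$ ramifies exactly over $\Delta$ with the prescribed multiplicities, the ramification formula gives $K_Z=\pi^\ast(K_Y+\Delta)$, so the semi-negativity hypothesis $c_1((Y,\Delta))\leq 0$ translates into $K_Z$ being numerically semi-positive. Moreover hyperbolicity passes from $(Y,\Delta)$ to $Z$ by exactly the same mechanism as in the proof of \cref{Deligne-Mostow-hyp} (the Kobayashi pseudo-distance lifts along $\Delta$-étale covers, see also \cref{rmq-tangent-orbi}).

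Next I would apply \cref{ev-orbi}: the evaluation $ev:X\times \mathcal{F}\to (Y,\Delta)$ is an orbifold morphism. The orbifold ramification condition on $ev$ along each component of $\Delta$ is precisely what is needed to lift $ev$ across the $\Delta$-étale cover $\pi$; taking an irreducible component of the fiber product $(X\times\mathcal{F})\times_{(Y,\Delta)} Z$ and normalizing produces a finite étale cover $W\to X\times \mathcal{F}$ together with a \emph{holomorphic} lift $\widetilde{ev}:W\to Z$ that makes the obvious diagram commute. Fibering over a lift $\widetilde{\mathcal{F}}$ of $\mathcal{F}$, one then obtains a positive-dimensional family of surjective holomorphic maps $\widetilde{X}\to Z$ from some fixed smooth projective $\widetilde{X}$.

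The final step invokes the Miyaoka--Mori input from \cite{miyaoka_numerical_1986} as used by Noguchi in his original proof: on a smooth projective hyperbolic variety $Z$ with $K_Z$ numerically semi-positive, the space of surjective holomorphic maps from a fixed projective variety to $Z$ admits only zero-dimensional irreducible components (a non-trivial deformation would produce a section of $\widetilde{f}_0^\ast T_Z$ which, combined with semi-positivity of $K_Z$, forces a rational curve on $Z$ by the Miyaoka--Mori uniruledness criterion, contradicting hyperbolicity). Applied to the lifted family, this contradicts $\dim\mathcal{F}>0$ and finishes the argument. The main obstacle is the second step: one must check carefully that the orbifold condition on $ev$ along $\Delta$ exactly cancels the ramification of $\pi$ so that $\widetilde{ev}$ is holomorphic (not merely meromorphic), and that positive-dimensionality of $\mathcal{F}$ is preserved under the base change. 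Once the problem has been transported to $Z$, the classical Miyaoka--Mori statement of \cite{miyaoka_numerical_1986} applies as a black box, which is precisely why the proof works in the uniformizable case.
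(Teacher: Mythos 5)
Your strategy genuinely differs from the paper's, and the key step — lifting the evaluation morphism through the $\Delta$-étale cover $\pi:Z\to(Y,\Delta)$ — contains a gap that is precisely the subtlety the paper's proof is designed to avoid. You assert that "the orbifold ramification condition on $ev$ along each component of $\Delta$ is precisely what is needed to lift $ev$ across $\pi$" and that the normalized fiber product $W\to X\times\mathcal{F}$ is finite \emph{étale}. In Campana's definition (\cref{def-orb-map}), the orbifold condition is only an inequality $n_{i,j}\geq m_i$ on pullback multiplicities, not the divisibility $m_i\mid n_{i,j}$ that would be required for the fiber product to be unramified. Locally along a component of $ev^\ast\Delta_i$ the relevant picture is $z\mapsto z^{n}$ fibered against $w\mapsto w^{m}$ over a disc; the normalized fiber product maps to the $z$-disc by $t\mapsto t^{m/\gcd(m,n)}$, which is étale if and only if $m\mid n$. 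So for, say, $m_i=2$ and $n_{i,j}=3$ (perfectly allowed by the orbifold condition), $W\to X\times\mathcal{F}$ is ramified over $ev^\ast\Delta$, a divisor that moves with the parameter $f\in\mathcal{F}$; the sources $W_f$ are then not locally constant in $f$, and the claimed "positive-dimensional family of surjective maps $\widetilde{X}\to Z$ from a fixed $\widetilde{X}$" does not materialize. Your own caveat at the end ("one must check carefully that the orbifold condition on $ev$ along $\Delta$ exactly cancels the ramification of $\pi$") flags the right worry, but the check fails.

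The paper's proof of \cref{zero-dim-unif+semipos} circumvents this by never lifting the family itself. It follows Noguchi's original $c_1\leq 0$ argument from \cite{noguchi_hyperbolic_1985}: a positive-dimensional deformation yields a section $\tau\in H^0(Y,S^lTY)$, and it is this \emph{tensor} that is pulled back along the uniformizing cover $p_Y:\widetilde Y\to(Y,\Delta)$. The resulting $p_Y^\ast\tau$ is automatically holomorphic, and the inequality $p_1\geq m_1$ is exactly what is needed to show it vanishes to order $\geq(m_1-1)l$ along $\Delta$, hence lies in $H^0(\widetilde Y,S^lT\widetilde Y)$. The contradiction is then Kobayashi's vanishing theorem \cite[Theorem~7]{kobayashi_first_1980} for compact Kähler manifolds with $c_1\leq0$ and no Euclidean factor in the universal cover — \emph{not} Miyaoka--Mori, which the paper deploys for the uniformizable case without the $c_1\leq 0$ hypothesis in \cref{zero-dim-unif}. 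Your parenthetical that semi-positivity of $K_Z$ together with a deformation "forces a rational curve on $Z$ by Miyaoka--Mori" is also internally inconsistent, since Miyaoka--Mori produces rational curves from a curve $C$ with $K_Z\cdot C<0$, which is exactly what semi-positivity of $K_Z$ forbids.
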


\begin{proof}
We argue by contradiction. Assume that $\dim \Sur(X,(Y,\Delta))>0$. We follow the proof of \cite[Theorem(4.1)]{noguchi_hyperbolic_1985}. The idea of the proof is to construct a non-zero holomorphic section of some symmetric tensor power of the tangent bundle.
   By the same arguments, we assume that there exists a local deformation $f_t$ of $f$, and so, we can construct a section $\tau \in H^0(Y, S^lTY)$, where $S^l TY$ denotes the $l$-th symmetric tensor power of $TY$ and $l$ is some integer defined in the proof.  
   Let us prove that $p_Y^\ast\tau \in H^0\left(Y, S^lT(p_Y,\Delta)\right)$, where $p_Y:\widetilde{Y} \to (Y,\Delta)$ is the covering given by the uniformization property of $(Y,\Delta)$ (see \cref{def-tang-orbi}). One can note that $T(p_Y,\Delta)=T\widetilde{Y}$ (see \cref{rmq-tangent-orbi}).

We recall all the notations on the following diagram.
   \[\xymatrix{
   f^\ast TY \ar[d]\ar[r]^{\widetilde{f}}& TY\ar[d]^\pi\\
   X \ar[r]_f\ar@/^/[u]^\sigma& (Y,\Delta)
   }\]

Let $y\in Y$ be a generic point in $Y$. We can choose $y$ outside the intersection locus of $\Delta$ since $\Delta$ is a simple normal crossing divisor. Locally, we denote by $y_1,\dots, y_n$ local cordinates on $Y$ centered in $y$ such that $\Delta = \left(1-\frac{1}{m_1}\right)\{y_1=0\}$. Let $x\in X$ such that $f(x)=y$. We denote by $x_1,\dots, x_n$ local coordinates on $X$ centered in $x \in X$.

Since $f$ is an orbifold morphism, $f$ is locally given by
\[\begin{array}{c|ccc}
     f :  & \D^n & \longrightarrow & \D^n \\
     & (x_1,\cdots,x_n) & \longmapsto & (x_1^{p_1},x_2,\cdots,x_n)
\end{array}\]
with $p_1\geq m_1$.
One can assume that the local deformation $f_t$ is given by 
\[\begin{array}{c|ccc}
     F : & \D^n \times \D & \longrightarrow & \D^n \\
     & (x_1,\cdots,x_n;t) & \longmapsto & F(x_1,\cdots,x_n;t)=(F_1(x_1,\cdots,x_n;t),\cdots,F_n(x_1,\cdots,x_n;t)).
\end{array}\]
Considering $y$ sufficiently generic, we can consider that $x$ belongs to only one component of $f^\ast\Delta$, such that
\[F_1(x_1,\cdots,x_n;t)=h_1(x_1,\cdots,x_n;t)^{p_1}\]
and for $j\geq 2$,
\[F_j(x_1,\cdots,x_n;t)=h_j(x_1,\cdots,x_n;t)\]
with $h_1,\dots,h_n$ holomorphic maps and $p_1\geq m_1$.

 We can write the partial derivative as follows
    \[\frac{\partial F_1}{\partial t} (x_1,\dots,x_n;t) = h_{1}(x_1,\dots,x_n;t)^{p_1-1}\psi_{i}(x_1,\dots,x_n;t) \]
and for $j\geq 2$,
\[\frac{\partial F_j}{\partial t} (x_1,\dots,x_n;t) = \psi_{i}(x_1,\dots,x_n;t) \]
    with $\psi_{1},\dots, \psi_n$ some holomorphic maps. 

The section $\sigma \in H^0(X,f^\ast TY)$ is locally given by
    \begin{multline*}
        (x_1,\dots,x_n)\mapsto 
    \left(x_1,\dots, x_n;\psi_{1}(x_1,\dots,x_n;0)h_{1}(x_1,\dots,x_n;0)^{p_1-1},\dots,\right. \\
    \left.\psi_{n}(x_1,\dots,x_n;0)\right).
    \end{multline*}

    The map $\widetilde{f}:f^\ast TY \to TY$ is given by
\[
    (x_1,\dots,x_n;\omega_1,\dots,\omega_n) \mapsto \left( f(x_1,\dots,x_n);\omega_1,\dots,\omega_n\right).
\]
$\pi$ is a finite ramified covering of degree $l$. Taking into account the $l$ preimage of $y\in Y$, we define the section $\tau\in H^0(Y,S^lTY)$ as follow
\[\begin{array}{c|ccc}
     \tau :  & Y & \longrightarrow & S^lTY \\
     & y & \longmapsto & \underset{f(x)=y}{\otimes}\widetilde{f}\circ \sigma (x)
\end{array}\]
Locally, in $y\in Y$, $\tau$ is described by 
\begin{equation*}
\begin{split}
    \tau(y_1,\dots,y_n)&=\underset{f(x)=y}{\otimes}\widetilde{f}\circ \sigma (x)\\
    &=\underset{\substack{x_1^{p_1}=y_1 \\ x_i=y_i,\,i\neq 1}}{\otimes} \widetilde{f}\circ \sigma (x_1,\dots, x_n)\\
    &= \underset{\substack{x_1^{p_1}=y_1 \\ x_i=y_i,\,i\neq 1}}{\otimes}\left(x_1^{p_1},x_2,\dots, x_n\vphantom{h_{1}\left(x_1,\dots,x_n\right)^{p_1-1}}\right.;\\
    &\hspace{1.9cm}\left.\psi_{1}\left(x_1,\dots,x_n;0\right) h_{1}\left(x_1,\dots,x_n;0\right)^{p_1-1},\dots,\psi_{n}\left(x_1,\dots,x_n;0\right)\right)\\
    &= \underset{\substack{x_1^{p_1}=y_1 \\ x_i=y_i,\, i\neq 1}}{\otimes}\left(y_1,y_2,\dots, y_n;\vphantom{h_{1}\left(y_1^{\frac{1}{p_1}},y_2,\dots,y_n;0\right)^{p_1-1}}\right.\\
    &\hspace{2.2cm}\left. \psi_{1}\left(y_1^{\frac{1}{p_1}},y_2,\dots,y_n;0\right)h_{1}\left(y_1^{\frac{1}{p_1}},y_2,\dots,y_n;0\right)^{p_1-1},\dots,\right.\\
    &\hspace{6.9cm}\left. \vphantom{h_{1}\left(y_1^{\frac{1}{p_1}},y_2,\dots,y_n;0\right)^{p_1-1}}\psi_{n}\left(y_1^{\frac{1}{p_1}},y_2,\dots,y_n;0\right)\right).
\end{split}
\end{equation*}
Note that the former expression is well defined since we take the product over all preimages.

Assume that $(Y,\Delta)$ is uniformizable by $\widetilde{Y}$. We represented the situation by the following diagram.

\[\xymatrix{
& {\widetilde{Y}}\ar[d]^{p_Y}\\
X \ar[r]_f & (Y,\Delta)}
\]
Locally, $p_Y$ is given by
\[p_Y: (z_1,\dots,z_m)\mapsto \left(z_1^{m_1},\dots,z_n\right).\]
We can pullback by ${p_Y}$ the above section $\tau$. Then, locally, we obtain a section given by
\begin{equation*}
\begin{split}
p_Y^\ast \tau(z_1,\dots,z_n)&= \tau(p_Y(z_1,\dots,z_n))\\
    &= \tau(z_1^{m_1},z_2,\dots,z_n)\\
    &= \underset{\substack{x_1^{p_1}=z_1^{m_1} \\ x_i=z_i,\, i\neq 1}}{\otimes}\left(z_1^{m_1},z_2,\dots, z_n;\vphantom{h_{1}\left(z_1^{\frac{m_1}{p_1}},z_2,\dots,z_n\right)^{p_1-1}}\right.\\
    &\hspace{1.4cm}\left. \psi_{1}\left(z_1^{\frac{m_1}{p_1}},z_2,\dots,z_n;0\right)h_{1}\left(z_1^{\frac{m_1}{p_1}},z_2,\dots,z_n;0\right)^{p_1-1},\dots,\right.\\
    &\hspace{6.4cm}\left. \vphantom{h_{1}\left(z_1^{\frac{m_1}{p_1}},z_2,\dots,z_n;0\right)^{p_1-1}}\psi_{n}\left(z_1^{\frac{m_1}{p_1}},z_2,\dots,z_n;0\right)\right).
\end{split}
\end{equation*}

So, along $\Delta$, $p_Y^\ast \tau $ vanishes with order at least
\[\frac{m_1}{p_1}(p_1-1)l.\]

Since we consider orbifold morphisms, $m_1\leq p_1$, so
\[\frac{m_1}{p_1}(p_1-1)l\geq (m_1-1)l,\]
where $(m_1-1)l$ corresponds to the multiplicity which appears in the definition of $S^lT(p_Y,\Delta)$ (see \cref{def-tang-orbi}). So, $p_Y^\ast \tau$ is a section of $S^lT(p_Y,\Delta)$ over $\widetilde{Y}$, \ie \[p_Y^\ast \tau \in H^0\left(\widetilde{Y},S^l T(p_Y,\Delta)\right).\]

We denote by $\widehat{Y}$ the universal cover of $\widetilde{Y}$. If there exists $k\in N^\ast$, such that $\widehat{Y}$ is biholomorphic to $\C^k \times Z$, then, by composition, we obtain orbifold entire curves $\C\to (Y,\Delta)$. This contradicts the fact that $(Y,\Delta)$ is hyperbolic, hence there is no euclidean factor in $\widehat{Y}$.

Since $c_1(Y,\Delta)$ is semi-negative, by \cite[Theorem~7]{kobayashi_first_1980}, we see that \[H^0\left(\widetilde{Y},S^l T(p_Y,\Delta)\right)=H^0\left({\widetilde{Y}},S^l T\widetilde{Y} \right)=\{0\},\] while $p_Y^\ast \tau \neq 0$. This is a contradiction and so $\Sur(X,(Y,\Delta))$ is zero-dimensional. 
 \end{proof}

\begin{thm}\label{finite-unif+semipos}
       Let $X$ be a projective variety and $(Y,\Delta)$ be a projective orbifold pair. If $(Y,\Delta)$ is hyperbolic and uniformizable and if $c_1((Y,\Delta))$ is semi-negative, then $\Sur(X,(Y,\Delta))$ is finite.
\end{thm}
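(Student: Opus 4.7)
The plan is to combine the rigidity statement \cref{zero-dim-unif+semipos} with the compactness statement \cref{Sur-compact}: since $\Sur(X,(Y,\Delta))$ will be both compact and zero-dimensional as a complex analytic subspace of $\Hol(X,Y)$, it must reduce to a finite set of isolated points.

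More precisely, view $X$ as the orbifold pair $(X,\emptyset)$, which trivially satisfies $\lfloor \emptyset \rfloor = \emptyset$. Since $(Y,\Delta)$ is projective it is compact as an orbifold pair, and it is hyperbolic by hypothesis, so \cref{Sur-compact} applies and yields that $\Sur(X,(Y,\Delta))$ is compact in $\Hol(X,Y)$. On the other hand, the hypotheses of the present theorem are exactly those of \cref{zero-dim-unif+semipos} (note that \cref{defi-unif} only defines uniformizability for smooth orbifolds, so smoothness of $(Y,\Delta)$ is implicit), hence $\Sur(X,(Y,\Delta))$ is zero-dimensional.

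To conclude, it suffices to recall that a compact complex analytic space of dimension zero consists of finitely many isolated points. I expect no real obstacle here: the substantial content has already been carried out in \cref{zero-dim-unif+semipos}, where the semi-negativity of $c_1((Y,\Delta))$ together with the Kobayashi vanishing theorem rules out nontrivial holomorphic symmetric tensors on the uniformizing cover, and in \cref{Sur-compact}, which supplies the compactness. The present theorem is a packaging of these two ingredients.
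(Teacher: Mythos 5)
Your argument is exactly the paper's own: combine the compactness of $\Sur(X,(Y,\Delta))$ from \cref{Sur-compact} with the zero-dimensionality from \cref{zero-dim-unif+semipos}, and conclude that a compact zero-dimensional analytic set is finite. Nothing to add.
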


\begin{proof}
By \cref{Sur-compact}, it suffices to show that each irreducible component of $\Sur(X,(Y,\Delta))$ is a one-point set.
Then, by \cref{zero-dim-unif+semipos},
one can see that each irreducible component of $\Sur(X,(Y,\Delta))$ is zero-dimensional. Then $\Sur(X,(Y,\Delta))$ contains only a finite number of one-point sets, and so $\Sur(X,(Y,\Delta))$ is finite.
\end{proof}

\section{Automorphism group}

Let $(Y,\Delta)$ be a smooth projective  orbifold. In this section, we are interested in maps of $(Y,\Delta)$ to itself. We refer to \cref{def-orb-map} for the definition of orbifold maps in the general case.
We denote by $\Aut(Y,\Delta)$ the set of orbifold automorphisms which is defined as follows:

\[\Aut(Y,\Delta)=\left\{f\in \Hol((Y,\Delta),(Y,\Delta))\,|\, f \text{ is biholomorphic}\right\}.\]

\begin{lem}\label{def-autom-orbi}
    Let $f\in \Aut(Y,\Delta)$ be an orbifold automorphism of $(Y,\Delta)$. Then $f$ is nothing but an automorphism of $Y$ which permutes the components of $\Delta$ with the same multiplictiy, and the inverse map $f^{-1}$ is also an orbifold map.
\end{lem}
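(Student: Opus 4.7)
The plan is to exploit the fact that $f \colon Y \to Y$ is a biholomorphism, so for any irreducible divisor $D \subset Y$ the pullback $f^{\ast}(D)$ is again an irreducible divisor with coefficient $1$, namely $f^{-1}(D)$. Thus in the notation of \cref{def-orb-map} one always has $t_{E,D} = 1$ with $E = f^{-1}(D)$ and $R = 0$, and the orbifold map condition reduces to the single inequality
\[m_Y\!\left(f^{-1}(D)\right) \;\geq\; m_Y(D)\]
for every irreducible divisor $D \subset Y$. The condition $f(Y) \not\subset \lceil\Delta\rceil$ is automatic since $f$ is surjective.

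First I would apply this inequality to $D = \Delta_i$, a component of $\lceil\Delta\rceil$ with orbifold multiplicity $m_i > 1$. The inequality $m_Y(f^{-1}(\Delta_i)) \geq m_i > 1$ forces $f^{-1}(\Delta_i)$ to lie in the support of $\Delta$, so there exists an index $\sigma(i)$ with $f^{-1}(\Delta_i) = \Delta_{\sigma(i)}$ and $m_{\sigma(i)} \geq m_i$. Since $f$ is a biholomorphism and $\lceil\Delta\rceil$ has finitely many irreducible components, the map $\sigma$ is a permutation of this finite index set.

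The next step is to upgrade this weak inequality to an equality. Applying the previous estimate iteratively along a cycle of $\sigma$ of length $r$ gives
\[m_i \;\leq\; m_{\sigma(i)} \;\leq\; m_{\sigma^{2}(i)} \;\leq\; \dots \;\leq\; m_{\sigma^{r}(i)} \;=\; m_i,\]
which forces all multiplicities on the cycle to coincide. Hence $f^{-1}$, and therefore $f$ itself, permutes the components $\Delta_i$ while preserving their orbifold multiplicities.

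Finally, this preservation of multiplicities immediately implies that $f^{-1}$ is an orbifold morphism: for every irreducible divisor $D \subset Y$ one has $(f^{-1})^{\ast}(D) = f(D)$ with coefficient $1$, and since $f$ permutes the components of $\lceil\Delta\rceil$ preserving multiplicities we obtain $m_Y(f(D)) = m_Y(D)$, which is the required inequality for $f^{-1}$. The only mildly delicate point in this argument is the cycle-length argument turning the one-sided inequality $m_{\sigma(i)} \geq m_i$ into equality; everything else is a direct unfolding of \cref{def-orb-map} in the biholomorphic setting.
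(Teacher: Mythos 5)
Your proof is correct, and it takes a genuinely more careful route than the paper at the one point that matters. Both you and the paper start the same way: since $f$ is a biholomorphism and an orbifold morphism, for each boundary component $\Delta_i$ the pullback $f^{\ast}\Delta_i$ is the irreducible divisor $f^{-1}(\Delta_i)$ with coefficient $1$, so the orbifold condition becomes $m_Y(f^{-1}(\Delta_i)) \geq m_i$, and since $m_i > 1$ this forces $f^{-1}(\Delta_i)$ to be a component of $\lceil\Delta\rceil$. The divergence is in how this one-sided inequality is upgraded to equality. The paper's proof asserts the reverse inequality by ``doing the same with $f^{-1}$'', which tacitly invokes that $f^{-1}$ is an orbifold morphism --- but with the definition $\Aut(Y,\Delta) = \{f \in \Hol((Y,\Delta),(Y,\Delta)) \mid f \text{ biholomorphic}\}$, this is exactly the conclusion of the lemma, not a hypothesis, so as written that step is circular or at least leaves a gap. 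Your observation that $i \mapsto \sigma(i)$, where $f^{-1}(\Delta_i)=\Delta_{\sigma(i)}$, is a permutation of a \emph{finite} index set, together with running the inequality $m_{\sigma(i)} \geq m_i$ around a $\sigma$-cycle to force equality, supplies a short, noncircular justification. From the equality $m_{\sigma(i)} = m_i$ it is then immediate that $f^{-1}$ is also an orbifold morphism. In short: same setup, but you added the permutation-cycle step that the paper's terse argument is missing, and that step is precisely what makes the proof go through without assuming its own conclusion.
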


\begin{proof}
Let $f \in \Aut(Y,\Delta)$ with $\Delta=\sum \left(1-\frac{1}{m_i}\right) \Delta_i$. Since $f$ is an automorphism, there exists a unique divisor $D \subset Y$ such that $f^{-1} \Delta_i =D $. Since $f$ is an orbifold map, $m_Y(D)\geq m_Y(\Delta_i)$ where $m_Y(D)$ (resp. $m_Y(\Delta_i)$) denotes the orbifold multiplicity of $D$ (resp. $\Delta_i$) on $Y$.
The map $f$ being invertible, we can do the same with $f^{-1}$ and so, $m_Y(\Delta_i)\geq m_Y(D)$. Thus $m_Y(\Delta_i)=m_Y(D)$.
\end{proof}

\begin{thm}\label{automorphism-group}
    Let $(Y,\Delta)$ be a smooth projective orbifold. If $(Y,\Delta)$ is hyperbolic, then $\Aut(Y,\Delta)$ is finite.
\end{thm}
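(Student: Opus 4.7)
The plan is to show that $\Aut(Y,\Delta)$ is both compact and zero-dimensional; this will force it to be finite, in the same spirit as the conclusion of \cref{finite-unif+semipos}.

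First I would observe that $\Aut(Y,\Delta)\subset \Sur((Y,\Delta),(Y,\Delta))$ (automorphisms are bijective and hence surjective), and that \cref{Sur-compact} with $(X,\Delta_X)=(Y,\Delta_Y)=(Y,\Delta)$ applies since $(Y,\Delta)$ is compact and hyperbolic, with $\lfloor \Delta\rfloor=\emptyset$. Thus the ambient set $\Sur((Y,\Delta),(Y,\Delta))$ is compact, and it suffices to show that $\Aut(Y,\Delta)$ is closed inside it. For this I would take a sequence $f_n\in \Aut(Y,\Delta)$ converging to some $f\in \Sur((Y,\Delta),(Y,\Delta))$: by \cref{def-autom-orbi} each inverse $f_n^{-1}$ lies again in $\Aut(Y,\Delta)\subset \Sur((Y,\Delta),(Y,\Delta))$, so compactness yields a subsequence $f_{n_k}^{-1}$ converging to some $g$. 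Passing to the limit in $f_{n_k}\circ f_{n_k}^{-1}=\id_Y=f_{n_k}^{-1}\circ f_{n_k}$ uniformly on the compact $Y$ gives $f\circ g=g\circ f=\id_Y$, so $f$ is a biholomorphism, and by \cref{def-autom-orbi} it lies in $\Aut(Y,\Delta)$.

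For zero-dimensionality, I would use that $Y$ being compact, $\Aut(Y)$ carries the structure of a finite-dimensional complex Lie group, and $\Aut(Y,\Delta)$ is a closed Lie subgroup since the condition of permuting the components $\Delta_i$ with the same multiplicities is a closed condition (\cref{def-autom-orbi}). Assuming for contradiction that $\dim \Aut(Y,\Delta)>0$, I would pick a non-trivial holomorphic one-parameter subgroup $\phi:\C\to \Aut(Y,\Delta)^{0}$ with $\phi(0)=\id_Y$. For any $y\in Y\backslash \lceil\Delta\rceil$, the orbit $\gamma_y(t):=\phi(t)(y)$ remains in $Y\backslash \lceil\Delta\rceil$ because each $\phi(t)$ permutes the $\Delta_i$, so $\gamma_y:\C\to (Y,\Delta)$ is trivially an orbifold morphism. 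Combining \cref{distance-decreasing-orbi} with Kobayashi-hyperbolicity of $(Y,\Delta)$ and the vanishing of the Kobayashi pseudo-distance on $\C$ forces $\gamma_y$ to be constant; therefore $\phi(t)$ fixes every point of $Y\backslash \lceil\Delta\rceil$, hence all of $Y$ by continuity, contradicting the choice of $\phi$.

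The main obstacle I anticipate is the closedness step: to place the inverses $f_n^{-1}$ inside the compact set $\Sur((Y,\Delta),(Y,\Delta))$, one needs that the inverse of an orbifold automorphism is itself an orbifold morphism, and this is exactly what \cref{def-autom-orbi} provides.
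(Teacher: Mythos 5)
Your proof is correct and follows essentially the same two-step strategy as the paper: establish compactness of $\Aut(Y,\Delta)$ via \cref{Sur-compact} together with a closedness argument, then establish zero-dimensionality by exhibiting a one-parameter subgroup whose orbits are orbifold entire curves into $(Y,\Delta)$ that must be constant by hyperbolicity. The small variations, namely extracting a convergent subsequence of inverses instead of invoking \cref{morph-orbi-ferme} for closedness, and arguing directly with the flow and \cref{distance-decreasing-orbi} rather than routing through \cref{ev-orbi} and Kobayashi's lemma, are equally valid replacements for the paper's choices.
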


\begin{proof}
We recall that $\Aut(Y)$ is a complex Lie group since $Y$ is a projective variety (see \cite[Theorem~(5.4.3)]{kobayashi_hyperbolic_1998}).
Furthermore, by \cref{def-autom-orbi}, being an element of $\Aut(Y,\Delta)$ returns to be an element of $\Aut(Y)$ that satisfies some algebraic conditions. So $\Aut(Y,\Delta)$ is an algebraic variety.

Note that $\Aut(Y,\Delta)$ is closed in $\Aut(Y)$. 
Indeed, let $(f_n)$ be a sequence of orbifold automorphisms of $(Y,\Delta)$ which converges to $f\in \Aut(Y)$. Thus, $f(Y)$ cannot be contained in one of the components of $\Delta$. By \cref{morph-orbi-ferme}, we conclude that $f\in \Hol((Y,\Delta),(Y,\Delta))$, hence $f\in \Aut(Y)\cap \Hol((Y,\Delta),(Y,\Delta))$, \ie $f\in \Aut(Y,\Delta)$.

Since $\Aut(Y)$ is closed in $\Hol(Y,Y)$, one can see that $\Aut(Y,\Delta)$ is closed in $\Sur[(Y,\Delta),Y,\Delta)]$. The latter is compact (\cref{Sur-compact}), so $\Aut(Y,\Delta)$ is a compact complex Lie group.

Let us show that its identity component
$\Aut^0(Y,\Delta)$ is trivial. We argue by contradiction, assuming that $\Aut^0(Y,\Delta)$  is not trivial.

Let consider a point $y\in Y\backslash\lceil\Delta\rceil$. By \cref{def-autom-orbi}, $f(y)\notin \lceil\Delta\rceil$ for all $f\in \Aut(Y,\Delta)$. Moreover the evaluation map is an orbifold map (see \cref{ev-orbi}),  so $ev_y : \Aut^0(Y,\Delta) \longrightarrow (Y,\Delta)$ is an orbifold morphism. 

Then, since $\Aut^0(Y,\Delta)$ is an algebraic subgroup, we get a non-constant holomorphic map $\C \to \Aut^0(Y,\Delta)$, and so, by composition, we obtain an orbifold entire curve
\[\C \longrightarrow \Aut^0(Y,\Delta) \overset{ev_y}{\longrightarrow} (Y,\Delta).\]
This map has to be trivial because $(Y,\Delta)$ is hyperbolic. More precisely, $ev_y$ has to be trivial since we assume $\C \to \Aut^0(Y,\Delta)$ not to be constant.
By \cite[Lemma~(5.3.1)]{kobayashi_hyperbolic_1998}, with $\Aut^0(Y,\Delta)$ is compact, $ev:\Aut^0(Y,\Delta)\times \{y\} \to (Y,\Delta)$ is constant for all $y\in Y$.  Thus, for all maps
$f,g \in \Aut^0(Y,\Delta)$, for all $y\in Y$, $f(y)=g(y)$, and so $f=g$.
We conclude that $\Aut^0(Y,\Delta)$ is trivial.

$\Aut(Y,\Delta)$ is compact and zero-dimensional and so we conclude that  $\Aut(Y,\Delta)$ is finite.
\end{proof}

\section{Pointed maps}

Let $(X,\Delta_X)$ and $(Y,\Delta_Y)$ be two orbifold pairs. Let $x$ be a point in $X$ and $y$ be a point in $Y$.
We define the set of holomorphic pointed orbifold maps as follows
\[\Hol\left[((X,\Delta_X),x),\left((Y,\Delta_Y),y\right)\right]=\left\{f\in \Hol((X,\Delta_X) ,(Y,\Delta_Y))\,|\, f(x)=y \right\}.\]

\begin{thm}
Let $(X,\Delta_X)$ be an orbifold pair with $\lfloor \Delta_X\rfloor=\emptyset$. Let $(Y,\Delta_Y)$ be a hyperbolic compact orbifold pair. Let $x$ be a point in $X$ and $y$ be a point in $Y\backslash \lceil\Delta\rceil$.
Then  $\Hol\left[((X,\Delta_X),x),\left((Y,\Delta_Y),y\right)\right]$ is finite.
\end{thm}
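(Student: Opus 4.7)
The plan is to show $\mathcal{H}:=\Hol\bigl[((X,\Delta_X),x),((Y,\Delta_Y),y)\bigr]$ is both compact and discrete inside $\Hol(X,Y)$, from which finiteness follows. For compactness, I realize $\mathcal{H}$ as the intersection $\Hol((X,\Delta_X),(Y,\Delta_Y)) \cap ev_x^{-1}(y)$. The first factor is relatively compact in $\Hol(X,Y)$ by \cref{Compactness-hol}. If $(f_n) \subset \mathcal{H}$ converges locally uniformly to some $f \in \Hol(X,Y)$, then $f(x)=y$ by continuity; since $y \in Y\setminus \lceil\Delta_Y\rceil$, the image $f(X)$ is not contained in $\lceil\Delta_Y\rceil$, and \cref{morph-orbi-ferme} gives that $f$ is an orbifold morphism, hence $f \in \mathcal{H}$. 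So $\mathcal{H}$ is closed in a relatively compact set, hence compact.

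For discreteness I argue by contradiction, assuming that some connected component $\mathcal{H}_0 \subset \mathcal{H}$ is positive-dimensional at $f_0$. Since $\mathcal{H}$ inherits a local complex-analytic structure from $\Hol(X,Y)$ (finite-dimensional near each point, by compactness and hyperbolicity of $Y$), there exists a non-constant holomorphic disk $\phi:\Delta \to \mathcal{H}_0$ with $\phi(0)=f_0$. Applying \cref{ev-orbi} to the family $\phi(\Delta)$, the evaluation
\[
F:X\times \Delta \longrightarrow (Y,\Delta_Y), \qquad F(x',t):=\phi(t)(x'),
\]
is an orbifold morphism. The pointed condition yields $F(x,t)=y$ for all $t\in \Delta$, and \cref{distance-decreasing-orbi} applied to $\phi(t)\in \mathcal{H}$ gives, for every $x'\in X$ and $t\in \Delta$,
\[
d_{(Y,\Delta_Y)}\bigl(\phi(t)(x'),y\bigr)=d_{(Y,\Delta_Y)}\bigl(\phi(t)(x'),\phi(t)(x)\bigr) \leq d_{(X,\Delta_X)}(x',x).
\]
Hence $V_{x'}:=ev_{x'}(\mathcal{H}_0) \subset Y$ is a compact connected analytic subset contained in the closed $d_{(Y,\Delta_Y)}$-ball of radius $d_{(X,\Delta_X)}(x',x)$ centered at $y$.

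The main obstacle is converting this into a contradiction with hyperbolicity, since a single non-constant orbifold disc into $(Y,\Delta_Y)$ is perfectly allowed. The key observation is that because $y\in Y\setminus\lceil\Delta_Y\rceil$ is a normal point and the Kobayashi distance $d_{(Y,\Delta_Y)}$ induces the complex topology on the compact hyperbolic pair, one can choose $x'$ sufficiently close to $x$ so that the above Kobayashi ball lies inside a Stein open neighborhood of $y$ in $Y$. A compact connected analytic subset of a Stein space is reduced to a point by the maximum principle, so $V_{x'}=\{f_0(x')\}$; equivalently $\phi(t)(x')=f_0(x')$ for every $t\in\Delta$ and every $x'$ in some neighborhood $U$ of $x$. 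Since $X$ is irreducible and reduced, the identity principle forces $\phi(t)=f_0$ on all of $X$ for every $t\in\Delta$, contradicting the non-constancy of $\phi$. Consequently each connected component of $\mathcal{H}$ is a single point, and compactness then yields that $\mathcal{H}$ is finite.
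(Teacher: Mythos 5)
Your proof is correct and, for the discreteness half, takes a genuinely different route from the paper. The compactness half agrees with the paper exactly: both realize $\mathcal{H}$ as $ev_x^{-1}(\{y\})\cap \Hol[(X,\Delta_X),(Y,\Delta_Y)]$, use \cref{Compactness-hol} for relative compactness, and use \cref{morph-orbi-ferme} together with $y\notin\lceil\Delta_Y\rceil$ for closedness. For discreteness, however, the paper simply cites Kobayashi's rigidity Lemma (5.3.1) from \cite{kobayashi_hyperbolic_1998}: a compact connected complex subvariety $H\subset\Hol(X,Y)$ with $ev_x|_H$ constant has $ev_{x'}|_H$ constant for every $x'$. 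That lemma as stated assumes $Y$ itself is hyperbolic, which in the orbifold setting need not follow from hyperbolicity of $(Y,\Delta_Y)$ (the orbifold pseudo-distance dominates the ordinary one, not conversely), so the citation is a bit loose. You instead prove the relevant rigidity directly in the orbifold category: you use \cref{distance-decreasing-orbi} to trap $ev_{x'}(\mathcal{H}_0)$ in a $d_{(Y,\Delta_Y)}$-ball of radius $d_{(X,\Delta_X)}(x',x)$, let $x'\to x$ so that the ball sinks into a Stein neighborhood of $y$ (using continuity of $d_{(X,\Delta_X)}$ and the fact that the genuine metric $d_{(Y,\Delta_Y)}$ induces the complex topology on the compact $Y$), conclude that the holomorphic map from the compact connected $\mathcal{H}_0$ to a Stein space is constant, and then propagate via the identity principle on the irreducible $X$. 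This is essentially the standard proof of Kobayashi's lemma transported to the orbifold Kobayashi metric, and it is arguably cleaner: it needs only the orbifold hyperbolicity hypothesis that is actually given. The one implicit assumption you share with the paper is that $\mathcal{H}$ carries a complex-analytic structure making $ev_{x'}|_{\mathcal{H}_0}$ holomorphic; this is standard and unavoidable for the word ``component'' to make sense. In short: same decomposition, same compactness argument, and a more explicit, better-adapted rigidity argument.
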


\begin{proof}
    First, recall some useful notations. 
   We define the evaluation morphism $ev$ by
\[\begin{array}{c|ccc}
    ev : & X \times \Hol(X,Y) & \longrightarrow & Y \\
     & (x,f) & \longmapsto & f(x)
\end{array}\]
Let $x$ be a point in $X$. We define the morphism $ev_x$, the evaluation at $x$, by
\[\begin{array}{c|ccc}
    ev_x : & \Hol(X,Y) & \longrightarrow & Y \\
     & f & \longmapsto & f(x)
\end{array}\]
$\Hol\left[((X,\Delta_X),x),\left((Y,\Delta_Y),y\right)\right]$ can be interpreted as
\[ev_x^{-1}(\{y\})\cap \Hol[(X,\Delta_X),(Y,\Delta_Y)]  \subset \Hol[(X,\Delta_X),(Y,\Delta_Y)].\]
We denote it by $A$.
Note that $ev_x^{-1}(\{y\})$ is closed in $\Hol(X,Y)$.
We only want to consider orbifold morphisms so we restrict $ev_x$ to $\Hol[(X,\Delta_X),(Y,\Delta_Y)]$.
Since $y \in Y\backslash \lceil\Delta\rceil$, $A$ is closed in $\Hol(X,Y)$ by \cite[Proposition~7]{campana_brody_2009}]. By \cref{Compactness-hol}, $\Hol[(X,\Delta_X),(Y,\Delta_Y)]$ is relatively compact in $\Hol(X,Y)$, then $A$ is compact.

Let $H$ be an irreducible component of $A$. Let us show that $H$ is 0-dimensional.
By definition of $H$, the restriction of $ev$ to $\{x\}\times H$ is constant, equal to $y$. Since $H$ is compact, we can apply \cite[Lemma~(5.3.1)]{kobayashi_hyperbolic_1998}. Then for all point $x'\in X$, the map $\{x'\}\times H \xrightarrow{ev} Y$ is constant. Thus, for all maps $f,g \in H$, for all $x'\in X$, $f(x')=g(x')$. This shows that $f=g$, and so, H is a one-point set.
\end{proof}

\section{Construction of a curve intersecting non-positively the orbifold canonical bundle}\label{section-intersection-non-pos}

In this paragraph, we construct
a curve intersecting non-positively the orbifold canonical bundle.

Let $X$ be a projective variety and $(Y,\Delta)$ be a smooth projective orbifold. As before, we can reduce the study to the case where $\dim X=\dim Y$ and where all the maps from $X$ to $(Y,\Delta)$ are finite. Let $\F\subset \Sur(X,(Y,\Delta))$ be
a smooth projective family of surjective orbifold maps. Since we consider algebraic varieties, we can intersect $\F$ with sufficiently many hyperplanes, such that $\dim \F=1$.

One of the goals of this section is to construct a holomorphic section of the line bundle 
\[ev_x^\ast \left(\left(\displaystyle\bigwedge^{\dim Y} TY-\Delta\right)^{\otimes M}\right)\]
over $\F$, for a well-chosen point $x\in X$ and an integer $M\in \N$.

The main idea is to consider the Jacobian of the evaluation morphism, taken with some power. The power is chosen to give a sense to the orbifold canonical bundle.

\begin{defi}
We define the following sets:
    \[X_0 = \{x\in X\,|\, \dim ev_x(\F) =0\}\]
We choose a generic point $f_0\in \F$, so that the order of each point $x\in X$, satisfying $f(x)\in \lceil \Delta\rceil$, is generic, \ie minimal, and therefore remains the same for each map $f_t\in F$ in the neighbourhood of $f_0$.
We denote the set of these maps by $\F_{\text{min}}$.
    \[\F_{\text{min}} = \{f \in \F \,|\, f \text{ branches over }\Delta \text{ with minimal multiplicity}\}\]
    \[X_1 =\{x\in X\,|\, \exists f\notin \F_{\text{min}}, f(x)\in \Delta\}\]
\end{defi}

\begin{prop}\label{X0-X1-finis}
The sets $X_0$ and $X_1$, defined above, are proper closed sets.
\end{prop}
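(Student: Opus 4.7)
The plan is to prove the two claims separately, each by a standard semicontinuity argument exploiting that $\F$ is a projective irreducible curve.

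For $X_0$, I would study the proper morphism
\[\Psi := (\pi_X, ev) : X\times \F \longrightarrow X\times Y,\]
whose properness follows from that of $\F$. The fibre of $\Psi$ above $(x,y)$ is $\{x\}\times ev_x^{-1}(y)$, so by upper semicontinuity of fibre dimension the locus
\[T := \left\{(x,y)\in X\times Y \,:\, \dim \Psi^{-1}(x,y)\geq 1\right\}\]
is closed in $X\times Y$. Because $\dim \F = 1$ and $\F$ is irreducible, the image $ev_x(\F)\subset Y$ is zero-dimensional if and only if $ev_x:\F\to Y$ is constant, equivalently there exists $y\in Y$ with $(x,y)\in T$. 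Hence $X_0 = \pi_X(T)$; and $\pi_X : X\times Y\to X$ is proper since $Y$ is projective, so $X_0$ is closed in $X$. Moreover $X_0 \neq X$: otherwise any two maps $f,g\in \F$ would satisfy $f(x)=g(x)$ for every $x\in X$, forcing $f=g$ and contradicting $\dim\F =1$.

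For $X_1$, the key step is to show that $\F\setminus \F_{\text{min}}$ is finite. On $X\times \F$, for each component $\Delta_i\subset \lceil\Delta\rceil$ I would decompose $ev^{\ast}\Delta_i$ into its horizontal components (those dominating $\F$ via the second projection) and its vertical components (those contained in some fibre $X\times\{f\}$). A map $f$ lies in $\F_{\text{min}}$ precisely when no vertical component sits above $f$, in which case the restriction of $ev^{\ast}\Delta_i$ to $X\times\{f\}$ recovers $f^{\ast}\Delta_i$ with the generic horizontal multiplicities. By upper semicontinuity of multiplicities in a flat family, $\F\setminus \F_{\text{min}}$ is closed in $\F$; it does not contain the generic point $f_0$, so it is a proper closed subset of the irreducible curve $\F$, hence a finite set $\{f_1,\ldots,f_k\}$. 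We then have
\[X_1 = \bigcup_{i=1}^k f_i^{-1}(\lceil \Delta\rceil),\]
a finite union of divisors, each of which is proper and closed in $X$ because $f_i$ is finite surjective onto $Y$ while $\lceil\Delta\rceil\subsetneq Y$.

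The only delicate point I expect is giving a clean proof that $\F_{\text{min}}$ is Zariski-open, which amounts to the semicontinuity of ramification multiplicities in a one-parameter family of finite orbifold maps; this is exactly why the definition of $\F_{\text{min}}$ is phrased in terms of the generic minimal multiplicity. Everything else is formal, resting only on properness and dimension counting.
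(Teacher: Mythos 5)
Your proof is correct and follows essentially the same strategy as the paper: for $X_0$ the key observation that $X_0 = X$ would force every map in $\F$ to agree at every point (collapsing $\F$ to a point, contradicting $\dim \F = 1$), and for $X_1$ the finiteness of the non-generic locus $\F\setminus\F_{\text{min}}$ on the curve $\F$, reducing $X_1$ to a finite union of proper closed sets $f_i^{-1}(\lceil\Delta\rceil)$. The paper disposes of the closedness of $X_0$ with the phrase ``by continuity,'' so your explicit construction of the proper morphism $\Psi=(\pi_X,ev)$ and the appeal to upper semicontinuity of fibre dimension is a welcome filling-in of a detail the paper leaves implicit, not a genuinely different route.
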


\begin{proof}
    By construction, $\dim ev_x(\F)\in \{0,1\}$.
    The set $X_0$ is a proper closed set, otherwise, by continuity, all maps of $\F$ are equals at any point of $X$,\ie  $\F$ is a one-point set, which contradicts the fact that $\dim \F=1$. 

By hypothesis, $\F$ is a curve. We are interested in the points of $\F$ that branch along each divisor $\Delta_i\in \lceil \Delta\rceil$, with minimal multiplicity. There exists at most a finite number of non generic points, so $\overline{\F_{\text{min}}}$ is a finite set.

Since each orbifold morphism of $\F$ is continuous, $f^{-1}(\Delta)$ is a closed set and so, $X_1$ is a closed set.
\end{proof}

\begin{thm}\label{construction-section-sur-F}
    Let $X$ be a projective variety and $(Y,\Delta)$ be a smooth compact orbifold. Assume that $\dim X=\dim Y$. If there exists a family $\F\subset \Sur(X,(Y,\Delta))$ of orbifold morphisms, such that $\dim \F=1$, then we can construct a holomorphic non-zero section $\sigma$ of 
    $ev_x^\ast ((\bigwedge^{\dim Y} TY-\Delta)^{\otimes M})$ on $\F$, for every general point $x \in X$ and an integer $M\in \N$, \ie
    \[\sigma \in H^0\left(\F,ev_x^\ast \left(\left(\displaystyle\bigwedge^{\dim Y} TY-\Delta\right)^{\otimes M}\right)\right).\]
\end{thm}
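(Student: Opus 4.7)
The plan is to build $\sigma$ from the Jacobian determinants of the maps $f\in\mathcal{F}$. Because $\dim X=\dim Y=:n$ and each $f\in\mathcal{F}$ is finite and surjective, the Jacobian $J_f:=\det df$ is a global section of $K_X\otimes f^{\ast}(-K_Y)$ on $X$. Evaluating at a fixed point $x\in X$ gives $J_f(x)\in (K_X)_x\otimes (-K_Y)_{f(x)}$, and as $f$ varies over $\mathcal{F}$, the assignment $f\mapsto J_f(x)$ produces a section $J(x)$ of $ev_x^{\ast}(-K_Y)$ on $\mathcal{F}$ (the factor $(K_X)_x$ being a constant line).

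First I would fix a point $x\in X\setminus(X_0\cup X_1)$ satisfying two further generic conditions: for every $f\in\mathcal{F}$ with $f(x)\in\lceil\Delta\rceil$, the image $f(x)$ avoids the pairwise intersections of the $\Delta_i$, and $J_f(x)\neq 0$ for generic $f\in\mathcal{F}$. By \cref{X0-X1-finis} together with standard dimension counts, these are all open dense conditions.

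Next, set $M:=\lcm(m_i)$, so that $M\Delta$ is an integral divisor. Rewrite the target line bundle as
\[ev_x^{\ast}\Bigl(\bigwedge^{n}TY-\Delta\Bigr)^{\otimes M}=ev_x^{\ast}(-K_Y)^{\otimes M}\otimes\mathcal{O}_{\mathcal{F}}\!\left(-M\cdot ev_x^{\ast}\Delta\right),\]
and let $s$ be the tautological meromorphic section of $\mathcal{O}_{\mathcal{F}}(-M\cdot ev_x^{\ast}\Delta)$ having a pole of order $M(1-\tfrac{1}{m_i})$ along each divisor $ev_x^{\ast}\Delta_i$. I then set
\[\sigma:=J(x)^{\otimes M}\cdot s.\]
This is a priori only meromorphic on $\mathcal{F}$, and it is non-zero by the genericity of $x$.

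The main step is to verify that $\sigma$ is actually holomorphic, i.e.\ that $J(x)^{M}$ absorbs every pole of $s$. Let $t_0\in\mathcal{F}$ with $y_0:=f_{t_0}(x)\in\Delta_i$. Pick coordinates $(y_1,\dots,y_n)$ on $Y$ near $y_0$ with $\Delta_i=\{y_1=0\}$, and coordinates on $X$ near $x$. Because $x\notin X_1$, every $f_t$ close to $t_0$ branches along $\Delta_i$ with the minimal multiplicity $m_i$, and by genericity of $x$ only one component of $f_t^{-1}(\Delta_i)$ meets $x$. Hence in a neighborhood of $(x,t_0)$ the first coordinate factors as $f_1(\,\cdot\,,t)=u(\,\cdot\,,t)^{m_i}$ with $u$ holomorphic. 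Setting $k':=\ord_{t_0}u(x,\,\cdot\,)$, this yields $\ord_{t_0}(ev_x^{\ast}\Delta_i)=m_ik'$; on the other hand
\[\frac{\partial f_1}{\partial x_j}(x,t)=m_i\,u(x,t)^{m_i-1}\,\frac{\partial u}{\partial x_j}(x,t),\]
so the first row of the Jacobian matrix vanishes at $t_0$ to order $(m_i-1)k'$, while the other rows remain generically non-zero there. Therefore $J(x)$ vanishes at $t_0$ to order at least $(m_i-1)k'$, and $J(x)^{M}$ to order $M(m_i-1)k'$, which exactly matches the pole order $M(1-\tfrac{1}{m_i})\cdot m_ik'$ of $s$.

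The main technical obstacle is securing the local factorization $f_1=u^{m_i}$ uniformly for $t$ in a neighborhood of $t_0$, rather than only at $t=t_0$. This uses the defining property of $\mathcal{F}_{\min}$ together with the smoothness of $(Y,\Delta)$ and the generic choice of $x$, which together rule out component-splitting of $f_t^{-1}(\Delta_i)$ or jumps of its multiplicity as $t$ moves. Once this uniform factorization is established, the computation above cancels every pole of $s$ and $\sigma$ is the desired non-zero holomorphic section of $ev_x^{\ast}((\bigwedge^{n}TY-\Delta)^{\otimes M})$ on $\mathcal{F}$.
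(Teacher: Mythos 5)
Your approach is essentially the same as the paper's: form the Jacobian of the evaluation at a fixed generic point $x$, raise to the power $M=\lcm(m_i)$, and compare the vanishing order of the Jacobian along $ev_x^{\ast}\Delta_i$ to the pole order demanded by the twist $-M\Delta$. The bookkeeping $(n_{i,j}-1)M \geq n_{i,j}(1-\tfrac{1}{m_j})M$ for $n_{i,j}\geq m_j$ is exactly the paper's inequality, and your $J(x)^{\otimes M}\cdot s$ is an explicit rewording of the same section.

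However, the step you flag as the "main technical obstacle" is genuinely the crux, and your resolution is not adequate. You need the factorization $F_1(z,t)=u(z,t)^{p_i}$ (with exponent $p_i\geq m_i$, not necessarily $m_i$) to hold with $u$ holomorphic \emph{jointly} in $(z,t)$ near $(x,t_0)$, not just fiberwise for each $t$. The defining property of $\F_{\min}$ only records that the ramification multiplicity of each individual $f_t$ over $\Delta_i$ is the generic one; it says nothing about the existence of a holomorphic $p_i$-th root of $F_1$ varying holomorphically with $t$, and a $p_i$-th root is not unique so "continuity of roots" does not immediately give it. The paper resolves this by invoking \cref{ev-orbi}: the evaluation morphism $ev:X\times\F\to(Y,\Delta)$ is itself an orbifold morphism, so one has a \emph{global} divisor decomposition $ev^{\ast}\Delta_j=\sum_i n_{i,j}E_{i,j}$ on $X\times\F$ with $n_{i,j}\geq m_j$; the local equations $h_{i,j}(z,t)$ of the $E_{i,j}$ then furnish the uniform factorization $F_j=\varphi_j\prod_i h_{i,j}^{n_{i,j}}$ directly. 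Without this input your argument has a hole. A secondary inaccuracy: you assert the ramification order is exactly the orbifold multiplicity $m_i$; in fact it is only bounded below by $m_i$ (it equals the generic minimal order over $\F$, which may exceed $m_i$). The computation you run still closes with $p_i\geq m_i$ in place of $m_i$, so this does not break the argument, but the statement as written is incorrect.
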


\begin{proof}
By \cref{X0-X1-finis}, $X\backslash \left(X_0\cup X_1\right)$ is a non-empty open set. We consider a point $x\in X\backslash \left(X_0\cup X_1\right)$.

We set $M=\lcm\{m_i\}$, where $m_i$ are the multiplicities in $\Delta=\sum_i \left(1-\frac{1}{m_i}\right) \Delta_i$. This choice of $M$ permits to contruct the bundle
    \[\left(\displaystyle\bigwedge^{\dim Y} TY -\Delta\right)^{\otimes M}.\]

By seting $\F=\{f_t:X\to (Y,\Delta)\}$, the data of $t$ defines local coordinates on $\F$ centered in $f_0$.
We give local coordinates $z_1,\dots, z_n$ on $X$ centered in $x \in X$, and $w_1,\dots, w_n$ on $Y$ centered in $y=f_0(x)$.

    We assume that $\Delta$ has $l$ distinct components around $y=f_0(x)$. We consider simple normal crossing divisor, so locally we can write \[\Delta = \left(1-\frac{1}{m_1}\right)\{w_1=0\}+\cdots + \left(1-\frac{1}{m_l}\right)\{w_l=0\}\]
    for a well-chosen system of local coordinates.
The evaluation morphism $ev$
\[\begin{array}{c|ccc}
    ev : & X \times \F & \longrightarrow & Y \\
     & (x,f) & \longmapsto & f(x)=(f_1(x),\cdots,f_n(x))
\end{array}\]
is given, in local coordinates, by 
\[\begin{array}{c|ccc}
    F : & \D^n \times \D & \longrightarrow & \D^n \\
     & (z_1,\cdots,z_n;t) & \longmapsto & F(z_1,\cdots,z_n;t)=(F_1(z_1,\cdots,z_n;t),\cdots,F_n(z_1,\cdots,z_n;t)).
\end{array}\]

    The pull-back by $F$ of a component $\Delta_j=\{w_j=0\}\in\lceil \Delta\rceil$ is given by
    \[F^\ast \left\{w_j=0\right\}= \sum_i n_{i,j} E_{i,j}\]
    with $n_{i,j}\geq m_j$ because the evalutation morphism $F$ is an orbifold morphism (\cref{ev-orbi}).
  
    We describe locally $E_{i,j}$ by setting \[E_{i,j}=\{h_{i,j}(z_1,\dots,z_n;t)=0\},\] 
    with $h_{i,j}$ a holomorphic map.

    If $x$ belongs to $k_{j}$ components of $F^\ast \left\{w_j\right\}$, one can write
    \[F_j(z_1,\dots,z_n;t)= \varphi_j(z_1,\dots,z_n;t) \prod_{i=1}^{k_{j}} h_{i,j}(z_1,\dots,z_n;t)^{n_{i,j}},\]
    with $\varphi_{j}$ a holomorphic map.

    We can write the partial derivative as follows
    \[\frac{\partial F_j}{\partial z_r} (z_1,\dots,z_n;t) = \psi_{j,r}(z_1,\dots,z_n;t)\prod_{i=1}^{k_{j}} h_{i,j}(z_1,\dots,z_n;t)^{n_{i,j}-1}, \]
    with $\psi_{j,r}$ some holomorphic maps.

    In local coordinates, the Jacobian of $F$ is given by 

\begin{align*}
    &JF(z_1,\dots,z_n;t)(e_1\wedge \dots \wedge e_n) =  \sum_{\nu \in \mathfrak{S}_n} \varepsilon(\nu) \prod_{s=1}^n \frac{\partial F_s}{\partial z_{\nu(s)}} (z_1,\dots,z_n;t)\\
         &=  \sum_{\nu \in \mathfrak{S}_n} \varepsilon(\nu) \prod_{s=1}^{l}\frac{\partial F_s}{\partial z_{\nu(s)}} (z_1,\dots,z_n;t)  \prod_{s=l+1}^n \frac{\partial F_s}{\partial z_{\nu(s)}} (z_1,\dots,z_n;t)\\
         \begin{split}
             &=\sum_{\nu \in \mathfrak{S}_n} \left[\varepsilon(\nu) \prod_{s=1}^{l}\left(\psi_{s,\nu(s)}(z_1,\dots,z_n;t)
        \prod_{i=1}^{k_{s}} h_{i,s}(z_1,\dots,z_n;t)^{n_{i,s}-1}\right)\right.\\
        & \hspace*{2cm} \times  \left.\prod_{s=l+1}^n \frac{\partial F_s}{\partial z_{\nu(s)}} (z_1,\dots,z_n;t)\right]
         \end{split}\\
\begin{split}
    &= \prod_{s=1}^{l}\left(\prod_{i=1}^{k_{s}} h_{i,s}(z_1,\dots,z_n;t)^{n_{i,s}-1}\right)\\
    &\hspace*{2cm} \times \sum_{\nu \in \mathfrak{S}_n} \left[ \varepsilon(\nu) \prod_{s=1}^{l}\psi_{s,\nu(s)}(z_1,\dots,z_n;t) \prod_{s=l+1}^n \frac{\partial F_s}{\partial z_{\nu(s)}} (z_1,\dots,z_n;t)\right]
\end{split}
\end{align*} 

We define the section $\sigma$ by $JF(z_1,\dots,z_n;t)(e_1\wedge \dots \wedge e_n)^{\otimes M}$. In local coordinates, its expression is 
    \begin{multline}\label{coord-loc-JFM}
        \prod_{s=1}^{l}\left(\prod_{i=1}^{k_{s}} h_{i,s}(z_1,\dots,z_n;t)^{n_{i,s}-1}\right)^M\\
    \times \left(\sum_{\nu \in \mathfrak{S}_n} \left[ \varepsilon(\nu) \prod_{s=1}^{l}\psi_{s,\nu(s)}(z_1,\dots,z_n;t) \prod_{s=l+1}^n \frac{\partial F_s}{\partial z_{\nu(s)}} (z_1,\dots,z_n;t)\right]\right)^{\otimes M}.
    \end{multline}

In the chosen system of coordinates, $\left(\displaystyle\bigwedge^{\dim Y} TY-\Delta\right)^{\otimes M}$ is generated by 
\[w_1^{\left(1-\frac{1}{m_1}\right) M}\cdots w_{l}^{\left(1-\frac{1}{m_{l}}\right) M}\left(\frac{\partial}{\partial w_1}\wedge \cdots \wedge\frac{\partial}{\partial w_{n}}\right)^{\otimes M}.\]
Recall that this line bundle is well defined because $M$ is defined by $M=\lcm\{m_i\}$, where $m_i$ are the multiplicities in $\Delta=\sum_i \left(1-\frac{1}{m_i}\right) \Delta_i$. 

The pull-back bundle $F^\ast \left(\displaystyle\bigwedge^{\dim Y} TY-\Delta\right)^{\otimes M}$ is given by
\begin{equation*}
\begin{split}
    &F_1(z_1,\dots,z_n;t)^{\left(1-\frac{1}{m_1}\right) M}\cdots F_{l}(z_1,\dots,z_n;t)^{\left(1-\frac{1}{m_{l}}\right) M} F^\ast \left(\frac{\partial}{\partial w_1}\wedge \cdots \wedge\frac{\partial}{\partial w_{n}}\right)^{\otimes M}\\
    &=\prod_{j=1}^{l} F_j(z_1,\dots,z_n;t)^{\left(1-\frac{1}{m_j}\right) M}F^\ast \left(\frac{\partial}{\partial w_1}\wedge \cdots \wedge\frac{\partial}{\partial w_{n}}\right)^{\otimes M}\\
    &=\prod_{j=1}^{l} \left(\varphi_j(z_1,\dots, z_n;t)^{\left(1-\frac{1}{m_j}\right) M}\prod_{i=1}^{k_{j}} h_{i,j}(z_1,\dots,z_n;t)^{n_{i,j}\left(1-\frac{1}{m_j}\right) M}\right) \\
    &\hspace{8.3cm} F^\ast \left(\frac{\partial}{\partial w_1}\wedge \cdots \wedge\frac{\partial}{\partial w_{n}}\right)^{\otimes M}.
\end{split}
\end{equation*} 
Since $n_{i,j}\geq m_j$, we have
\[n_{i,j}\left(1-\frac{1}{m_j}\right) M = \left(n_{i,j}-\frac{n_{i,j}}{m_j}\right)M \leq (n_{i,j}-1)M,\]
where $(n_{i,j}-1)M$ corresponds to the multiplicity in the equation~\lref{coord-loc-JFM}.

Thus, we conclude that
\[\sigma \in H^0\left(\F, ev_x^\ast \left(\displaystyle\bigwedge^{\dim Y} TY-\Delta\right)^{\otimes M}\right).\]
\end{proof}

As a direct consequence, we have the following result.

\begin{prop}\label{intersection-non-pos-fort}
    Let $X$ be a projective variety and $(Y,\Delta)$ be a smooth compact orbifold. Assume that $\dim X=\dim Y$. Assume that there exists a smooth projective 1-dimensional family $\F$ such that there exists a non-constant holomorphic map $\varphi:\F\to \Sur(X,(Y,\Delta))$. Then for every general point $x\in X$, the curve $ev_x \varphi(\F)$ intersects non positively the orbifold canonical bundle.
\end{prop}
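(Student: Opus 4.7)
The proof will be essentially a direct application of \cref{construction-section-sur-F} combined with the elementary observation that a line bundle on a smooth projective curve admitting a non-zero global section must have non-negative degree. The key point is that the line bundle $\left(\bigwedge^{\dim Y} TY-\Delta\right)^{\otimes M}$ appearing there is exactly the dual of the $M$-th power of the orbifold canonical bundle $K_Y+\Delta$, so producing a section of its pullback translates directly into a non-negativity statement for the degree of $ev_x \circ \varphi$ against $-(K_Y+\Delta)$.

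First, I would apply \cref{construction-section-sur-F} to the 1-dimensional family $\varphi(\F) \subset \Sur(X,(Y,\Delta))$, or equivalently pull everything back along $\varphi$. Because $\varphi$ is non-constant and $\F$ is a smooth projective curve, this produces, for every sufficiently general point $x \in X$, a non-zero holomorphic section
\[
\sigma \in H^0\!\left(\F,\, (ev_x\circ\varphi)^\ast\!\left(\textstyle\bigwedge^{\dim Y} TY - \Delta\right)^{\otimes M}\right).
\]
Since the line bundle $\bigl(\bigwedge^{\dim Y}TY - \Delta\bigr)^{\otimes M}$ is by definition $\mathcal{O}_Y\bigl(-M(K_Y+\Delta)\bigr)$ (this is the very reason $M=\lcm\{m_i\}$ is chosen: it makes the $\Q$-divisor $M(K_Y+\Delta)$ integral), the section $\sigma$ lives in $H^0\bigl(\F, (ev_x\circ\varphi)^\ast\mathcal{O}_Y(-M(K_Y+\Delta))\bigr)$.

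Next, since $\F$ is a smooth projective curve, the existence of a non-zero global section $\sigma$ of a line bundle $L$ on $\F$ forces $\deg_\F L \geq 0$. Applying this and using the projection formula yields
\[
0 \leq \deg_\F\!\bigl((ev_x\circ\varphi)^\ast\mathcal{O}_Y(-M(K_Y+\Delta))\bigr) = -M \cdot (ev_x\circ\varphi)_\ast[\F]\cdot (K_Y+\Delta).
\]
Dividing by $M>0$ gives $ev_x(\varphi(\F))\cdot (K_Y+\Delta)\leq 0$, which is the claimed non-positive intersection against the orbifold canonical bundle.

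The only subtlety — and the only thing that deserves more than a line — is ensuring that $x$ is general enough for the whole machinery to apply: we must choose $x$ in the complement of the closed sets $X_0$ and $X_1$ from \cref{X0-X1-finis}, and generic enough that $\sigma$ is not identically zero on $\F$ and that $ev_x\circ\varphi$ is not a constant map (so that $(ev_x\circ\varphi)_\ast[\F]$ is a bona fide curve class on $Y$). All of these conditions are Zariski-open on $X$ and non-empty by the hypothesis that $\varphi$ is non-constant, hence satisfied on a non-empty Zariski-open set of points, which is exactly the meaning of \textquotedblleft for every general point $x\in X$\textquotedblright{} in the statement.
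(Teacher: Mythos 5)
Your argument coincides with the paper's: both apply \cref{construction-section-sur-F} to produce a non-zero section of $ev_x^{\ast}\left(\bigwedge^{\dim Y}TY-\Delta\right)^{\otimes M}$ over the curve $\varphi(\F)$, observe that a line bundle on a projective curve with a non-zero section has non-negative degree, and then pass to $ev_x(\varphi(\F))$ via the projection formula to get $\deg\left(K_Y+\Delta\right)^{\otimes M}\leq 0$. Your added remarks on the Zariski-openness of the genericity conditions on $x$ (the complement of $X_0\cup X_1$, non-vanishing of $\sigma$, non-constancy of $ev_x\circ\varphi$) make explicit what the paper compresses into ``a well-chosen point,'' but the route is the same.
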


\begin{proof}
Since there exists a non-constant holomorphic map $\varphi:\F\to \Sur(X,(Y,\Delta))$, we obtain a curve $\varphi(\F) \subset \Sur(X,(Y,\Delta))$.
    We apply \cref{construction-section-sur-F}. We can construct a section 
\[\sigma \in H^0\left(\varphi(\F), ev_x^\ast \left(\displaystyle\bigwedge^{\dim Y} TY-\Delta\right)^{\otimes M}\right),\]
for a well-chosen integer $M\in \N$ and $x\in X$, a well-chosen point.

Hence the degree of the line bundle $ev_x^\ast\left(\bigwedge^{\dim Y} TY-\Delta\right)^{\otimes M}$ is non negative, \ie
    \begin{align*}
\deg_{\varphi(\F)} ev_x^\ast \left( \bigwedge^{\dim Y} TY-\Delta\right)^{\otimes M}\geq0.\\
\intertext{By definition $\bigwedge^{\dim Y} TY^\ast = K_Y$, and using additive notations, we have}
\deg_{\varphi(\F)} ev_x^\ast \left(-( K_Y+\Delta)\right)^{M}\geq 0\\
\intertext{By the projection formula, then we obtain}
\deg_{ev_x({\varphi(\F)})}\left(-( K_Y+\Delta)\right)^{M}\geq 0,
\end{align*}
which can be restated as
\begin{equation}
    \deg_{ev_x({\varphi(\F)})}\left( K_Y+\Delta\right)^{M}\leq0.
\end{equation}

Therefore $ev_x({\varphi(\F)})$ intersects non positively $(K_Y+\Delta)^{\otimes M}$.
\end{proof}

\section{Finiteness result for hyperbolic orbifold pairs with big orbifold canonical bundle}

In this section, we will prove a new rigidity result based on analytic tools. More precisely, we consider a positivity condition on the orbifold canonical bundle, namely the fact that it is big.

This result was already proved in \cite{bartsch_kobayashi-ochiais_2024}. Here, we use analytic methods.

\begin{prop}\label{zero-dim-big}
 Let $X$ be a projective variety. Let $(Y,\Delta)$ be a smooth compact orbifold.
 If $(Y,\Delta)$ is hyperbolic and if the orbifold canonical bundle $K_Y+\Delta$ is big, then the set of surjective holomorphic orbifold morphisms from $X$ to $(Y,\Delta)$ is zero-dimensional.
\end{prop}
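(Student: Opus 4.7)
The plan is to argue by contradiction using the bigness of $K_Y+\Delta$ together with the intersection-theoretic input of \cref{intersection-non-pos-fort}.

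\textbf{Step 1: reduction to a one-parameter family.} Suppose for contradiction that $\Sur(X,(Y,\Delta))$ has positive dimension. By the reduction of Section~5 I may assume every morphism under consideration is finite, so in particular $\dim X = \dim Y$. Picking a positive-dimensional component of $\Sur(X,(Y,\Delta))$ and cutting it by sufficiently many hyperplane sections, followed by normalization and desingularization, yields a smooth projective curve $\F$ together with a non-constant holomorphic map $\varphi : \F \to \Sur(X,(Y,\Delta))$.

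\textbf{Step 2: non-positive intersection from the family.} By \cref{intersection-non-pos-fort} applied to $\varphi$, for every general $x \in X$ the curve $ev_x(\varphi(\F)) \subset Y$ satisfies
\[(K_Y+\Delta)\cdot ev_x(\varphi(\F)) \leq 0.\]

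\textbf{Step 3: bigness produces a positive intersection on generic curves.} Bigness of $K_Y+\Delta$ yields, by Kodaira's lemma, a decomposition $K_Y+\Delta \equiv_{\Q} A + E$ with $A$ an ample $\Q$-divisor and $E$ an effective $\Q$-divisor; the support $\Supp(E)$ is then a proper Zariski-closed subset of $Y$. The total evaluation
\[ev : X \times \F \longrightarrow Y, \qquad (x,t) \longmapsto \varphi(t)(x),\]
is surjective since every map in $\Sur(X,(Y,\Delta))$ is surjective. Consequently $ev^{-1}(Y \setminus \Supp(E))$ is a non-empty open subset of $X \times \F$, and its image under the (open) projection $X \times \F \to X$ is a dense open subset $U \subset X$.

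\textbf{Step 4: contradiction.} Choose $x$ in $U \setminus (X_0 \cup X_1)$, which is non-empty since $X_0, X_1$ are proper closed by \cref{X0-X1-finis} and $X$ is irreducible. For such an $x$, the image $ev_x(\varphi(\F))$ is a genuine curve not contained in $\Supp(E)$, so $E \cdot ev_x(\varphi(\F)) \geq 0$; combined with $A \cdot ev_x(\varphi(\F)) > 0$ by ampleness this forces
\[(K_Y+\Delta) \cdot ev_x(\varphi(\F)) > 0,\]
contradicting Step~2. Hence $\Sur(X,(Y,\Delta))$ has no positive-dimensional component, and is therefore zero-dimensional.

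\textbf{Main obstacle.} The substantive analytic content is already concentrated in \cref{intersection-non-pos-fort}; the remaining work is the standard extraction of a smooth projective one-parameter family in Step~1, and the geometric verification in Step~3 that the curves $ev_x(\varphi(\F))$ sweep out enough of $Y$ to avoid $\Supp(E)$ for a general $x$. The subtlest bookkeeping is checking that the various generic conditions on $x$ — lying outside $X_0 \cup X_1$ and outside the locus where $ev_x(\varphi(\F)) \subset \Supp(E)$ — can be imposed simultaneously, which follows from the irreducibility of $X$.
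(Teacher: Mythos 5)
Your proposal is correct and follows essentially the same route as the paper: contradiction, cutting the positive-dimensional component to a one-dimensional family, Kodaira's lemma to write $K_Y+\Delta$ as ample plus effective, and then invoking \cref{intersection-non-pos-fort} after choosing a general $x$ so that $ev_x(\varphi(\F))$ meets $Y\setminus\Supp(E)$. Your Step~3 (pulling back $Y\setminus \Supp(E)$ under the total evaluation and projecting to a dense open $U\subset X$) makes the simultaneous genericity of $x$ more transparent than the paper's phrasing, which picks $y\notin E$ and then $x\in f^{-1}(y)$, but the underlying idea is identical.
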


\begin{proof}
We argue by contradiction. Assume that there exists an irreducible component $\F$ of $\Sur(X,(Y,\Delta))$, such that $\dim \F\geq 1$.
Since we consider algebraic varieties, we can intersect $\F$ with sufficiently many hyperplanes, in such a way that $\dim \F= 1$.
By  the characterization of big divisors (see \cite[Corollary~2.2.7.]{lazarsfeld_positivity_2004-1}), if the orbifold canonical divisor $K_Y+\Delta$ is big, then there exists an ample divisor $A$, a positive integer $m>0$, and an effective divisor $E$ such that $m(K_Y+\Delta)\equiv_{\text{num}}A+E$. 
Therefore, \[m(K_Y+\Delta)\cdot C = m(A+E)\cdot C= mA\cdot C + mE\cdot C,\] where $C=ev_x(\F)$, $x$ being a general point.

Since $A$ is ample, $A\cdot C>0$. Let us show, by adding restriction on the choice of the point $x\in X$ define in \cref{construction-section-sur-F}, that we can have $E\cdot C\geq 0$.
Let $y \notin E$. There exists a surjective map $f$ such that $f^{-1}({y})\neq \emptyset$. Let $x\in f^{-1}({y})$. In the proof of \cref{construction-section-sur-F}, we gave conditions about the choice of $x$. More precisely, we chose $x$ in the open set $X\backslash (X_0\cup X_1)$ (see \cref{X0-X1-finis}). Here, we can choose $x\in X$ satisfying the same conditions.

\[m(K_Y+\Delta)\cdot C = m(A+E)\cdot C= m\underbrace{A\cdot C}_{>0} + m\underbrace{E\cdot C}_{\geq 0}> 0.\]
It leads to a contradiction with \cref{intersection-non-pos-fort}, and thus we conclude the proof of \cref{zero-dim-big}.
\end{proof}

\begin{thm}\label{finite-big}
       Let $X$ be a projective variety and $(Y,\Delta)$ be a compact orbifold pair. If $(Y,\Delta)$ is hyperbolic and if $K_Y+\Delta$ is big, then $\Sur(X,(Y,\Delta))$ is finite.
\end{thm}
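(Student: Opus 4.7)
The plan is to combine the two main results of the previous sections in exactly the same way as in the proof of \cref{finite-unif+semipos}. This final theorem is a formal consequence of a compactness statement and a zero-dimensionality statement, both already available.

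First, I would invoke \cref{Sur-compact} to see that $\Sur(X,(Y,\Delta))$ is compact. The hypothesis $\lfloor \Delta_X \rfloor = \emptyset$ is trivially satisfied here, since $X$ is a (bare) variety with no boundary divisor, and $(Y,\Delta)$ is a compact hyperbolic orbifold by assumption. So the corollary applies verbatim and gives compactness of $\Sur(X,(Y,\Delta))$ inside $\Hol(X,Y)$.

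Next, I would apply \cref{zero-dim-big}, which under the bigness assumption on $K_Y+\Delta$ (together with hyperbolicity) asserts that $\Sur(X,(Y,\Delta))$ is zero-dimensional, i.e.\ every irreducible component is reduced to a single point. Combining this with the previous step, $\Sur(X,(Y,\Delta))$ is a compact complex space whose irreducible components are all singletons; hence it is a compact set of isolated points, which is necessarily finite.

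There is no genuine new obstacle at this stage: the heart of the argument lies in \cref{zero-dim-big}, which itself rests on the construction of the section $\sigma$ in \cref{construction-section-sur-F} and the non-positive intersection consequence \cref{intersection-non-pos-fort}, together with the characterization of bigness \`a la Kodaira as $m(K_Y+\Delta)\equiv_{\mathrm{num}} A+E$. Once those rigidity inputs are in hand, finiteness follows by the same two-line argument as in \cref{finite-unif+semipos}.
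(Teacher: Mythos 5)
Your proposal matches the paper's own proof essentially verbatim: compactness from \cref{Sur-compact}, zero-dimensionality from \cref{zero-dim-big}, and the observation that a compact zero-dimensional set is finite. Nothing further to add.
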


\begin{proof}
By \cref{Sur-compact}, it suffices to show that each irreducible component of $\Sur(X,(Y,\Delta))$ is a one-point set.
Then, by \cref{zero-dim-big},
one can see that each irreducible component of $\Sur(X,(Y,\Delta))$ is zero-dimensional. Then $\Sur(X,(Y,\Delta))$ contains only a finite number of one-point sets, and so $\Sur(X,(Y,\Delta))$ is finite.
\end{proof}

\section{Curves intersecting negatively the orbifold canonical bundle}\label{section-inters-neg}

In this section, we will show that curves constructed at the end of \cref{section-intersection-non-pos} do not only intersect non positively the orbifold canonical bundle but even negatively.

In this section, we still consider $X$ a projective variety and $(Y,\Delta)$ a smooth compact orbifold pair, with $\Delta=\sum\left(1-\frac{1}{m_i}\right) \Delta_i$. Let $\F$ be an irreducible component of $\Sur(X,(Y,\Delta))$, the set of surjective orbifold maps.

We define the degeneracy locus $\mathcal{D}$ by the zero locus of the Jacobian as a section of the determinant of the orbifold tangent bundle.
\[\mathcal{D}_{0,\text{orb}} = \left\{(x, g) \in X \times \F\, \left|\, \, \sigma_g(x)=0\right.\right\},\]
where $\sigma_g\in H^0\left(X,\left(\bigwedge^{\dim Y} TY-\Delta\right)^{\otimes M} \right)$, defined by $\sigma_g(x)=Jg(x)^{\otimes M}$ for $(x,g)\in X\times \F$, and
\[\mathcal{D}_\text{orb}=\overline{\mathcal{D}_{0,\text{orb}}}\subset X\times \F.\]
 We use the natural projection $\pi_X$ and the evaluation map $ev$:
\[\pi_X:X \times \F \to X  \quad\text{and} \quad
ev:X \times \F \to (Y,\Delta).\]
Let us prove an important lemma.

\begin{lem}\label{degeneracy-locus}
If $(Y,\Delta)$ is hyperbolic, then $\pi_X(\mathcal{D}_\text{orb})=X$.
\end{lem}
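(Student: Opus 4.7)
The plan is to argue by contradiction. Assume $\pi_X(\mathcal{D}_\text{orb})\subsetneq X$. Since $X\times \F$ is projective, the projection $\pi_X$ is proper and in particular closed, so $\pi_X(\mathcal{D}_\text{orb})$ is a proper closed subset of $X$ and admits a non-empty open complement $U$. For any $x_0\in U$, every $g\in \F$ satisfies $\sigma_g(x_0)\neq 0$, which means $g$ is étale at $x_0$.

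The first key observation is that $g(x_0)\notin \lceil \Delta\rceil$ for every $g\in \F$. Indeed, if $g(x_0)$ lay on a component $\Delta_i$ with orbifold multiplicity $m_i\geq 2$, the local biholomorphism $g$ near $x_0$ would pull $\Delta_i$ back to a smooth divisor of multiplicity exactly $1$ through $x_0$. This would contradict the orbifold morphism condition from \cref{def-orb-map}, which requires this multiplicity to be at least $m_i$. Consequently, the evaluation $ev_{x_0}:\F\to Y$ factors through $Y\setminus \lceil\Delta\rceil$, and any holomorphic disc in $\F$ composed with $ev_{x_0}$ is automatically a genuine orbifold morphism $\D\to (Y,\Delta)$ (no ramification condition arises).

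The main step is to upgrade this to a non-constant orbifold entire curve $\C\to (Y,\Delta)$, contradicting the Brody-hyperbolicity implied by the Kobayashi-hyperbolicity of $(Y,\Delta)$. The lemma will be applied in the context where $\dim \F\geq 1$, so we pick a non-constant holomorphic disc $\alpha:\D\to \F$ through some $g_0$. Either for some choice of $x_0\in U$ the composition $ev_{x_0}\circ \alpha$ is non-constant, in which case by varying $x_0\in U$ and $\alpha$, using the compactness of $\Sur((X,\Delta_X),(Y,\Delta_Y))$ from \cref{Sur-compact} together with a Brody-type reparametrization, we extract a non-constant orbifold entire curve; or $ev_x\circ \alpha$ is constant for every $x\in U$, in which case all maps in $\alpha(\D)$ coincide on the open set $U$ and hence, by analytic continuation, on all of $X$, forcing $\alpha$ to be constant, a contradiction.

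The main obstacle I anticipate is the extraction step: guaranteeing that the Brody limit remains in $Y\setminus \lceil\Delta\rceil$, or at least interacts with $\Delta$ in the ramified way required by \cref{def-orb-map}, so that the limit does define an orbifold entire curve. This is where the precise local geometry of étale orbifold morphisms near $\lceil\Delta\rceil$ and the closedness property \cref{morph-orbi-ferme} of the space of orbifold morphisms must intervene.
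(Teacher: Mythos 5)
Your ``first key observation'' is not correct, and the argument breaks there. The degeneracy locus $\mathcal{D}_\text{orb}$ is defined via $\sigma_g(x) = Jg(x)^{\otimes M}$ regarded as a section of the pull-back of $\bigl(\bigwedge^{\dim Y} TY - \Delta\bigr)^{\otimes M}$, \emph{not} of $\bigl(\bigwedge^{\dim Y} TY\bigr)^{\otimes M}$. The twist by $-\Delta$ is precisely designed (see the computation in \cref{construction-section-sur-F}) so that the section does \emph{not} automatically vanish along the preimage of $\lceil\Delta\rceil$: the vanishing of $(Jg)^{\otimes M}$ of order $(n_{i,j}-1)M$ along a component $E_{i,j}$ of $g^\ast\Delta_j$ is compensated by the factor $h_{i,j}^{\,n_{i,j}(1-1/m_j)M}$ coming from the twist, and the quotient is nonzero exactly when $g$ ramifies over $\Delta_j$ with the \emph{minimal} multiplicity $n_{i,j} = m_j$. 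Thus $(x_0,g)\notin\mathcal{D}_\text{orb}$ means that $g$ is \emph{orbi-\'etale} at $x_0$, not \'etale in the classical sense; a map $g\in\F$ can perfectly well send $x_0$ into $\lceil\Delta\rceil$ while $\sigma_g(x_0)\neq 0$, and since each $g$ is surjective this genuinely happens. Your deduction that $g(x_0)\notin\lceil\Delta\rceil$ for all $g\in\F$, and the consequent claim that $ev_{x_0}$ factors through $Y\setminus\lceil\Delta\rceil$, are therefore unjustified, and everything downstream of that claim fails.

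Even setting this aside, the second half of your argument is a sketch rather than a proof: you invoke ``compactness together with a Brody-type reparametrization'' to extract a non-constant orbifold entire curve from discs $ev_{x_0}\circ\alpha$, but you give no reason why a sequence of such discs should have diverging derivatives to feed into Brody's lemma, nor (as you acknowledge) why a Brody limit would remain an orbifold morphism. The paper's proof takes an entirely different route, adapting Kobayashi's vector-field argument: the infinitesimal deformation $\zeta\in T_f\F$ is propagated, using orbi-\'etaleness off $B=\pi_X(\mathcal{D}_\text{orb})$ and the compactness of $\F$, to a single-valued section of $ev^\ast TY$ over all of $X\times\F$, which descends to a nonzero global vector field $\rho$ on $Y$. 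A local computation using the orbifold ramification of each $g$ shows $\rho$ is tangent to $\lceil\Delta\rceil$, i.e.\ $\rho\in H^0(Y,TY(-\log\lceil\Delta\rceil))$, contradicting hyperbolicity of $(Y,\Delta)$ via \cref{lemma-champs-vecteurs-log}. The entire curve in that argument is produced cleanly by the flow of the complete vector field $\rho$, not by a Brody extraction; this is the step you would need to supply and which your scheme does not deliver.
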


Before, we prove this lemma.
\begin{lem}\label{lemma-champs-vecteurs-log}
    Let $Y$ be a compact variety and $D$ be a divisor. If $(Y,D)$ is hyperbolic, then $H^0(Y,T_Y(-\log D))=\{0\}$.
\end{lem}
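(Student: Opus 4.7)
The plan is to produce, from a hypothetical non-zero section of $T_Y(-\log D)$, a non-constant orbifold entire curve $\C \to (Y,D)$, which would then contradict the hyperbolicity assumption via the distance-decreasing property.

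First, I would start with a non-zero section $v \in H^0(Y, T_Y(-\log D))$, viewed as a global holomorphic vector field on $Y$ that is tangent to every irreducible component of the support $\lceil D \rceil$ (this is the defining feature of a logarithmic vector field: it acts on the ideal sheaf of $\lceil D \rceil$ sending it into itself). Since $Y$ is compact, $v$ is complete and integrates to a holomorphic $\C$-action $\phi \colon \C \times Y \to Y$ by biholomorphisms. The tangency condition ensures that each time-$t$ map $\phi_t$ preserves $\lceil D \rceil$ setwise, and therefore also preserves its complement $Y \setminus \lceil D \rceil$. Because $v \not\equiv 0$ and its zero locus is a proper analytic subset of $Y$, I can pick a base point $y_0 \in Y \setminus \lceil D \rceil$ with $v(y_0)\neq 0$; the orbit map $\gamma \colon \C \to Y$, $t \mapsto \phi_t(y_0)$, is then a non-constant holomorphic map taking values in $Y \setminus \lceil D \rceil$.

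Since $\gamma(\C) \cap \lceil D \rceil = \emptyset$, the pullback $\gamma^\ast D_i$ is the zero divisor for every component $D_i$ of $D$, so the ramification inequalities in \cref{def-orb-map} are vacuously satisfied, and $\gamma$ is an orbifold morphism from $\C$ (with empty orbifold divisor) to $(Y, D)$. The distance-decreasing principle (\cref{distance-decreasing-orbi}), applied after pulling back to arbitrary disks in $\C$ and letting their radii tend to infinity, then forces $\gamma^\ast d_{(Y,D)} \equiv 0$. This contradicts the fact that $d_{(Y,D)}$ is a genuine distance on $Y \setminus \lfloor D \rfloor$, because the two distinct points $y_0$ and $\phi_t(y_0)$ for suitable $t$ would have to be at zero distance. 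Hence no such $v$ exists, and $H^0(Y, T_Y(-\log D)) = \{0\}$. The main technical point to nail down carefully is the interplay between the algebraic definition of $T_Y(-\log D)$ and the analytic assertion that its global sections generate flows preserving $\lceil D \rceil$; this is classical but deserves a sentence of justification via the local description of logarithmic vector fields as $\sum a_i z_i \partial_{z_i} + \sum b_j \partial_{z_j}$ in adapted coordinates around $\lceil D \rceil$.
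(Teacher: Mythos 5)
Your proof is correct and follows essentially the same strategy as the paper: integrate a hypothetical non-zero logarithmic vector field to a flow on the compact variety $Y$, observe that the tangency condition to $\lceil D\rceil$ keeps the orbit through a point of $Y\setminus\lceil D\rceil$ inside $Y\setminus\lceil D\rceil$, and conclude via hyperbolicity that this non-constant entire curve cannot exist. You supply a few details the paper leaves implicit — the choice of base point off both $\lceil D\rceil$ and the zero locus to ensure non-constancy, and the explicit invocation of the distance-decreasing property rather than simply asserting that $Y\setminus D$ is hyperbolic — but the argument is the same one.
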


\begin{proof}
    We argue by contradiction. Let $\xi \in H^0(Y,TY(-\log D))$, $\xi \neq 0$. We consider $\xi$ as a section of $TY$, \ie $\xi \in H^0(Y,TY)$. We recall that every vector field on a compact variety is complete. Then, since $Y$ is compact, $\xi$ is complete, and so the flow gives entire curves on $Y$.
    We denote by $\gamma$ one of this entire curve, which satisfies
    \[\left\{\begin{array}{rl}
        \gamma(t)&=\xi (\gamma(t))\\
        \gamma(0)&=y_0
    \end{array}\right.\]
    with $y_0\in Y$. 
    Since $\xi \in H^0(Y,TY(-\log D))$, if we consider $y_0\notin D$, $\gamma(t)\notin D$ for all $t\in \C$. Therefore, the flow defines an entire curve 
    \[\gamma:\C\to Y\backslash D.\]
    Since $Y\backslash D$ is hyperbolic, we obtain a contradiction.
\end{proof}

In the non-orbifold case, \cref{degeneracy-locus} is proved in \cite[Lemma~(6.6.4)]{kobayashi_hyperbolic_1998}.
Now, we adapt the proof in the orbifold case.

\begin{proof}[Proof of \cref{degeneracy-locus}]
Fix $f\in \F$. At the beginning of Section 3 of Chapter 5 of \cite{kobayashi_hyperbolic_1998}, Kobayashi defined an injective map $\nu_f:T_f\F \to H^0(X,f^\ast TY)$. If $f_t$ is a curve in $\F$ such that $f_0=f$ and if $\zeta\in T_f\F$ is a nonzero vector defined by $\zeta=\left.\frac{\partial f_t}{\partial t}\right|_{t=0} \in T_f\F$, then \[(\nu_f(\zeta))(x)=\left.\frac{\partial f_t}{\partial t}\right|_{t=0} \in T_{f(x)}\F.\]
More formally, using the differential
\[ev_*: TX\times T\F \to TY\]
of the evaluation map $ev$, we set 
\[(\nu_f(\zeta))(x)=ev_*(0_x,\zeta)\in T_{f(x)}Y, \quad x\in X, \zeta \in T_f\F,\]
where $0_x$ stands for the zero vector at $x$.
We set $\widetilde{\zeta}=\nu_f(\zeta) \in H^0(X,f^\ast TY)$. The section $\widetilde{\zeta}$ of $f^*TY$
defines a multi-valued section $\widehat{\zeta}$ of $TY$, \ie
\[\widehat{\zeta}(y)=\left\{\widetilde{\zeta}(x)\in T_yY \,|\, x\in f^{-1}(y)\right\}.\]

Let $\widehat{Y} = \widehat{\zeta}(Y)\subset TY$. Then the natural projection $\widehat{Y}\to Y$ is a finite 
surjective map. Let $\widetilde{ev}: ev^\ast TY \to TY$ be the natural map (which identifies the fibre $T_{g(x)}Y$
of $ev^\ast TY$ at $(x, g) \in X \times \F$ with the fibre $T_{g(x)}Y$ of TY at $g(x) \in Y$). Then
$\widetilde{ev}^{-1}(\widehat{Y}) \to X \times \F$ is a finite surjective map. We may view $\widetilde{ev}^{-1}(\widehat{Y})$ as a multivalued
section of $ev^\ast TY$ over $X \times \F$.

On the other hand, viewing $f^\ast TY$ as the restriction of the pull-back bundle
$ev^\ast TY$ to $X \times \{f\}$ we consider $\widetilde{\zeta}$ as a section of $ev^\ast TY$ over $X \times \{f\}$, and we
shall extend $\widetilde{\zeta}$ to a holomorphic section $\widetilde{\zeta}$ of $ev^\ast TY$ over $(X \backslash A) \times \F$, where
\[A = \{x \in X \,|\, f(x) \in Y, \sigma_f(x) = 0\} .\]
We define $v(x) = f_\ast^{-1}(\widetilde{\zeta}(x)) \in T_xX$ for $x \in X \backslash A$ and
\begin{equation}\label{6.6.5}
\widetilde{\zeta}(x, g) = g_\ast(v(x)) \in T_{g(x)}Y, \quad (x, g) \in  (X \backslash A) \times \F.
\end{equation}
Thus we have a single valued section $\widetilde{\zeta}$ of $ev^\ast TY$ defined only on $(X \backslash A) \times \F$,
as well as a multi-valued section $\widetilde{ev}^{-1} (\widehat{Y})$ of $ev^\ast TY$ defined on the whole of $X \times \F$.

Now, suppose that $\pi_X(\mathcal{D}_{\text{orb}})\neq X$, and set
\[B = \pi_X(\mathcal{D}_{\text{orb}}).\]
Clearly, $A \subset B$. Then for every $g\in \F$, the section $\sigma_g$ is nonzero in $x\in B$, \ie each map $g\in F$ is orbi-étale in each point $x\in B$. Hence, by \cref{ev-orbi}, one can see that $ev$ is orbi-étale on $B\times \F$. So we can consider the bundle $T(ev,\Delta)$. 
Since $T_{g(x)}Y$ is the fibre of the pull-back bundle $ev^\ast TY$ at $(x, g)\in X \times \F$, we see that
$\left.T(ev,\Delta)\right|_{\{x\}\times \F}$ is isomorphic to the product bundle over $\{x\} \times \F$ with fibre $T_x X$,
\ie
\begin{equation}\label{6.6.6}
  \left.T(ev,\Delta)\right|_{\{x\}\times \F} \simeq \left(\{x\} \times \F\right) \times T_x X,\quad X\in X\backslash B.  
\end{equation}

Fix $x \in X \backslash B$. Since the bundle $T(ev,\Delta)$ restricted to $\{x\} \times \F$ is a product bundle
$(\{x\} \times \F) \times T_x X$ by \lref{6.6.6} and since $\{x\} \times F$ is compact, the restriction of the mutlivalued
section $\widetilde{ev}^{-1}(\widehat{Y})$ to $\{x\} \times \F$ consists of constant sections $(\{x\} \times \F) \times \sigma_i (x)$,
$i = 1, \dots , m,$ where $\sigma_i(x) \in T_xX$ is independent of $g \in \F$.

On the other hand, since $\widetilde{\zeta}(x) \subset \widehat{\zeta}(f(x))$, $\widetilde{\zeta}(X)$ is contained in the mutlivalued
section $\widetilde{ev}^{- 1} (\widehat{Y})$. So by renumbering $\sigma_1, \dots ,\sigma_m$ we may assume that
\[\widetilde{\zeta}(x) = (x, f, \sigma_1 (x)) \in \{x\} \times \F \times T_x X,\quad x \in X\backslash B.\]
Since the extension $\widetilde{\zeta}$ of $\widetilde{\zeta}$ to $(X \backslash B) \times \F$ defined by \lref{6.6.5} is nothing but the
extension of $\widetilde{\zeta}$ by the trivialization \lref{6.6.6}, we have
\[\{(x, g) = (x, g, \sigma_1(x)) \in \{x\} \times \F \times T_x X,\quad x \in X\backslash B.\]
Since $\widetilde{\zeta}((X \backslash B) \times \F)$ is contained in $\widetilde{ev}^{-1} (\widehat{Y})$, $\widetilde{\zeta}$ extends to $X \times \F$.
We denote this extended section of $ev^\ast TY$ over $X \times \F$ by the same symbol
$\widetilde{\zeta}$. For each $y \in Y$, we consider a mapping $ev^{-1}(y) \longrightarrow T_y Y$ which sends $(x, g) \in
ev^{-1}(y)$ to $g_\ast(\widetilde{\zeta}(x,g)) \in T_yY$. This map is constant since $ev^{-1}(y)$ is connected
and compact. Thus we obtain a non-trivial holomorphic vector field on $Y$.

We denote it by $\rho \in H^0(Y,TY)$, defined by \[\rho(y)=g_\ast \left(\widetilde{\zeta}(x,g)\right)\] with $(x,g)\in ev^{-1}(y)$, for each $y\in Y$.

In this case, this pushforward is not only a holomorphic vector field on $Y$ but also a section of logarithmic tangent bundle $TY(-\log \lceil \Delta\rceil)$ over $Y$, \ie \[\rho(y) \in H^0(Y, TY(-\log \lceil \Delta\rceil)).\]

Let us prove this. Let $x$ be a point in $X$ and $g \in \F$.
We give local coordinates $z_1,\dots, z_n$ on $Y$ centered in $y=g(x)\in Y$.

    We assume that $\Delta$ has $l$ distinct components around $y=g(x)$. We consider simple normal crossing divisor, so locally we can write \[\Delta = \left(1-\frac{1}{m_1}\right)\{z_1=0\}+\cdots + \left(1-\frac{1}{m_l}\right)\{z_l=0\}\]
    for a well-chosen system of local coordinates, so that $\lceil\Delta\rceil$ is given by $\{z_1\cdots z_l=0\}$.
    
    Then, the logarithmic tangent bundle $TY(-\log \lceil \Delta\rceil)$ is generated by \[\left \langle z_1 \frac{\partial}{\partial z_1}, \dots, z_l \frac{\partial}{\partial z_l}, \frac{\partial}{\partial z_{l+1}},\dots,  \frac{\partial}{\partial z_n} \right \rangle.\]

 By construction, the section $\widetilde{\zeta}$ is given by some combination of elements of the form $g^\ast \frac{\partial}{\partial z_i}$.
The map $g$ is an orbifold map, so we may assume that $g$ is locally given by 
    $(z_1,\cdots, z_n)\mapsto (z_1^{n_1}, \dots, z_l^{n_l},z_{l+1}, \dots, z_n)$, with $n_i\geq m_i$. 

Since $g_\ast \widetilde{\zeta}$ is nothing but the differential of $g$ along $\widetilde{\zeta}$, then we can compute it in the following way:
    \[ d g\left(g^\ast \frac{\partial}{\partial z_i}\right)= m_i z_i^{m_i-1} \frac{\partial}{\partial z_i} .\]
    Thus if $m_i \geq 2$, this complete the proof that the section $\rho$ is a non-zero section of the logarithmic tangent bundle $TY(-\log \lceil \Delta \rceil)$ over $Y$. Since $(Y,\Delta)$ is hyperbolic, so is $(Y,\lceil \Delta\rceil)$. By \cref{lemma-champs-vecteurs-log},  $H^0(Y,TY(-\log \lceil \Delta \rceil))= \{0\}$, and thus we get a contradiction.
     \end{proof}

This lemma gives that the section, constructed in \cref{construction-section-sur-F}, has zeros. Hence the degree of the line bundle $ev_x^\ast\left(\bigwedge^{\dim Y} TY-\Delta\right)^{\otimes M}$ is positive. Therefore, we can strengthen the previous result (\cref{intersection-non-pos-fort}), improving the proof  with strict inequalities.

\begin{prop}\label{inters-neg-fort}
    Let $X$ be a projective variety and $(Y,\Delta)$ be a smooth compact orbifold. Assume that $\dim X=\dim Y$. Assume that there exists a smooth projective 1-dimensional family $\F$ such that there exists a non-constant holomorphic map $\varphi:\F\to \Sur(X,(Y,\Delta))$. If $(Y,\Delta)$ is hyperbolic, then for every general point $x\in X$, the curve $ev_x \varphi(\F)$ intersects negatively the orbifold canonical bundle.
\end{prop}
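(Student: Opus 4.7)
The plan is to bootstrap \cref{intersection-non-pos-fort} by showing that the section constructed in \cref{construction-section-sur-F} is not merely holomorphic but in fact vanishes somewhere on $\varphi(\F)$, thereby upgrading the non-strict degree inequality to a strict one.

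First, I would select a general point $x \in X$ satisfying the two open conditions simultaneously: on the one hand, $x$ should lie in $X \setminus (X_0 \cup X_1)$ so that \cref{construction-section-sur-F} applies and produces a section
\[
\sigma \in H^0\!\left(\varphi(\F),\, ev_x^\ast\!\left(\bigwedge^{\dim Y} TY-\Delta\right)^{\otimes M}\right),
\]
for the integer $M = \lcm\{m_i\}$. On the other hand, by \cref{degeneracy-locus}, the projection $\pi_X(\mathcal{D}_\text{orb})=X$, so there exists $g \in \varphi(\F)$ with $(x,g) \in \mathcal{D}_\text{orb}$, i.e.\ $\sigma_g(x)=0$. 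Both conditions cut out dense open subsets of $X$, so their intersection remains non-empty; we pick $x$ in this intersection.

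Next, by construction of $\sigma$ in \cref{construction-section-sur-F}, its value at a point $g \in \varphi(\F)$ is (up to the local expression of the line bundle) given precisely by $\sigma_g(x)$. Hence the vanishing $\sigma_g(x)=0$ ensures that $\sigma$ is a non-zero global section with at least one zero on $\varphi(\F)$. Because $\varphi(\F)$ is a smooth projective curve, this forces the strict inequality
\[
\deg_{\varphi(\F)} ev_x^\ast\!\left(\bigwedge^{\dim Y} TY-\Delta\right)^{\otimes M} > 0.
\]
Translating via $\bigwedge^{\dim Y} TY^\ast = K_Y$ in additive notation and then using the projection formula exactly as in the proof of \cref{intersection-non-pos-fort}, this rewrites as
\[
\deg_{ev_x(\varphi(\F))}\!\bigl(K_Y+\Delta\bigr)^{M} < 0,
\]
which is the desired strict negativity.

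The main (already completed) obstacle is of course \cref{degeneracy-locus}, whose proof extracts a non-trivial logarithmic vector field from the hypothetical failure of $\pi_X(\mathcal{D}_\text{orb}) = X$; granted that lemma, the remaining delicate point here is purely bookkeeping — verifying that the ``general point'' condition in \cref{construction-section-sur-F} and the ``point in the image $\pi_X(\mathcal{D}_\text{orb})$'' condition can be imposed simultaneously, which holds because $X_0 \cup X_1$ is a proper closed subset by \cref{X0-X1-finis} and $\pi_X(\mathcal{D}_\text{orb})=X$ by \cref{degeneracy-locus}.
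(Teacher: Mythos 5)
Your proposal is correct and follows essentially the same route as the paper: the paper also deduces \cref{inters-neg-fort} by noting that \cref{degeneracy-locus} forces the section from \cref{construction-section-sur-F} to vanish somewhere on the curve, which upgrades the non-strict degree inequality of \cref{intersection-non-pos-fort} to a strict one. The only cosmetic difference is that you phrase $\pi_X(\mathcal{D}_{\text{orb}})=X$ as an additional open condition on $x$ to intersect with $X\setminus(X_0\cup X_1)$, when in fact it holds automatically for every $x$, so no extra genericity needs to be imposed.
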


\section{Finiteness results}

\subsection{Finiteness result for hyperbolic uniformizable orbifold pairs}

In this section, we will prove a new rigidity result based on an algebraic theorem proved by Miyaoka–Mori in the case of uniformizable orbifold pairs. We recall that the definition of an uniformizable orbifold pair is given by \cref{defi-unif}.

\begin{prop}\label{zero-dim-unif}
Let $X$ be a projective variety and $(Y,\Delta)$ be a smooth projective orbifold pair. If $(Y,\Delta)$ is hyperbolic and uniformizable, then $\Sur(X,(Y,\Delta))$, the set of surjective orbifold maps from $X$ on $(Y,\Delta)$, is zero-dimensional.
\end{prop}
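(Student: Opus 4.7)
The strategy parallels the big canonical bundle case of Proposition~\ref{zero-dim-big}, but here the positivity comes from uniformizability combined with a Miyaoka--Mori type bend-and-break argument on the étale cover.

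I would argue by contradiction. Suppose $\Sur(X,(Y,\Delta))$ has positive dimension; after intersecting with sufficiently many hyperplanes, one extracts a smooth projective $1$-dimensional family $\mathcal{F}$ together with a non-constant holomorphic map $\varphi : \mathcal{F} \to \Sur(X,(Y,\Delta))$. Using the reduction of Section~5, I may assume $\dim X = \dim Y$ and every map in the family is finite, so the evaluation morphism framework of Theorem~\ref{construction-section-sur-F} applies. By Proposition~\ref{inters-neg-fort}, for a sufficiently general point $x \in X$, the curve $C := ev_x(\varphi(\mathcal{F})) \subset Y$ satisfies
\[C \cdot (K_Y + \Delta) < 0.\]

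Next, I would exploit the uniformizability hypothesis. Let $\pi : Z \to (Y,\Delta)$ be an étale $\Delta$-adapted cover, with $Z$ a smooth projective variety. The key numerical point is the ramification formula: since $\pi$ ramifies exactly with multiplicity $m_i$ over $\Delta_i$, we have $\pi^{\ast}\Delta_i = m_i \sum_j \widetilde{\Delta}_{ij}$ and $\mathrm{Ram}(\pi) = \sum_{i,j}(m_i-1)\widetilde{\Delta}_{ij}$, giving the clean identity $\pi^{\ast}(K_Y+\Delta) = K_Z$. Choose any irreducible component $\widetilde{C}$ of $\pi^{-1}(C)$; the projection formula yields
\[\widetilde{C}\cdot K_Z \;=\; \widetilde{C}\cdot \pi^{\ast}(K_Y+\Delta) \;=\; (\deg \pi|_{\widetilde{C}})\cdot C\cdot (K_Y+\Delta) \;<\; 0.\]

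With an irreducible curve on the smooth projective variety $Z$ intersecting $K_Z$ strictly negatively, the Miyaoka--Mori theorem \cite{miyaoka_numerical_1986} produces a rational curve $\mathbb{P}^1 \to Z$. Composing this with the orbifold morphism $\pi : Z \to (Y,\Delta)$ (which is an orbifold morphism precisely because $\pi$ is $\Delta$-étale) gives a non-constant orbifold morphism $\mathbb{P}^1 \to (Y,\Delta)$, and hence in particular a non-constant orbifold entire curve $\mathbb{C} \to (Y,\Delta)$. This contradicts the hyperbolicity of $(Y,\Delta)$ (which in the compact setting forces Brody-hyperbolicity), completing the argument.

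The delicate step is the first: obtaining the \emph{strictly} negative intersection from Proposition~\ref{inters-neg-fort}, which itself relied on the degeneracy locus argument of Lemma~\ref{degeneracy-locus} and the absence of logarithmic vector fields. Everything else is a clean transfer to the cover: the numerical identity $\pi^{\ast}(K_Y+\Delta)=K_Z$ is automatic from the definition of a uniformizable orbifold, and the pullback of an orbifold morphism along an orbi-étale cover preserves the orbifold condition, so the composition $\mathbb{P}^1 \to Z \to (Y,\Delta)$ is genuinely an orbifold map and the contradiction with hyperbolicity is immediate.
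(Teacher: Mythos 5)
Your proposal follows essentially the same route as the paper's own proof: reduce to a one-dimensional family, invoke Proposition~\ref{inters-neg-fort} to get a curve $C$ with $C\cdot(K_Y+\Delta)<0$, pass to the orbi-\'etale cover where $\pi^*(K_Y+\Delta)=K_Z$, apply Miyaoka--Mori on $Z$, and push the rational curve back down to contradict hyperbolicity. The only detail the paper makes explicit that you leave implicit is that the Miyaoka--Mori rational curve can be chosen through a point $z$ with $\pi(z)\notin\lceil\Delta\rceil$ (possible since $C\not\subset\lceil\Delta\rceil$), ensuring the composite $\pi\circ g:\mathbb{P}^1\to(Y,\Delta)$ has image not contained in $\lceil\Delta\rceil$ and so is genuinely an orbifold morphism.
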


The result is based on the following theorem of Miyaoka–Mori \cite{miyaoka_numerical_1986}.
\begin{thm}\label{KMM}
    Let $X$ be a non-singular projective algebraic manifold, $C$ a closed curve on $X$ and $x$ a general point of $C$. If $K_X \cdot C <0$, then there exists a rational curve $L$ through $x$.
\end{thm}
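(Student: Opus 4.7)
The plan is to reproduce Mori's celebrated ``bend-and-break'' argument, since producing a rational curve out of an intersection-number hypothesis has no direct complex-analytic path: one needs to reduce to positive characteristic, use Frobenius to create deformations, and then degenerate. Let $\nu\colon \tilde C\to C$ be the normalization, set $f=\iota\circ\nu\colon\tilde C\to X$, write $n=\dim X$ and $g=g(\tilde C)$. By standard deformation theory of morphisms to a smooth target, the local dimension of $\Hom(\tilde C,X)$ at $[f]$ is at least $\chi(\tilde C,f^*TX)=-K_X\cdot f_*[\tilde C]+n(1-g)$; fixing the image of one point $t\in f^{-1}(x)$ costs $n$ extra conditions, yielding
\[
\dim_{[f]}\Hom\bigl(\tilde C,X;\,t\mapsto x\bigr)\ \geq\ -K_X\cdot C-ng.
\]
In characteristic zero this lower bound can be negative, so no direct deformation argument applies.

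The idea is to force this estimate to become positive in positive characteristic. I would spread the data $(X,C,x)$ over a finitely generated $\mathbb Z$-algebra $R$ and pass to fibers over closed points of $\Spec R$ with residue field $\mathbb F_q$ of positive characteristic $p$; the numerical hypothesis $K_X\cdot C<0$ is preserved on such fibers. There, precomposing with the $m$-th iterate of the $\mathbb F_p$-linear Frobenius $F^m\colon \tilde C^{(m)}\to\tilde C$ yields a morphism $f_m\colon \tilde C^{(m)}\to X$ of the same arithmetic genus $g$ but with $(f_m)_*[\tilde C^{(m)}]=p^m\cdot f_*[\tilde C]$. The dimension estimate becomes
\[
\dim_{[f_m]}\Hom\bigl(\tilde C^{(m)},X;\,t\mapsto x\bigr)\ \geq\ -p^m\,K_X\cdot C-ng,
\]
which is strictly positive for $m\gg 0$. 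Hence one obtains a genuinely positive-dimensional family of morphisms $\tilde C^{(m)}\to X$ sending a fixed source point to $x$.

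Next I would apply the bend-and-break lemma: take a smooth complete curve $B$ in $\Hom(\tilde C^{(m)},X;\,t\mapsto x)$ and consider the associated morphism $B\times\tilde C^{(m)}\to X$ which is constant along $B\times\{t\}$ but not globally constant. Such a family cannot extend over a proper base without blowing up, and a careful analysis of the indeterminacy locus after blowing down $B\times\{t\}$ on $\mathbb P^1\times\tilde C^{(m)}$ forces the existence of a rational component in the limit, passing through $x$. This produces, in characteristic $p$, a rational curve $L_p\subset X_p$ through $x_p$ whose anticanonical degree is bounded in terms of $-K_X\cdot C$, $n$, and $g$ only (independent of $p$ and $m$).

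Finally, to descend back to characteristic zero, I would note that the Kontsevich/Hilbert scheme parametrizing rational curves through $x$ of bounded anticanonical degree is of finite type over $\Spec R$, and is non-empty over a dense set of closed points of positive characteristic. By standard specialization/constructibility arguments for relative Hilbert schemes, non-emptiness of the fibers over a dense set of closed points forces non-emptiness of the geometric generic fiber, which gives the desired rational curve through $x$ in the original complex setting. The main obstacles are the bend-and-break degeneration step, which requires delicate control of the limit morphism and the resulting rational component passing through the prescribed point, and the descent step, which requires uniform bounds on the degree of the produced rational curves so that the parameter space is of finite type over the arithmetic base.
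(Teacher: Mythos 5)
This statement is not proved in the paper at all: it is quoted as a black box from Miyaoka--Mori \cite{miyaoka_numerical_1986} (the paper's ``proof'' is the citation), so there is no internal argument to compare yours against. What you have written is, in outline, the standard and essentially original proof of this result: the deformation-theoretic bound $\dim_{[f]}\Hom(\tilde C,X;t\mapsto x)\geq -K_X\cdot C+n(1-g)-n$, spreading out over a finitely generated $\Z$-algebra, composing with Frobenius iterates to make the bound positive in characteristic $p$, bend-and-break with the point $x$ fixed to produce a rational curve through $x$ in each positive-characteristic fibre, and a constructibility/finite-type argument over the arithmetic base to descend to characteristic zero. That is the right route, and your numerical estimates are correct.

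Two places in your sketch carry the real weight and are stated rather than argued. First, the claim that the rational curve $L_p$ produced by bend-and-break has anticanonical degree bounded independently of $p$ and $m$ is not automatic from the first bend-and-break step (the curve you break has degree growing like $p^m$); one needs the second bend-and-break lemma (degenerating a rational curve with \emph{two} points fixed) iterated to cut the degree down to a uniform bound such as $-K_X\cdot L\leq \dim X+1$. Without this uniform bound the parameter space of rational curves through $x$ is not of finite type over $\mathrm{Spec}\,R$ and the descent collapses. Second, the degeneration analysis itself (``a careful analysis of the indeterminacy locus after blowing down'') is the heart of bend-and-break: one must pass to a compactification of the parameter curve $B$, resolve the indeterminacy of the induced rational map from the ruled surface over $B$, and use that the section corresponding to the marked point is contracted to $x$ to force a rational component of a degenerate fibre through $x$; as written this is a gesture, not a proof. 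Both points are classical (Mori; see also Koll\'ar or Debarre for complete treatments), so the sketch is acceptable as a reconstruction of the cited theorem, but these are exactly the steps a complete write-up would have to supply.
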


\begin{proof}[Proof of \cref{zero-dim-unif}]
We argue by contradiction. Assume that there exists an irreducible component $\F$ of $\Sur(X,(Y,\Delta))$, such that $\dim \F\geq1$.
Since we consider algebraic varieties, we can intersect $\F$ with sufficiently many hyperplanes, in such a way that $\dim \F= 1$.
By \cref{inters-neg-fort}, there exists a point $x\in X$ and an integer $M\in \N$, such that the curve $C=ev_x(\F)$ intersects negatively $(K_Y+\Delta)^{\otimes M}$. Note that $C\not\subset \Delta$ because we consider a subset $\F$ of surjective maps.

Since the orbifold is uniformizable, there exists an étale $\Delta$-adapted covering $Z$.
\[\xymatrix{
    & Z \ar[d]^\pi \\
    \F \ar[r]_{ev_x} & (Y,\Delta)
  }\]
   The covering being étale, we have $\pi^\ast\left(K_Y+\Delta\right)=K_Z.$ Then, one can lift the curve $C=ev_x(\F)$ in $Z$.  So, \[\pi^{-1}(C)\cdot K_Z <0.\]
  
  By \cref{KMM}, there exists a map $g:\PP^1\to Z$ such that the curve $g(\PP^1)$ passes through any general point of $\pi^{-1}(C)$. Fix $z\in \pi^{-1}(C)\subset Z$ such a general point. We can chose $z$ such that $\pi(z)\notin \Delta$ since $C\not\subset \Delta$.  If we compose by $\pi$, which is étale, we obtain an orbifold morphism 
  \begin{equation}\label{absurdite-hyp-unif}
      \pi\circ g: \PP^1 \to (Y,\Delta).
  \end{equation}
   Note that $\pi\circ g(\PP^1)\not\subset \Delta$ because the curve $\pi\circ g(\PP^1)$ passes through $\pi(z)\notin \Delta$. 
Then, $\pi\circ g(\PP^1)$ is a rational orbifold curve on $(Y,\Delta)$ which is a contradiction with the fact that $(Y,\Delta)$ is hyperbolic. This conclude the proof of \cref{zero-dim-big}.
\end{proof}

\begin{thm}\label{finite-unif}
       Let $X$ be a projective variety and $(Y,\Delta)$ be a projective orbifold pair. If $(Y,\Delta)$ is hyperbolic and uniformizable, then $\Sur(X,(Y,\Delta))$ is finite.
\end{thm}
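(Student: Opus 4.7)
The plan is to mirror exactly the argument used for \cref{finite-big}: combine the compactness result \cref{Sur-compact} with the rigidity statement \cref{zero-dim-unif} to deduce that $\Sur(X,(Y,\Delta))$ is a compact zero-dimensional analytic set, hence finite.

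First, I would observe that since $(Y,\Delta)$ is projective in the sense of this paper, $\lfloor \Delta \rfloor = \emptyset$, so $(Y,\Delta)$ is in particular a compact hyperbolic orbifold pair, and $X$ trivially satisfies $\lfloor \Delta_X \rfloor = \emptyset$ since we view $X$ as an orbifold with empty divisor. Consequently, \cref{Sur-compact} applies and yields that $\Sur(X,(Y,\Delta))$ is compact.

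Next, I would note that it suffices to show that each irreducible component of $\Sur(X,(Y,\Delta))$ is a single point. By \cref{zero-dim-unif} (whose hypotheses—hyperbolicity and uniformizability—are exactly those of the theorem), every irreducible component of $\Sur(X,(Y,\Delta))$ is zero-dimensional. An irreducible zero-dimensional analytic space is a single point, so each irreducible component is a one-point set.

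Finally, since $\Sur(X,(Y,\Delta))$ is compact and its irreducible decomposition consists of one-point sets, there can be only finitely many such components (a compact analytic space has only finitely many irreducible components). Therefore $\Sur(X,(Y,\Delta))$ is a finite set, which concludes the proof. The argument is essentially formal given the two preceding results, so there is no real obstacle—the substantive work has already been done in establishing \cref{Sur-compact} (compactness via Kobayashi hyperbolicity and Arzelà–Ascoli) and \cref{zero-dim-unif} (rigidity via the Miyaoka–Mori bend-and-break combined with the construction from \cref{section-inters-neg} of a curve intersecting $K_Y+\Delta$ negatively).
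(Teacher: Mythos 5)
Your proof follows exactly the same argument as the paper: invoke \cref{Sur-compact} for compactness of $\Sur(X,(Y,\Delta))$, invoke \cref{zero-dim-unif} to see each irreducible component is zero-dimensional (hence a point), and conclude finiteness. The only additions in your write-up are small clarifying remarks (checking the hypotheses of \cref{Sur-compact}, noting a compact analytic set has finitely many components), which do not change the substance.
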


\begin{proof}
By \cref{Sur-compact}, it suffices to show that each irreducible component of $\Sur(X,(Y,\Delta))$ is a one-point set.
Then, by \cref{zero-dim-unif},
one can see that each irreducible component of $\Sur(X,(Y,\Delta))$ is zero-dimensional. Then $\Sur(X,(Y,\Delta))$ contains only a finite number of one-point sets, and so $\Sur(X,(Y,\Delta))$ is finite.
\end{proof}

\begin{rmq}
    Obviously, this result implies \cref{finite-unif+semipos}.
\end{rmq}

\begin{ex}[Compact quotient of the unit ball] All Riemann surfaces are characterized by the Poincaré uniformization Theorem. In particular, hyperbolic compact Riemann surfaces are quotients of the unit disc. By De Franchis Theorem \cite{de_franchis_teorema_1913}, there is only a finite number of surjective maps with values in a quotient of the unit disc.
In higher dimension, if we consider compact quotients of the unit ball, they describe some uniformizable hyperbolic orbifolds. By \cref{finite-unif}, there is only a finite number of surjective orbifold maps with values in a quotient of the unit ball.
\end{ex}

\subsection{Finiteness result for hyperbolic orbifold pairs with nef orbifold canonical bundle}

In this subsection, we will prove a new rigidity result based on analytic condition. More precisely, we consider a positivity condition on the orbifold canonical bundle, namely the fact that it is nef.

\begin{prop}\label{zero-dim-nef}
    Let $X$ be a projective variety and $(Y,\Delta)$ be a compact orbifold pair. If $(Y,\Delta)$ is hyperbolic and if the orbifold canonical bundle $K_Y+\Delta$ is nef, then $\Sur(X,(Y,\Delta))$, the set of surjective orbifold maps from $X$ on $(Y,\Delta)$, is zero-dimensional.
\end{prop}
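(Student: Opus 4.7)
The plan is to mirror the pattern established in \cref{zero-dim-big} and \cref{zero-dim-unif}: reduce by contradiction to a one-parameter family of surjective orbifold morphisms, and then conclude directly from the strict intersection-negativity statement of \cref{inters-neg-fort}.

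First I would argue by contradiction. Assuming $\Sur(X,(Y,\Delta))$ has some irreducible component $\F$ of positive dimension, we are working in the algebraic category, so cutting $\F$ by sufficiently many hyperplane sections reduces to the case $\dim \F = 1$. Just as in the proofs of \cref{zero-dim-big} and \cref{zero-dim-unif}, the point $x \in X$ used to define the evaluation $ev_x : \F \to Y$ is chosen general, lying in $X \setminus (X_0 \cup X_1)$, so that the image $C := ev_x(\F) \subset Y$ is an honest irreducible curve. Moreover $C \not\subset \lceil \Delta \rceil$: since each member of $\F$ is surjective and $x$ is generic, the image $ev_x(\F)$ cannot be contained entirely in the boundary.

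The decisive input is then \cref{inters-neg-fort}, which (using the hyperbolicity of $(Y,\Delta)$) produces an integer $M \in \N$ such that
\[
M \cdot \bigl(C \cdot (K_Y + \Delta)\bigr) \;=\; \deg_C\bigl((K_Y + \Delta)^{\otimes M}\bigr) \;<\; 0,
\]
so $C \cdot (K_Y + \Delta) < 0$. But $K_Y + \Delta$ is nef by hypothesis and $C$ is an irreducible curve of $Y$ not contained in $\lceil \Delta \rceil$, so the intersection number $C \cdot (K_Y + \Delta)$ must be non-negative. This contradiction closes the proof. I expect no genuine obstacle: all the analytic and intersection-theoretic work has been absorbed into \cref{inters-neg-fort}, itself resting on \cref{degeneracy-locus} and \cref{construction-section-sur-F}. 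Compared with the big case \cref{zero-dim-big}, where one invokes a Zariski-type decomposition $m(K_Y+\Delta) \equiv_{\mathrm{num}} A + E$ in order to upgrade non-negativity to strict positivity, and with the uniformizable case \cref{zero-dim-unif}, which requires the Miyaoka–Mori rational curve theorem, the nef hypothesis produces the contradiction immediately.
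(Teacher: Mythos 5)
Your proof follows exactly the paper's argument: reduce by contradiction to a one-dimensional family $\F$, apply \cref{inters-neg-fort} to get a curve $ev_x(\F)$ intersecting $K_Y+\Delta$ strictly negatively, and contradict nefness immediately. The only cosmetic difference is that you spell out the choice of $x \in X \setminus (X_0 \cup X_1)$ explicitly, which the paper leaves implicit by reference to the earlier proofs.
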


\begin{proof}
The idea of the proof is the same as the proof of \cref{zero-dim-big}, just using \cref{inters-neg-fort} instead of \cref{intersection-non-pos-fort}.

We argue by contradiction. Assume that $\dim \Sur(X,(Y,\Delta))>0$, \ie there exists an irreducible component $\F$ of $\Sur(X,(Y,\Delta))$, such that $\dim \F\geq1$. Since we consider algebraic varieties, we can intersect $\F$ with sufficiently many hyperplanes, in such a way that $\dim \F= 1$.
By \cref{inters-neg-fort}, there exist a point $x\in X$ and an integer $M\in \N$, such that the curve $ev_x(\F)$ intersects negatively $(K_Y+\Delta)^{\otimes M}$.
Then, we obtain a direct contradiction with the fact that $K_Y+\Delta$ is nef and thus we conclude the proof of \cref{zero-dim-nef}.
\end{proof}

\begin{thm}\label{finite-nef}
       Let $X$ be a projective variety and $(Y,\Delta)$ be a compact orbifold pair. If $(Y,\Delta)$ is hyperbolic and  if the orbifold canonical bundle $K_Y+\Delta$ is nef, then $\Sur(X,(Y,\Delta))$ is finite.
\end{thm}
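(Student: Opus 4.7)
The plan is to follow the exact same two-step template already used successfully for \cref{finite-unif+semipos}, \cref{finite-big}, and \cref{finite-unif}. All the real work has been done in the preceding sections; this theorem just combines the compactness and rigidity results.

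First, I would invoke \cref{Sur-compact} to deduce that $\Sur(X,(Y,\Delta))$ is a compact topological space inside $\Hol(X,Y)$ (note the hypothesis $\lfloor \Delta_X \rfloor = \emptyset$ is trivially satisfied since $X$ is a variety, \ie $\Delta_X = 0$). Hence it is a finite union of irreducible components, each of which is itself compact.

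Next, I would apply \cref{zero-dim-nef} to conclude that each irreducible component of $\Sur(X,(Y,\Delta))$ is zero-dimensional. Indeed, the nefness of $K_Y+\Delta$ together with hyperbolicity of $(Y,\Delta)$ are exactly the hypotheses of \cref{zero-dim-nef}, and the argument there (going through \cref{inters-neg-fort}) forbids any positive-dimensional subfamily. A compact, zero-dimensional complex space is a finite set of points, so each irreducible component is a single point, and $\Sur(X,(Y,\Delta))$ is therefore finite.

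There is no substantive obstacle here, since the two ingredients are already established. The only thing to notice is that the proof is literally identical, line for line, to that of \cref{finite-big} and \cref{finite-unif}, with \cref{zero-dim-nef} in place of \cref{zero-dim-big} or \cref{zero-dim-unif}. For this reason, the proof in the paper will almost certainly just be the one-sentence remark that it follows from \cref{Sur-compact} and \cref{zero-dim-nef} by the same argument as before.
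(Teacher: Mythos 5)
Your proof is correct and matches the paper's approach exactly: the paper simply states that the argument is the same as for \cref{finite-unif}, replacing \cref{zero-dim-unif} by \cref{zero-dim-nef}, which is precisely the combination of \cref{Sur-compact} and \cref{zero-dim-nef} you describe. Your anticipation that the paper's proof would be a one-sentence reference to these two results is accurate.
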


The proof of this result is the same as the proof of \cref{finite-unif}, except that \cref{zero-dim-unif} is replaced by \cref{zero-dim-nef}.

\begin{rmq}
    Obviously, this result implies \cref{finite-big}, but here we need the negativity result of the intersection.
\end{rmq}

\subsection{Finiteness result for hyperbolic orbifold pairs with pseudo-effective orbifold canonical bundle}

In this last subsection, we prove a more general rigidity result. We consider another kind of positivity condition on the orbifold canonical bundle, namely the fact that it is pseudo-effective.

Based on \cite{lazarsfeld_positivity_2004-1}, we recall the definition of pseudo-effective divisors.

\begin{defi}
Let $X$ be a variety.
    The \emph{pseudo-effective cone}
$\mathrm{Pseff}(X)\subset N^1_\R(X)$ 
is the closure of the convex cone spanned by the classes of all effective $\R$-divisors.
A divisor $D \in \Div_\R(X)$ is \emph{pseudo-effective} if its class lies in the
pseudo-effective cone.
\end{defi}

\begin{rmq}
    We denote by $N^1(X)$ the \emph{Néron–Severi group} of X defined by 
\[N^1(X) = \bigslant{\Div(X)}{\Num(X)}\]
of numerical equivalence classes of divisors on $X$.
\end{rmq}

Let us recall the following theorem (see \cite[Theorem~0.2]{boucksom_pseudo-effective_2012}), giving an equivalent definition for pseudo-effective line bundle.

\begin{thm}\label{pseff-boucksom}
    A line bundle $L$ on a projective manifold $X$ is pseudo-effective if, and only if, $L\cdot C \geq 0$ for all irreducible curves $C$ which move in a family covering $X$.
\end{thm}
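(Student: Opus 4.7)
The plan is to prove both implications, although the nontrivial one is the main result of Boucksom--Demailly--Paun--Peternell and will ultimately be invoked as a black box.

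For $(\Rightarrow)$, write $[L]=\lim_n [D_n]$ with each $D_n$ an effective $\R$-divisor, and let $C$ be an irreducible curve whose deformations $\{C_t\}_{t\in T}$ cover $X$. For each fixed $n$, the components of $D_n$ form a proper closed subset of $X$, so one can pick $t\in T$ such that no component of $D_n$ contains $C_t$; the intersection is then proper and gives $D_n\cdot C = D_n\cdot C_t \geq 0$. Passing to the limit yields $L\cdot C\geq 0$.

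For $(\Leftarrow)$, we argue the contrapositive. Let $\mathrm{Mov}(X)\subset N_1(X)_\R$ denote the closed convex cone generated by classes of irreducible curves moving in covering families, and assume $[L]\notin \mathrm{Pseff}(X)$. Since $\mathrm{Pseff}(X)$ is a closed convex cone in the finite-dimensional space $N^1(X)_\R$, Hahn--Banach produces a linear form $\alpha\in N_1(X)_\R$ that is nonnegative on $\mathrm{Pseff}(X)$ and strictly negative on $[L]$. The essential step is to show that every such $\alpha$ is represented by a class in $\mathrm{Mov}(X)$; equivalently, that $\mathrm{Pseff}(X)$ and $\mathrm{Mov}(X)$ are dual under the intersection pairing. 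Once this duality is granted, $\alpha$ arises from a movable curve $C$ with $L\cdot C<0$, producing the required contradiction.

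The main obstacle is exactly this duality. BDPP establish it via the volume function on the big cone: Fujita approximation combined with Demailly's regularization of quasi-plurisubharmonic metrics identifies $\nabla\,\mathrm{vol}$ with a movable intersection class, and the orthogonality relation $\langle P^{n-1}\rangle\cdot N = 0$ attached to the divisorial Zariski decomposition $D=P+N$ of a big divisor shows that no movable class can have negative intersection with any boundary class of $\mathrm{Pseff}(X)$. A limiting argument then extends the equality from the interior of the big cone to the whole pseudo-effective cone. This transcendental--algebraic input is the step one cannot bypass by elementary means, which is why we simply cite the result and use it in what follows.
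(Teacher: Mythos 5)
The paper does not prove this statement: it is recalled verbatim from Boucksom--Demailly--P\u{a}un--Peternell with the citation \cite[Theorem~0.2]{boucksom_pseudo-effective_2012}, and no argument is given. Your proposal therefore cannot diverge from ``the paper's proof'' because there is none. What you have done is write out the easy direction and correctly locate the hard direction as the BDPP duality $\overline{\mathrm{Eff}}(X)=\mathrm{Mov}(X)^{\vee}$, which you then cite --- effectively what the paper does, plus the routine half.

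Your $(\Rightarrow)$ argument is fine: since intersection numbers are numerical invariants, $D_n\cdot C = D_n\cdot C_t$ for any deformation $C_t$, and choosing $C_t$ not contained in the support of $D_n$ gives $D_n\cdot C\geq 0$; passing to the limit is legitimate. In $(\Leftarrow)$ the Hahn--Banach separation plus the BDPP duality is exactly right; the only small elision is that the separating functional $\alpha\in\mathrm{Mov}(X)$ need only be a \emph{limit} of classes of movable curves, but since $L\cdot\alpha<0$ is an open condition one can approximate $\alpha$ by an honest movable curve $C$ with $L\cdot C<0$, which closes the gap. The thumbnail of the BDPP proof (Fujita approximation, Demailly regularization, orthogonality in the divisorial Zariski decomposition) is accurate as a description of what is being imported. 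In short: correct, and consistent with the paper's use of this result as a black box.
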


This characterization allows us to prove the following result.

\begin{prop}\label{zero-dim-pseff}
    Let $X$ be a projective variety and $(Y,\Delta)$ be a projective orbifold pair. If $(Y,\Delta)$ is hyperbolic and $K_Y+\Delta$ is pseudo-effective, then $\Sur(X,(Y,\Delta))$, the set of surjective orbifold maps from $X$ on $(Y,\Delta)$, is zero-dimensional.
\end{prop}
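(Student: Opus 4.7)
The plan is to mimic the contradiction argument used in the nef case (\cref{zero-dim-nef}) but replace the easy ``nef $\implies$ non-negative intersection with every curve'' implication by the deeper Boucksom characterization (\cref{pseff-boucksom}), which requires producing not just one curve with negative intersection but a whole covering family of such curves.

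Assume toward a contradiction that $\dim \Sur(X,(Y,\Delta)) \geq 1$, pick an irreducible component of positive dimension, and cut it down by generic hyperplane sections (as in the proofs of \cref{zero-dim-big,zero-dim-nef}) so as to obtain a smooth, irreducible, projective $1$-dimensional subfamily $\F \subset \Sur(X,(Y,\Delta))$. Recall that by the reduction of Section~5 we may assume $\dim X = \dim Y$ and that every $f \in \F$ is finite and surjective onto $Y$.

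Next, apply \cref{inters-neg-fort}: there exists a proper Zariski closed subset $X_0 \cup X_1 \subset X$ (cf.\ \cref{X0-X1-finis}) and an integer $M = \lcm\{m_i\} \in \N$ such that, writing $C_x := ev_x(\F) \subset Y$, for every $x \in U := X \setminus (X_0 \cup X_1)$ one has
\[
 (K_Y + \Delta)^{\otimes M} \cdot C_x < 0.
\]
Each $C_x$ is irreducible as the image of the irreducible curve $\F$ under the holomorphic map $ev_x$, and by construction of $X_0$ it is genuinely $1$-dimensional.

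The key remaining point is to promote the collection $\{C_x\}_{x \in U}$ to a \emph{covering family} in the sense of \cref{pseff-boucksom}. Fix any $f_0 \in \F$: since $f_0 \colon X \to Y$ is surjective, the restriction $f_0|_U \colon U \to Y$ is still dominant, so $f_0(U)$ contains a dense open subset of $Y$. For every $y \in f_0(U)$, choose $x \in U$ with $f_0(x) = y$; then $y = ev_x(f_0) \in C_x$. Thus the incidence variety
\[
 \mathcal{I} = \overline{\{(y,x) \in Y \times U \mid y \in C_x\}} \subset Y \times X
\]
dominates $Y$ via the first projection and has irreducible $1$-dimensional fibres $C_x$ over the second, so $\{C_x\}_{x \in U}$ is a covering family of irreducible curves in $Y$. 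Applying \cref{pseff-boucksom} to the pseudo-effective line bundle $(K_Y + \Delta)^{\otimes M}$ forces $(K_Y+\Delta)^{\otimes M} \cdot C_x \geq 0$ for a general member of this covering family, contradicting the strict negativity obtained above.

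The main obstacle is the covering-family step: one has to check carefully that the algebraic family $\{C_x\}$ genuinely covers $Y$ (or a dense open of it) by \emph{irreducible} curves, so that \cref{pseff-boucksom} can be applied. The key inputs are the surjectivity of each $f \in \F$ (which propagates to a dominant restriction to the open set where \cref{inters-neg-fort} applies) and the fact that the exceptional locus $X_0 \cup X_1$ is a \emph{proper} closed subset. Everything else is a transcription of the nef argument with the Boucksom characterization plugged in at the final step.
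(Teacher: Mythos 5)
Your proof follows exactly the strategy of the paper's own argument: cut $\F$ down to a one-dimensional family, invoke \cref{inters-neg-fort} to get curves $C_x = ev_x(\F)$ intersecting $(K_Y+\Delta)^{\otimes M}$ negatively, observe that surjectivity of the maps in $\F$ makes these curves a covering family of $Y$, and conclude via \cref{pseff-boucksom}. You are somewhat more explicit than the paper in verifying the covering-family hypothesis (picking a fixed $f_0$, checking irreducibility and one-dimensionality of $C_x$, and setting up the incidence variety), but the argument is the same.
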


\begin{rmq}
    This result implies \cref{zero-dim-big} and \cref{zero-dim-nef}.
\end{rmq}

\begin{proof}[Proof of \cref{zero-dim-pseff}]
We argue by contradiction.
Assume that there exists an irreducible component $\F$ of $\Sur(X,(Y,\Delta))$, such that $\dim \F\geq 1$.
Since we consider algebraic varieties, we can intersect $\F$ with sufficiently many hyperplanes, in such a way that $\dim \F= 1$.
By \cref{inters-neg-fort}, depending on the choice of the point $x\in X$, there exists a family of curves $C_x$, given by $C_x= ev_x(\F)$, which intersects negatively $(K_Y+\Delta)^{\otimes M}$, for an integer $M\in \N$.
Since $\F$ is a subset of surjective maps, one can move the point $x$, such that we obtain a family of curves covering $Y$. Indeed, for every point $y\in Y$, since $\mathcal{F}\subset \Sur(X,(Y,\Delta))$, there exist a point $x\in X$ and a map $f\in \F$ such that $y=f(x)$.

Then, by \cref{pseff-boucksom}, we obtain a direct contradiction with the fact that $K_Y+\Delta$ is pseudo-effective and thus we conclude the proof of \cref{zero-dim-pseff}.
\end{proof}

\begin{thm}\label{finite-pseff}
       Let $X$ be a projective variety and $(Y,\Delta)$ be a projective orbifold pair. If $(Y,\Delta)$ is hyperbolic and $K_Y+\Delta$ is pseudo-effective, then $\Sur(X,(Y,\Delta))$ is finite.
\end{thm}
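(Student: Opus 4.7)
The plan is to mirror the proofs of \cref{finite-unif+semipos}, \cref{finite-big}, \cref{finite-unif}, and \cref{finite-nef} almost verbatim, simply swapping in the new zero-dimensionality statement \cref{zero-dim-pseff} in place of the earlier variants. The strategy has two entirely independent ingredients: a topological compactness ingredient and an algebro-geometric rigidity ingredient. The theorem follows immediately once both are in place.

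First I would invoke \cref{Sur-compact} (using the fact that $(Y,\Delta)$ is hyperbolic and, being projective, is in particular compact with $\lfloor \Delta \rfloor = \emptyset$, so the hypotheses of the corollary are met once we, as explained in the reduction section, regard $X$ as an adapted-cover-type source with $\lfloor \Delta_X \rfloor = \emptyset$) to conclude that $\Sur(X,(Y,\Delta))$ is compact inside $\Hol(X,Y)$. In particular, as a compact complex-analytic space, it has only finitely many irreducible components, and it suffices to show that each such component is a single point.

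Next I would appeal to \cref{zero-dim-pseff}: under the hypotheses that $(Y,\Delta)$ is hyperbolic and $K_Y + \Delta$ is pseudo-effective, the set $\Sur(X,(Y,\Delta))$ is zero-dimensional. Therefore every irreducible component is both compact (inherited from the ambient compact set) and zero-dimensional, hence a one-point set. Combining this with the finiteness of the number of components completes the proof.

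I do not anticipate any real obstacle here; all the work has already been done in \cref{Sur-compact} and \cref{zero-dim-pseff}. The only thing worth double-checking is that the hypotheses of \cref{Sur-compact} really are available in the present setting: $(Y,\Delta)$ being projective and $K_Y+\Delta$ pseudo-effective forces $\lfloor \Delta \rfloor = \emptyset$ by the definition of \emph{projective orbifold}, so $(Y,\Delta)$ is a compact orbifold pair in the sense required by that corollary, and the reduction carried out in the section on reduction of the study ensures we may take the source to be a variety (equivalently, an orbifold with empty floor). Thus the proof will consist of a two-sentence combination of the two cited results, exactly as in the parallel theorems.
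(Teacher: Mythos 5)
Your argument is exactly the paper's: the paper defers to the proof of \cref{finite-unif}, which combines \cref{Sur-compact} (compactness, hence finitely many irreducible components) with \cref{zero-dim-pseff} in place of \cref{zero-dim-unif} to conclude each component is a singleton. No differences worth noting.
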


The proof of this result is the same as the proof of \cref{finite-unif}, except that \cref{zero-dim-unif} is replaced by \cref{zero-dim-pseff}.

\bibliographystyle{alpha}
\bibliography{references.bib}

\end{document}